\documentclass{amsart}

\usepackage{amssymb}
\usepackage{amsthm}
\usepackage{amsmath}

\usepackage[usenames]{color}

\usepackage{hyperref} 

\theoremstyle{plain} 
\newtheorem{proposition}{Proposition}[section]
\newtheorem{theorem}[proposition]{Theorem}
\newtheorem{lemma}[proposition]{Lemma}
\newtheorem{corollary}[proposition]{Corollary}
\theoremstyle{definition}

\newtheorem{definition}[proposition]{Definition}

\theoremstyle{remark}
\newtheorem{remark}[proposition]{Remark}

\numberwithin{equation}{section}

\DeclareMathOperator{\Gsf}{\mathsf{G}}
\DeclareMathOperator{\Psf}{\mathsf{P}}

\newcommand{\Asf}{\mathsf{A}}

\newcommand{\Nsf}{\mathsf{N}}
\newcommand{\Ksf}{\mathsf{K}}

\DeclareMathOperator{\Aut}{\mathsf{Aut}}
\DeclareMathOperator{\PGL}{\mathsf{PGL}}

\DeclareMathOperator{\Ac}{\mathcal{A}}
\DeclareMathOperator{\Cc}{\mathcal{C}}
\DeclareMathOperator{\Dc}{\mathcal{D}}
\DeclareMathOperator{\Fc}{\mathcal{F}}
\DeclareMathOperator{\Gc}{\mathcal{G}}
\DeclareMathOperator{\Qc}{\mathcal{Q}}
\DeclareMathOperator{\Oc}{\mathcal{O}}

\DeclareMathOperator{\Uc}{\mathcal{U}}

\DeclareMathOperator{\Pb}{\mathbb{P}}
\DeclareMathOperator{\Rb}{\mathbb{R}}
\DeclareMathOperator{\Nb}{\mathbb{N}}

\DeclareMathOperator{\mfg}{\mathfrak{g}}
\DeclareMathOperator{\mfa}{\mathfrak{a}}
\DeclareMathOperator{\mfp}{\mathfrak{p}}
\DeclareMathOperator{\mfk}{\mathfrak{k}}

\DeclareMathOperator{\dist}{d}
\DeclareMathOperator{\Gr}{Gr}

\newcommand{\opp}{\mathrm{i}}

\newcommand{\abs}[1]{\left|#1\right|}
\newcommand{\norm}[1]{\left\|#1\right\|}

\begin{document}

\title[Strict concavity of the growth indicator function]{Strict concavity of the growth indicator function for relatively Anosov groups}

\author[Kim]{Dongryul M. Kim}\email{dongryul.kim97@gmail.com}\address{Simons Laufer Mathematical Sciences Institute, USA}

\author[Oh]{Hee Oh}\email{hee.oh@yale.edu}\address{Department of Mathematics, Yale University, USA}

\author[Zimmer]{Andrew Zimmer}\email{amzimmer2@wisc.edu}\address{Department of Mathematics, University of Wisconsin-Madison, USA}

\date{\today}

\keywords{}
\subjclass[2010]{}

\begin{abstract}
Let $\Gamma$ be a discrete subgroup of a connected semisimple real algebraic group of higher rank. The growth indicator function $\psi_\Gamma$ records the directional exponential growth of the Cartan projections of elements of $\Gamma$ in the positive Weyl chamber $\mathfrak a^+$. We prove that if $\Gamma$ is a non-elementary relatively Borel Anosov group, then $\psi_\Gamma$ is strictly concave on non-collinear directions. We prove this by establishing the $\mathcal C^1$-smoothness of the Manhattan hypersurface, defined as the unit level set of the critical-exponent map $\phi\mapsto\delta^\phi(\Gamma)$. More generally, for a non-elementary $\theta$-transverse group, we prove local $\mathcal C^1$-regularity near every point of the $\theta$-Manhattan hypersurface that is positive on the $\theta$-limit cone and has a critical gap at infinity. In particular, the $\theta$-Manhattan hypersurface is globally $\mathcal C^1$ for relatively $\theta$-Anosov groups, and their $\theta$-growth indicator  functions are strictly concave on non-collinear directions.
\end{abstract}

\maketitle

\section{Introduction}

Let $\Gsf$ be a connected semisimple real algebraic group.
The main result of this paper is a sufficient condition for the $\theta$-growth indicator function of a Zariski dense $\theta$-transverse group of $\Gsf$ to be strictly concave in a neighborhood of a point. This result implies that the $\theta$-growth indicator function of every Zariski dense relatively $\theta$-Anosov group of $\Gsf$ is strictly concave. We establish our main result by proving local $\Cc^1$-regularity of the $\theta$-Manhattan hypersurface of a non-elementary $\theta$-transverse group at every point that is positive on the
$\theta$-limit cone and has a critical gap at infinity.
 We begin by introducing the notation needed to state this result precisely.

Let $\Asf<\Gsf$ be a maximal real split torus, and set
$\mfa:=\operatorname{Lie} \Asf$. Fix a positive Weyl chamber
$\mfa^+\subset\mfa$, and let $\Delta\subset\mfa^*$ denote the
corresponding set of simple restricted roots. We denote by
$$
\kappa:\Gsf\to\mfa^+
$$
the Cartan projection.

Fix a nonempty subset $\theta\subset\Delta$ which is invariant under the opposition involution of $\frak a$ (see \eqref{opp}).
Consider the partial Cartan subspace
$$
\mfa_\theta
:=
\bigcap_{\alpha\in\Delta\smallsetminus\theta}\ker\alpha.
$$
Let
$$
\pi_\theta:\mfa\to\mfa_\theta
$$
be the canonical projection invariant under every Weyl group element
that fixes $\mfa_\theta$ pointwise, and set
$$
\kappa_\theta:=\pi_\theta\circ\kappa:\Gsf\to\mfa_\theta.
$$

A subgroup $\Gamma<\Gsf$ is called \emph{$\theta$-discrete} if the
restriction
$$
\kappa_\theta|_\Gamma:\Gamma\to\mfa_\theta
$$
is proper. Equivalently, for every $T>0$, the set
$
\left\{
\gamma\in\Gamma:
\norm{\kappa_\theta(\gamma)}\leq T
\right\}
$
is finite, where $\norm{\cdot}$ is any norm on $\mfa_\theta$.  While every
discrete subgroup of $\Gsf$ is $\Delta$-discrete, the $\theta$-discreteness hypothesis is required to introduce the following $\theta$-growth indicator function.
The $\theta$-growth indicator function of a $\theta$-discrete subgroup
$\Gamma$, denoted by
$$
\psi_\Gamma^\theta:
\mfa_\theta\to[0,+\infty)\cup\{-\infty\},
$$
records the directional exponential growth of the set
$\kappa_\theta(\Gamma)$. More precisely, for
$u\in\mfa_\theta\smallsetminus\{0\}$, define
$$
\psi_\Gamma^\theta(u)
:=
\norm{u}
\inf_{\Cc\ni u}
\limsup_{T\to+\infty}
\frac{1}{T}
\log\#\left\{
\gamma\in\Gamma:
\kappa_\theta(\gamma)\in\Cc,\ 
\norm{\kappa_\theta(\gamma)}\leq T
\right\},
$$
where the infimum is taken over all open cones
$\Cc\subset\mfa_\theta$ containing $u$, and set
$$
\psi_\Gamma^\theta(0):=0.
$$
This definition is independent of the choice of norm. When
$\theta=\Delta$, the growth indicator function was introduced by Quint
\cite{Quint_divergence}; the definition for general $\theta$ was
introduced by the first two named authors and Wang \cite{KOW_PD}. In rank
one, it reduces to the usual critical exponent.

The growth indicator function is positively homogeneous, concave, and upper
semicontinuous. Its effective support is the $\theta$-limit cone
$$
\mathcal L_\theta(\Gamma)
:=
\left\{
u\in\mfa_\theta:
t_n\kappa_\theta(\gamma_n)\to u
\text{ for some }
\{\gamma_n\}\subset\Gamma
\text{ and }
t_n\searrow0
\right\}.
$$
More precisely,
$$
\left\{
u\in\mfa_\theta:
\psi_\Gamma^\theta(u)\geq0
\right\}
=
\mathcal L_\theta(\Gamma),
$$
and $\psi_\Gamma^\theta=-\infty$ outside
$\mathcal L_\theta(\Gamma)$.

The principal class of $\theta$-discrete subgroups considered in this
paper consists of transverse groups. Let $\Psf_\theta<\Gsf$ be the
parabolic subgroup associated with $\theta$, and let
$$
\Fc_\theta:=\Gsf/\Psf_\theta
$$
be the corresponding partial flag manifold. We denote by
$$
\Lambda_\theta(\Gamma)\subset\Fc_\theta
$$
the $\theta$-limit set of $\Gamma$. Throughout the paper, every discrete subgroup $\Gamma$ is assumed to be
\emph{non-elementary} in the sense that $\Lambda_\theta(\Gamma)$ is infinite.
A  discrete subgroup $\Gamma<\Gsf$ is called
\emph{$\theta$-transverse} if the following two conditions hold:
\begin{itemize}
\item for every $\alpha\in\theta$ and every sequence of distinct
elements $\{\gamma_n\}\subset\Gamma$,
$$
\alpha(\kappa(\gamma_n))\to+\infty;
$$
\item any two distinct points $x,y\in\Lambda_\theta(\Gamma)$ are
transverse; equivalently, the diagonal $\Gsf$-orbit
$$
\Gsf\cdot(x,y)\subset\Fc_\theta\times\Fc_\theta
$$
is open.
\end{itemize}
For such a group, the action of $\Gamma$ on
$\Lambda_\theta(\Gamma)$ is a non-elementary convergence action.
There are two important subclasses of transverse groups. A
$\theta$-transverse group is called
\emph{$\theta$-Anosov} if its action on
$\Lambda_\theta(\Gamma)$ is a uniform convergence action, and
\emph{relatively $\theta$-Anosov} if this action is a
geometrically finite convergence action. These classes generalize,
respectively, convex cocompact and geometrically finite subgroups of
rank-one Lie groups.

\subsection{Strict concavity of the growth indicator function}

The main purpose of this paper is to prove the strict concavity of the
growth indicator function for relatively Anosov groups. Since
$\psi_\Gamma^\theta$ is positively homogeneous, it is linear along
each ray. Thus the strongest natural form of
concavity one can expect is strict concavity between non-collinear
directions.
We say that $\psi_\Gamma^\theta$ is \emph{strictly concave} if, whenever
$v,w\in\mfa_\theta$ are non-collinear and both
$\psi_\Gamma^\theta(v)$ and $\psi_\Gamma^\theta(w)$ are finite,
 one has
$$
\psi_\Gamma^\theta(v+w)
>
\psi_\Gamma^\theta(v)+\psi_\Gamma^\theta(w).
$$

For $\theta$-Anosov groups, work of Quint
\cite{Quint_Schottky} and
Sambarino \cite{Sambarino_report} (see also \cite{samb_hyper}, \cite{PS_eigenvalues}) implies
that $\psi_\Gamma^\theta$ is strictly concave, differentiable on the
relative interior of $\mathcal L_\theta(\Gamma)$, and has infinite
slope at every nonzero point of its relative boundary.
Their arguments rely on thermodynamic formalism (e.g. \cite{BCLS_gafa}) and do not directly
extend to relatively $\theta$-Anosov groups.

For relatively $\theta$-Anosov groups, the results of Canary, Zhang,
and the third named author, together with Quint's duality, yield the analogous
differentiability and infinite-slope properties
\cite{CZZ2025}. Thus strict concavity was the remaining missing part
of this regularity picture.

Our first main theorem fills this gap.

\begin{theorem}[Strict concavity of the growth indicator function]
\label{cor:GI}
Let $\Gamma<\Gsf$ be a Zariski dense relatively
$\theta$-Anosov group. Then its $\theta$-growth indicator function
$\psi_\Gamma^\theta$ is strictly concave.
\end{theorem}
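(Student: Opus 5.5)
The plan is to deduce strict concavity of $\psi_\Gamma^\theta$ from $\Cc^1$-regularity of the $\theta$-Manhattan hypersurface
$$
\mathcal M:=\{\phi\in\mfa_\theta^*:\ \delta^\phi(\Gamma)=1\},
$$
taken over functionals $\phi$ that are positive on $\mathcal L_\theta(\Gamma)\smallsetminus\{0\}$, via Quint's duality. For relatively $\theta$-Anosov groups, every such $\phi$ has a critical gap at infinity: the critical exponents of the peripheral (parabolic) subgroups with respect to $\phi$ are strictly smaller than $\delta^\phi(\Gamma)$. We take this as the content of the local regularity result announced in the abstract. Applying that result at every point of $\mathcal M$ then shows that $\mathcal M$ is a $\Cc^1$ hypersurface.

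The duality step runs as follows. Quint's duality (in the $\theta$-version of \cite{KOW_PD}) states that $\delta^\phi(\Gamma)\le 1$ if and only if $\phi\ge\psi_\Gamma^\theta$, and that $\phi\in\mathcal M$ exactly when $\phi$ is tangent to $\psi_\Gamma^\theta$: $\phi\ge\psi_\Gamma^\theta$ with equality at some nonzero $u\in\mathcal L_\theta(\Gamma)$. So the closed convex set
$$
\mathcal D:=\{\phi:\ \delta^\phi(\Gamma)\le1\}=\{\phi:\ \phi\ge\psi^\theta_\Gamma \text{ on }\mfa_\theta\}
$$
is the polar-dual object of the hypograph cone of $\psi_\Gamma^\theta$, and $\mathcal M=\partial\mathcal D$.

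Now suppose strict concavity fails. Then there are non-collinear $v,w$ with finite values and $\psi_\Gamma^\theta(v+w)=\psi_\Gamma^\theta(v)+\psi_\Gamma^\theta(w)$. By concavity and homogeneity, $\psi_\Gamma^\theta$ is then linear on the two-dimensional cone $\Cc$ spanned by $v$ and $w$. Two cases arise.
\begin{itemize}
\item If $v+w$ is in the relative interior of $\mathcal L_\theta(\Gamma)$, take a supporting linear functional $\phi$ at $v+w$. By the differentiability result of \cite{CZZ2025} it is the unique one, and $\phi\in\mathcal M$.
\item If $v+w$ is on the relative boundary, we instead use the infinite-slope property of \cite{CZZ2025} to rule this out, or else reduce to the interior case by moving slightly inside the cone.
\end{itemize}
In the interior case, $\phi=\psi_\Gamma^\theta$ on all of $\Cc$, so $\phi$ is tangent along two independent directions $v$ and $w$. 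Equivalently, the face of $\mathcal D$ exposed by the directions in $\Cc$ contains $\phi$. Dually, the set of supporting hyperplanes to $\mathcal D$ at $\phi$ contains the normals $v$ and $w$, which are not proportional. So $\mathcal M=\partial\mathcal D$ has a corner at $\phi$, contradicting $\Cc^1$-smoothness. Here $\phi$ is the tangency functional and $u\in\{v,w\}$ plays the role of a normal to $\mathcal M$ at $\phi$: the gradient of $\phi\mapsto\delta^\phi$ at $\phi$ is proportional to the direction where $\phi=\psi$.

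Zariski density enters to guarantee that $\mathcal L_\theta(\Gamma)$ has nonempty interior, by Benoist. This lets the supporting functionals be taken positive on $\mathcal L_\theta(\Gamma)$, which is needed to apply the regularity theorem.

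The main obstacle is the regularity input itself: showing that $\phi\mapsto\delta^\phi(\Gamma)$ has a $\Cc^1$ level set. The natural route is a pressure or Patterson–Sullivan argument. The critical gap at infinity gives that $\Gamma$ is of divergence type with finite Bowen–Margulis–Sullivan measure for the $\phi$-Patterson–Sullivan system. Uniqueness of the conformal measure then identifies the derivative of $\delta$ in direction $\eta$ with $\eta$ evaluated on a single "$\phi$-average direction" $u_\phi$. That uniqueness is what gives a single normal, rather than a cone of normals, and is the delicate step.
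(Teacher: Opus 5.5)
Your architecture matches the paper's. You get $\Cc^1$-regularity of $\partial\Qc_\theta(\Gamma)$ at every point from Theorem~\ref{thm:transverse main}, with positivity from \cite{CZZ2025} and the critical gap from \cite{Wen2026}, and then apply Quint's duality. Your argument when $v+w\in\operatorname{int}\mathcal L_\theta(\Gamma)$ is also correct. A supergradient $\phi$ at $v+w$ equals $\psi_\Gamma^\theta$ at $v$ and at $w$. Hence $\phi'\mapsto\phi'(v)$ and $\phi'\mapsto\phi'(w)$ are both minimized over $\Qc_\theta(\Gamma)$ at $\phi$, which gives two distinct supporting hyperplanes there. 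You do not need the uniqueness from \cite{CZZ2025} for this.

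\textbf{The gap is the case $v+w\in\partial\mathcal L_\theta(\Gamma)$.} Then $v$, $w$, $v+w$ lie in a common proper face of the cone. By the infinite-slope property, no linear form is tangent along any of these rays, so $\Cc^1$-regularity of $\partial\Qc_\theta(\Gamma)$ says nothing about them. Neither of your patches works:
\begin{enumerate}
\item Infinite slope only concerns directions pointing into $\operatorname{int}\mathcal L_\theta(\Gamma)$ and places no constraint along the face.
\item Pushing $v,w$ inward destroys the equality you are trying to contradict, and strict inequalities in the interior do not survive the limit back to the face.
\end{enumerate}
In fact, all the regularity you invoke is compatible with $\psi$ being affine on a face. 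Take the homogenization $c\,f(a/c,b/c)$ of $f(a,b)=\sqrt{b}-a^2b$ on $\{0\le a\le 1,\ 0\le b\le 1/64\}$. It is concave, and smooth and strictly concave in the interior. It has infinite slope at the face $\{b=0\}$, yet vanishes identically on that face. Each of its tangent forms touches a single ray, so its dual set has $\Cc^1$ boundary. So as written you prove the inequality only when $v+w$ is interior, and the boundary case needs a separate input. The paper's one-line appeal to Quint's duality does not isolate this case either.

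Two smaller points.

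First, you need the tangent form to be positive on $\mathcal L_\theta(\Gamma)\smallsetminus\{0\}$ in order to apply the local theorem. This does not follow from the limit cone having nonempty interior \cite{Benoist_properties}. The paper takes it from \cite{CZZ2025}, which gives positivity at every point of $\partial\Qc_\theta(\Gamma)$ for relatively $\theta$-Anosov groups.

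Second, your sketch of the regularity mechanism is off. Uniqueness, ergodicity and Birkhoff for the measure at a single $\phi$ only trap the typical Cartan direction between the one-sided derivatives of the graphing function $\Phi$, so they cannot exclude a corner. The paper identifies $d\Phi$ with ratios of BMS averages of the Cartan displacement observable only at points of differentiability (Proposition~\ref{prop:derivative}, Corollary~\ref{cor:limit of ratios}). The decisive input is continuity of $\phi\mapsto\int\hat f_\eta\,dm_\phi$ near the given form. This comes from stability of the critical gap and uniform tightness of the BMS measures (Theorem~\ref{thm:critical gap is stable}, Lemma~\ref{after}). After that, a convex function whose a.e.\ gradient agrees with a continuous map is $\Cc^1$.
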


\begin{remark}
Reyes--Wang \cite{RW2026} have independently established
Theorem~\ref{cor:GI}.
\end{remark}

We record two closely related consequences of
Theorem~\ref{cor:GI}. A linear form
$\phi\in\mfa_\theta^*$ is said to be \emph{tangent} to
$\psi_\Gamma^\theta$ at a nonzero vector
$u\in\frak a_\theta$ if
$$
\phi\geq\psi_\Gamma^\theta
\quad\text{and}\quad
\phi(u)=\psi_\Gamma^\theta(u).
$$
Since both $\phi$ and $\psi_\Gamma^\theta$ are positively
homogeneous, tangency at $u$ depends only on the ray
$\Rb_{>0}u$, which is necessarily in $\mathcal L_\theta(\Gamma)\smallsetminus\{0\}$.

\begin{corollary}[Tangent directions]
\label{cor:unique tangent direction}
Let $\Gamma<\Gsf$ be Zariski dense relatively
$\theta$-Anosov group. For every $\phi\in\mfa_\theta^*$, there is at
most one ray in
$\mathcal L_\theta(\Gamma)\smallsetminus\{0\}$ along which $\phi$ is
tangent to $\psi_\Gamma^\theta$.
In addition,  the tangent linear
forms in $\mfa_\theta^*$ are in one-to-one correspondence with the
rays contained in
$\operatorname{int}\mathcal L_\theta(\Gamma)$.
\end{corollary}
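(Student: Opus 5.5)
The plan is to deduce Corollary~\ref{cor:unique tangent direction} from Theorem~\ref{cor:GI}, combined with the known regularity of $\psi_\Gamma^\theta$ for relatively $\theta$-Anosov groups recalled above: differentiability on the relative interior of $\mathcal L_\theta(\Gamma)$ and infinite slope at nonzero boundary points \cite{CZZ2025}. Zariski density makes $\mathcal L_\theta(\Gamma)$ have nonempty interior, so relative interior means interior.

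\emph{Uniqueness of the tangent ray.} Suppose $\phi$ is tangent to $\psi_\Gamma^\theta$ at two non-collinear vectors $v,w\in\mathcal L_\theta(\Gamma)\smallsetminus\{0\}$. Both values $\psi_\Gamma^\theta(v)=\phi(v)$ and $\psi_\Gamma^\theta(w)=\phi(w)$ are finite, so Theorem~\ref{cor:GI} applies and gives
$$\psi_\Gamma^\theta(v+w)>\psi_\Gamma^\theta(v)+\psi_\Gamma^\theta(w)=\phi(v)+\phi(w)=\phi(v+w).$$
This contradicts $\phi\geq\psi_\Gamma^\theta$. Hence any two tangency vectors of $\phi$ are collinear. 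Since they lie in the pointed cone $\mathcal L_\theta(\Gamma)$ (a $\theta$-transverse limit cone contains no line), they are positively collinear, so they span the same ray.

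\emph{Correspondence with interior rays.} First, every tangency ray is interior. If $\phi$ were tangent at a nonzero boundary point $u$, take $u'$ in the interior and consider the one-sided derivative of $t\mapsto\psi_\Gamma^\theta(u+t(u'-u))$ at $t=0^+$. By the infinite-slope property this derivative is $+\infty$. But $\phi-\psi_\Gamma^\theta\geq0$ vanishes at $t=0$, so the same derivative is at most $\phi(u'-u)<\infty$, a contradiction.

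Second, every interior ray carries a tangent form. For $u\in\operatorname{int}\mathcal L_\theta(\Gamma)$, concavity and finiteness of $\psi_\Gamma^\theta$ near $u$ give a supporting linear form by Hahn--Banach. Since $\psi_\Gamma^\theta$ is differentiable at $u$, this form is unique and equals $d\psi_\Gamma^\theta(u)$. Positive homogeneity (Euler's relation) gives $d\psi_\Gamma^\theta(u)(u)=\psi_\Gamma^\theta(u)$. Concavity, together with $\psi_\Gamma^\theta=-\infty$ off $\mathcal L_\theta(\Gamma)$, then gives $d\psi_\Gamma^\theta(u)\geq\psi_\Gamma^\theta$ globally.

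Define the map from interior rays to tangent forms by $\Rb_{>0}u\mapsto d\psi_\Gamma^\theta(u)$. It is well defined because the differential is $0$-homogeneous. It is surjective by the first point of this step. It is injective by the uniqueness step: one form cannot be tangent along two distinct rays.

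The main obstacle is the boundary exclusion, and it rests entirely on the cited infinite-slope property from \cite{CZZ2025}. I would check that this statement is formulated as a directional derivative from the boundary into the interior, as used above. Everything else is formal convex analysis once Theorem~\ref{cor:GI} is available.
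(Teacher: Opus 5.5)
Your proposal is correct and follows the same route as the paper. Uniqueness of the tangent ray comes from the strict concavity in Theorem~\ref{cor:GI}, nonempty interior of $\mathcal L_\theta(\Gamma)$ comes from Zariski density via Benoist, each interior ray gets a unique tangent form $d\psi_\Gamma^\theta|_u$ from the differentiability in \cite{CZZ2025}, and boundary tangency rays are excluded by the infinite-slope property. You additionally make explicit the convex-analysis steps the paper leaves implicit: the superdifferential being a singleton at a point of differentiability, Euler's relation, pointedness of the cone, and the one-sided derivative bound by $\phi(u'-u)$.
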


Indeed, strict concavity gives the uniqueness of the tangent ray.
When $\Gamma$ is Zariski dense, the limit cone
$\mathcal L_\theta(\Gamma)$ has nonempty interior in $\mfa_\theta$ \cite{Benoist_properties}.
The differentiability result of \cite{CZZ2025} associates a unique
tangent linear form to every ray in
$\operatorname{int}\mathcal L_\theta(\Gamma)$, while the
infinite slope result  ensures that the tangent ray of every tangent
linear form lies in this interior. Thus the correspondence is
bijective. The correspondence is given by
$$
\Rb_{>0}u
\mapsto
d\psi_\Gamma^\theta|_u.
$$

For a linear form $\phi\in\mfa_\theta^*$ that is positive on
$\mathcal L_\theta(\Gamma)\smallsetminus\{0\}$, this uniqueness has a
natural normalized formulation. The section
$$
S_\phi
:=
\left\{
u\in\mathcal L_\theta(\Gamma):
\phi(u)=1
\right\}
$$
is compact and convex. Since $\psi_\Gamma^\theta$ is upper
semicontinuous, it attains its maximum on $S_\phi$. Moreover,  strict concavity
implies that this maximum is attained at a unique point. We denote it
by $u_\phi$; thus
$$
\psi_\Gamma^\theta(u_\phi)
=
\max_{\substack{
u\in\mathcal L_\theta(\Gamma),
\phi(u)=1
}}
\psi_\Gamma^\theta(u).
$$ Thus $u_\phi$ is the
unique dominant asymptotic Cartan direction for the weighted counting
problem determined by $\phi$.

The uniqueness in Corollary~\ref{cor:unique tangent direction} was
needed in the local mixing result \cite{KOP_localmixing} to identify a direction arising from
thermodynamic formalism with the direction characterized by tangency
to the growth indicator function. This application was one of the main
motivations for the present work, together with the intrinsic
geometric question of whether growth indicator functions are strictly
concave.

\subsection{The Manhattan hypersurface}
Our approach to Theorem~\ref{cor:GI} is through a convex set dual to
the growth indicator function. For $\phi\in\mfa_\theta^*$, define the extended
critical exponent
$$
\delta^\phi(\Gamma)
:=
\inf\left\{
s>0:
\sum_{\gamma\in\Gamma}
e^{-s\phi(\kappa_\theta(\gamma))}
<+\infty
\right\}
\in[0,+\infty],
$$
where the infimum of the empty set is understood to be $+\infty$.

Whenever $\phi\circ\kappa_\theta$ is bounded below and proper on
$\Gamma$, this agrees with the counting exponent
$$
\delta^\phi(\Gamma)
=
\limsup_{T\to+\infty}
\frac{1}{T}
\log\#\left\{
\gamma\in\Gamma:
\phi(\kappa_\theta(\gamma))\leq T
\right\}.
$$
In particular, this applies when $\phi$ is positive on
$\mathcal L_\theta(\Gamma)\smallsetminus\{0\}$.

Following Burger \cite{Burger_Manhattan} and Quint
\cite{Quint_divergence,Quint_Schottky}, define
$$
\Qc_\theta(\Gamma)
:=
\left\{
\phi\in\mfa_\theta^*:
\delta^\phi(\Gamma)\leq1
\right\}.
$$
Its boundary
$$
\partial\Qc_\theta(\Gamma)
=
\left\{
\phi\in\mfa_\theta^*:
\delta^\phi(\Gamma)=1
\right\}
$$
is called the \emph{$\theta$-Manhattan hypersurface} of $\Gamma$.

Quint's duality identifies $\Qc_\theta(\Gamma)$ with the set of linear
forms dominating the growth indicator function:
$$
\Qc_\theta(\Gamma)
=
\left\{
\phi\in\mfa_\theta^*:
\phi \geq\psi_\Gamma^\theta
\right\}.
$$
Consequently,
$$
\partial\Qc_\theta(\Gamma)
=
\left\{
\phi\in\mfa_\theta^*: 
\text{$\phi$ is tangent to $\psi_\Gamma^\theta$}
\right\}.
$$

If $\phi$ is positive on
$\mathcal L_\theta(\Gamma)\smallsetminus\{0\}$, the same duality gives
the variational formula
$$
\delta^\phi(\Gamma)
=
\max_{
u\in\mathcal L_\theta(\Gamma), \phi(u)=1}
\psi_\Gamma^\theta(u).
$$
Thus the geometry of the Manhattan hypersurface is dual to the
geometry of the growth indicator function.

Our second main theorem establishes the regularity of this dual
hypersurface.

\begin{theorem}[Smoothness of the Manhattan hypersurface]
\label{thm:Manhattan rel Anosov}
Let $\Gamma<\Gsf$ be a non-elementary relatively
$\theta$-Anosov group. Then its $\theta$-Manhattan hypersurface
$\partial\Qc_\theta(\Gamma)$ is a $\Cc^1$ hypersurface.
\end{theorem}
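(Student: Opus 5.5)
The plan is to reduce Theorem~\ref{thm:Manhattan rel Anosov} to a local statement and then verify its hypotheses for relatively $\theta$-Anosov groups. Since $\Qc_\theta(\Gamma)$ is a closed convex subset of $\mfa_\theta^*$, its boundary is a $\Cc^1$ hypersurface near $\phi_0\in\partial\Qc_\theta(\Gamma)$ if and only if $\Qc_\theta(\Gamma)$ has a unique supporting hyperplane at every boundary point near $\phi_0$. I will prove this for every non-elementary $\theta$-transverse $\Gamma$ and every $\phi_0\in\partial\Qc_\theta(\Gamma)$ satisfying two conditions:
\begin{itemize}
\item[(a)] $\phi_0$ is positive on $\mathcal L_\theta(\Gamma)\smallsetminus\{0\}$;
\item[(b)] $\phi_0$ has a critical gap at infinity.
\end{itemize}
Condition (b) means that the $\phi_0$-critical exponent of every "cusp" (peripheral/parabolic stabilizer) is strictly smaller than $\delta^{\phi_0}(\Gamma)=1$. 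Both conditions are open, so they persist near $\phi_0$. It will then suffice to check (a) and (b) at every point of the Manhattan hypersurface of a relatively $\theta$-Anosov group.

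\textbf{Step 1: verifying (a) and (b) in the relatively Anosov case.} For (a), I take $\phi\in\partial\Qc_\theta(\Gamma)$ and suppose $\phi(u)=0$ for some $u\in\mathcal L_\theta(\Gamma)\smallsetminus\{0\}$. By Quint's duality, $\phi\geq\psi_\Gamma^\theta\geq 0$ on $\mathcal L_\theta(\Gamma)$, so $\psi_\Gamma^\theta(u)=0$. Since $\psi_\Gamma^\theta$ is positive on the interior of the limit cone, $u$ must lie in the relative boundary. There the infinite-slope property from \cite{CZZ2025} is incompatible with the linear majorant $\phi$ vanishing at $u$.

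For (b), I use the structure of relatively $\theta$-Anosov groups: each peripheral subgroup is virtually nilpotent, and its $\kappa_\theta$-growth is controlled along a single cusp direction. I then invoke the known strict inequality between the peripheral exponent and $\delta^\phi(\Gamma)$ for such groups \cite{CZZ2025}. I expect this to be a citation plus a short argument.

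\textbf{Step 2: identifying supporting hyperplanes.} By duality, the normals to supporting hyperplanes at $\phi$ are exactly the rays $\Rb_{>0}u\subset\mathcal L_\theta(\Gamma)$ with $\phi(u)=\psi_\Gamma^\theta(u)$. Equivalently, they are the subgradients of the convex, degree $-1$ homogeneous function $\phi\mapsto\delta^\phi(\Gamma)$, which is finite near $\phi_0$ by (a).

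Uniqueness of the supporting hyperplane therefore amounts to differentiability of $\eta\mapsto\delta^{\phi+\eta}(\Gamma)$ at $\eta=0$. The one-sided directional derivatives exist by convexity, so I need to show that the left and right derivatives coincide in every direction $\eta$.

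\textbf{Step 3: the dynamical input, which is the main obstacle.} The critical gap at infinity implies that $\Gamma$ is of $\phi$-divergence type. It also implies that the associated $\phi$-Bowen--Margulis--Sullivan measure on the $\phi$-flow space of the transverse group is finite, by the usual Dal'bo--Otal--Peign\'e style cusp estimate. Divergence then gives uniqueness and ergodicity of the $\phi$-Patterson--Sullivan measure, and hence a unique ergodic finite flow-invariant measure $m_\phi$.

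I would then show that both one-sided derivatives of $\delta^{\phi+t\eta}$ at $t=0$ equal
$$-\,\delta^\phi(\Gamma)\,\frac{\eta(u_\phi)}{\phi(u_\phi)},$$
where $u_\phi$ is the mean $\kappa_\theta$-drift (the "Cartan cocycle average") of $m_\phi$, computed via Birkhoff's theorem. My first attempt would run as follows.
\begin{itemize}
\item Any tangent direction $u$ of $\phi$ is realized, through a Quint-type counting along cones around $u$, by a sequence of PS-type measures.
\item After normalization, these measures converge to a $\phi$-conformal measure of dimension $\delta^\phi(\Gamma)$.
\item By uniqueness this limit is the PS measure, so its drift is forced to equal $u_\phi$.
\end{itemize}
The delicate point is controlling escape of mass into cusps along these approximations, and this is exactly where the critical gap (b) is used. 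Once all tangent rays coincide with $\Rb_{>0}u_\phi$, the supporting hyperplane at $\phi$ is unique for all $\phi$ near $\phi_0$, giving local $\Cc^1$ regularity. Combined with Step 1, this yields the global statement.
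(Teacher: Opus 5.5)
Your overall framework matches the paper's: you reduce to a local statement for transverse groups (Theorem~\ref{thm:transverse main}), check positivity and the critical gap for relatively $\theta$-Anosov groups by citation, and recast $\Cc^1$-regularity as uniqueness of the subgradient of $\phi\mapsto\delta^\phi(\Gamma)$, which is equivalent to differentiability of the paper's graph function $\Phi$. One correction: the critical gap at infinity is Wen's flow-space condition $\delta^\phi_\infty(\Gamma)<\delta^\phi(\Gamma)$, defined through excursions outside compact subsets of $\Gc$. It is not a condition on cusp subgroups, although for relatively Anosov groups Wen proves it holds, so citing him is fine.

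The genuine gap is in Step 3, in two places.

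\textbf{The drift is not defined.} The \emph{mean $\kappa_\theta$-drift of $m_\phi$ computed via Birkhoff's theorem} does not yet exist. $\kappa_\theta$ and $B_\theta$ are cocycles, whereas Birkhoff needs a bounded function on the quotient flow space whose orbit integrals recover $\kappa_\theta(\rho(\gamma))$ up to bounded error along segments from $K$ to $\gamma K$. Producing such a function is the paper's main new construction, Proposition~\ref{prop:constructing functions}. There one builds $\mathbf Q$ from Poincar\'e-type series so that $\mathbf Q(\eta v)=\mathbf Q(v)-B_\theta(\rho(\eta),\xi(v^+))$, and then sets $\mathbf f=\mathbf Q-\mathbf Q\circ g_1$.

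\textbf{The tangent-ray argument fails.} This is the more serious problem. By divergence type, any $\delta^\phi(\Gamma)$-dimensional $\phi$-Patterson--Sullivan measure you build from a cone around a tangent direction $u$ is simply $\mu_\phi$. A weak limit on the boundary forgets the Cartan directions of the orbit points that produced it. So \emph{its drift is forced to equal $u_\phi$} is true by definition and says nothing about $u$.

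Excluding a second tangent ray pointwise would require an exponential large-deviation bound. Tangency at $u$ means that, for every open cone $\Cc\ni u$, the sum of $e^{-\phi(\kappa_\theta(\gamma))}$ over $\gamma$ with $\kappa_\theta(\gamma)\in\Cc$ and $\phi(\kappa_\theta(\gamma))\in[n,n+1)$ is $e^{-o(n)}$ along a subsequence. By the shadow lemma and uniform multiplicity, the union of these shadows then has $\mu_\phi$-mass $e^{-o(n)}$. That is fully compatible with $\mu_\phi$-a.e.\ conical sequences having direction $u_\phi\notin\overline{\Cc}$. Likewise, the Borel--Cantelli argument of Proposition~\ref{prop:derivative}, run at a kink of $\Phi$, only places the a.e.\ limiting ratio between the two one-sided derivatives.

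\textbf{What the paper does instead.} It never proves uniqueness of the tangent ray pointwise.
\begin{enumerate}
\item At points where the convex function $\Phi$ is differentiable (almost every point), Proposition~\ref{prop:derivative} and Corollary~\ref{cor:limit of ratios} identify $\left.\frac{d}{dt}\right|_{t=0}\Phi(u+t\psi)$ with $-\frac{\int\hat f_\psi\,dm_\phi}{\int\hat f_{\psi_0}\,dm_\phi}$.
\item Theorem~\ref{thm:critical gap is stable} shows the critical gap persists on a neighborhood of $\phi_0$.
\item Via the uniform tightness of Lemma~\ref{after}, $\psi\mapsto\int f\,dm_\psi$ is continuous on that neighborhood for every bounded continuous $f$.
\end{enumerate}
Hence the almost-everywhere derivative of $\Phi$ agrees with a continuous map, and a convex function with this property is $\Cc^1$. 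The missing idea in your plan is this continuity in the linear form, which uses the critical gap uniformly over a neighborhood rather than to control escape of mass for a single $\phi$.
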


\begin{remark}
Reyes--Wang \cite{RW2026} have independently established
Theorem~\ref{thm:Manhattan rel Anosov}.
\end{remark}

Theorem~\ref{thm:Manhattan rel Anosov} implies
Theorem~\ref{cor:GI} by Quint's duality \cite{Quint_Schottky}. 

\subsection{The general transverse case}

Theorem~\ref{thm:Manhattan rel Anosov} is a consequence of a local
regularity theorem for general transverse groups. The additional
hypothesis is formulated in terms of growth at infinity.

Motivated by the work of Pit--Schapira \cite{PS_finiteness} in
negative curvature, Wen \cite{Wen2026} introduced the
\emph{critical-gap-at-infinity} condition, also called the
\emph{strongly positively recurrent} condition, for transverse
groups. We recall its precise definition in
Section~\ref{subsec:critical gap}. Roughly speaking, this condition
requires that the exponential growth arising outside every compact
part of the associated flow space be strictly smaller than the full
exponential growth. Wen proved that a critical gap at infinity implies
 finiteness of the associated
Bowen--Margulis--Sullivan measure.

Our most general result is the following.

\begin{theorem}
\label{thm:transverse main}
Let $\Gamma<\Gsf$ be a non-elementary
$\theta$-transverse group. Suppose that
$$
\phi\in\partial\Qc_\theta(\Gamma)
$$
is positive on
$\mathcal L_\theta(\Gamma)\smallsetminus\{0\}$
and has a critical gap at infinity. Then
$\partial\Qc_\theta(\Gamma)$ is a $\Cc^1$ hypersurface in a
neighborhood of $\phi$.

If, in addition, $\Gamma$ is Zariski dense, then
$\partial\Qc_\theta(\Gamma)$ is strictly convex in a neighborhood of
$\phi$.
\end{theorem}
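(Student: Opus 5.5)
We will prove Theorem~\ref{thm:transverse main} by showing that near $\phi$ the function $\psi\mapsto\delta^\psi(\Gamma)$ is $\Cc^1$ with nonvanishing derivative, and then applying the implicit function theorem. The surface $\partial\Qc_\theta(\Gamma)$ is the level set $\{\delta^\psi=1\}$, so this gives $\Cc^1$ regularity. Since $\phi>0$ on the compact set $\mathcal L_\theta(\Gamma)\cap\{\|u\|=1\}$, every $\psi$ in a small neighborhood $U$ of $\phi$ is also positive on $\mathcal L_\theta(\Gamma)\smallsetminus\{0\}$. For such $\psi$ the critical exponent $\delta^\psi(\Gamma)$ is finite and positive. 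It is also homogeneous of degree $-1$, so $\delta^{t\phi}=t^{-1}$, which gives a nonzero derivative in the radial direction. It therefore suffices to prove that $\psi\mapsto\delta^\psi(\Gamma)$ is differentiable on a neighborhood of $\phi$ with continuous derivative. Because $\delta$ is convex in $\psi$ (Hölder's inequality applied to Poincaré series, or equivalently Quint's duality expressing $1/\delta^\psi$ as an infimum), differentiability at every point of $U$ automatically implies $\Cc^1$.

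Differentiability of a convex function at $\psi$ is equivalent to uniqueness of its subgradient. By the variational formula $\delta^\psi=\max_{\psi(u)=1}\psi_\Gamma^\theta(u)$, subgradients of $\delta$ at $\psi$ correspond to maximizers $u$ of $\psi^\theta_\Gamma$ on the section $S_\psi$. Concretely, $d\delta^\psi=-\delta^\psi u_\psi$, viewed as a vector in $\mfa_\theta$. So the first key step is to show that the maximizer $u_\psi$ is unique. For this we use the critical gap at infinity, which we first need to know persists for all $\psi\in U$. That should follow from the definition in Section~\ref{subsec:critical gap}: the exponent at infinity is presumably upper semicontinuous in $\psi$ (it is an exponent of a restricted series), while $\delta^\psi$ is continuous on $U$. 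Given the gap, Wen's theorem yields a $\psi$-Patterson--Sullivan measure $\nu_\psi$ on $\Lambda_\theta(\Gamma)$ of dimension $\delta^\psi$, together with a finite Bowen--Margulis--Sullivan measure $m_\psi$ on the flow space. The flow is ergodic and in fact mixing/conservative, since a critical gap forces divergence of the Poincaré series.

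The second key step identifies the direction. Following the Anosov case, consider the $\mfa_\theta$-valued cocycle (the Busemann/translation cocycle) integrated against $m_\psi$. Birkhoff's ergodic theorem gives an almost-sure asymptotic direction $u$ of Cartan projections along $m_\psi$-generic orbits. Because $m_\psi$ is the unique measure of maximal $\psi$-entropy, any maximizer $u'$ of $\psi_\Gamma^\theta$ on $S_\psi$ would produce, via a directional Patterson--Sullivan construction in a cone around $u'$, a $\psi$-conformal measure of dimension $\delta^\psi$. Uniqueness of such conformal measures (again using divergence and finiteness from the gap) forces $u'=u$. This is the main obstacle: the directional construction near $u'$ must not escape to the cusp or infinity. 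The critical gap is exactly the tool that controls mass at infinity, via a Pit--Schapira type tightness argument.

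For strict convexity under Zariski density, we argue via duality. A failure of strict convexity of $\partial\Qc_\theta$ near $\phi$ means a segment in $\partial\Qc_\theta$ on which the tangent ray $u$ is constant. Equivalently, $\psi_1-\psi_2$ vanishes on $u$ for distinct $\psi_1,\psi_2$, both with $m_{\psi_i}$ having the same direction. Comparing conformal densities, we would then show that $\psi_1$ and $\psi_2$ have cohomologous Busemann cocycles. This in turn forces the length spectra to satisfy $(\psi_1-\psi_2)(\lambda_\theta(\gamma))=0$ for all $\gamma$ (a Livšic-type statement on periodic orbits, valid thanks to finiteness and mixing of $m_\psi$). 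That contradicts Benoist's theorem that the Jordan projections of a Zariski dense group span $\mfa_\theta$.
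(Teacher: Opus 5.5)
\textbf{There is a genuine gap in the $\Cc^1$ part: uniqueness of the maximizer at every $\psi$ near $\phi$ is never established.} Your reduction itself is sound. Near $\phi$ the map $\psi\mapsto\delta^\psi(\Gamma)$ is convex with radial derivative $-1$. A convex function differentiable on an open set is automatically $\Cc^1$. By duality, differentiability at $\psi$ amounts to uniqueness of the maximizer $u_\psi$ of $\psi_\Gamma^\theta$ on $S_\psi$. Persistence of the critical gap near $\phi$ is also fine; the paper proves it by comparing $\psi$ with $(1\pm\epsilon)\phi$ on the limit cone.

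The failure is in the step meant to give uniqueness at \emph{every} $\psi$, for three reasons.
\begin{enumerate}
\item There is no variational principle or ``unique measure of maximal $\psi$-entropy'' available here. The measure $m_\psi$ lives on the Hilbert geodesic flow, and defining a $\psi$-entropy would need exactly the reparametrization the paper notes is unavailable for general transverse groups.
\item The directional Patterson--Sullivan construction does not supply the missing step. A Poincar\'e series restricted to a cone around $u'$ is not $\Gamma$-invariant, so its weak limits need not be conformal. Even if you obtained a $\psi$-conformal measure of dimension $\delta^\psi$, uniqueness would only identify it with $\nu_\psi$, which retains no memory of the cone, so nothing forces $u'=u$. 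For a real contradiction you would need $\nu_\psi$ to charge points approached conically by elements whose Cartan projections lie near $u'$. That requires divergence of the cone-restricted series at $\delta^\psi$ together with a converse Borel--Cantelli argument. A maximizer only tells you the restricted \emph{exponent} equals $\delta^\psi$. If that series converges at $\delta^\psi$, the shadow lemma shows $\nu_\psi$-typical conical sequences simply avoid the cone, and there is no contradiction.
\item You cannot integrate a cocycle against $m_\psi$. Birkhoff's theorem needs a bounded function on $\Gamma_0\backslash\Gc$ whose orbit integrals recover $\kappa_\theta$ along recurrent segments. Building it is the paper's main new construction (Proposition~\ref{prop:constructing functions}).
\end{enumerate}

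\textbf{The paper avoids pointwise uniqueness altogether.} It writes $\partial\Qc_\theta(\Gamma)$ as the graph of a convex $\Phi$ and works only at the almost-every points where $\Phi$ is already differentiable. The argument runs as follows.
\begin{enumerate}
\item At such a point, comparing shadow measures using the perturbed forms $\phi_{\pm s}$ and applying Borel--Cantelli (Proposition~\ref{prop:derivative}) shows that $\mu_\phi$-typical conical sequences have asymptotic direction determined by $d\Phi$.
\item Birkhoff's theorem for the Cartan displacement observable identifies this direction with a ratio of $m_\phi$-averages (Corollary~\ref{cor:limit of ratios}).
\item The stable critical gap gives uniform tightness of the finite measures $m_\psi$, hence continuity of these averages (Theorem~\ref{thm:critical gap is stable}).
\item A convex function whose almost-everywhere gradient coincides with a continuous map is $\Cc^1$.
\end{enumerate}
So the critical gap is used for finiteness and tightness of the BMS measures, not to control a directional construction.

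\textbf{Strict convexity.} Your idea of forcing equal length spectra and contradicting Benoist is in the right spirit. However, the operative input is not that the endpoints share a tangent direction. It is that an interior point $\phi_t$ of a boundary segment is of divergence type (Wen's theorem, via stability of the gap) while equality holds in the H\"older convexity inequality. This combination is exactly what \cite[Corollary 13.2]{CZZ2024} converts into $\delta^{\phi_t}<1$ for Zariski dense $\Gamma$.
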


Canary, Zhang, and the third named author proved that, if $\Gamma$ is relatively
$\theta$-Anosov, then every
$
\phi\in\partial\Qc_\theta(\Gamma)
$
is positive on
$\mathcal L_\theta(\Gamma)\smallsetminus\{0\}$
\cite{CZZ2025}. Wen \cite{Wen2026} further proved that every such
$\phi$ has a critical gap at infinity. Therefore, for relatively
$\theta$-Anosov groups, Theorem~\ref{thm:transverse main} applies at every point of the
Manhattan hypersurface and yields
Theorem~\ref{thm:Manhattan rel Anosov}. 

\subsection{Comments on the proof}

The proof of Theorem~\ref{thm:transverse main} is inspired by the work
of Cantrell--Tanaka \cite{CT_Manhattan} on Manhattan curves associated
with pairs of premetrics on hyperbolic groups. Their argument gives
the $\Cc^1$-regularity of the Manhattan hypersurface for Anosov groups
and relies on the uniform hyperbolicity of the underlying group.

For relatively Anosov groups, this strategy can be adapted using the
reparametrizations of flow spaces developed in
\cite{ZZ_relatively,KO_entropy}. For general transverse groups,
however, such a reparametrization is unavailable because there is no
sufficiently controlled global structure outside compact subsets of
the flow space.

To overcome this, we construct  a bounded continuous map
$$
\mathbf f:\Gc\to\mfa_\theta,
$$
which we call the \emph{Cartan displacement observable}. Its orbit
integrals coarsely recover the partial Cartan displacement of
recurrent orbit segments. More precisely, for orbit segments whose
initial and terminal points lie over a fixed compact part of the
quotient,
$$
\int_0^T\mathbf f(g_tv)\,dt
$$
differs from the corresponding partial Cartan projection by a
uniformly bounded amount. The construction of the above observable is the main novelty of this paper.

This observable allows us to identify the derivative of a local
convex graph parametrizing the Manhattan hypersurface, at every point
where the derivative exists, with ratios of averages against the
corresponding Bowen--Margulis--Sullivan measure. As we will show, the
critical-gap-at-infinity condition is stable under small perturbations
of the linear form and gives uniform control of the mass of these
measures outside compact subsets. We use this to prove that the
relevant averages depend continuously on the linear form.

The local graphing function of the Manhattan hypersurface is convex and hence differentiable almost
everywhere. Its derivative agrees almost everywhere with a continuous
function obtained from the Bowen--Margulis--Sullivan averages. A
standard convexity argument then shows that the graphing function is
everywhere differentiable with continuous derivative, proving the
$\Cc^1$-regularity of the Manhattan hypersurface. In the Zariski dense
case, the local strict-convexity assertion follows from the known
rigidity of equality in the convexity inequality for critical
exponents.

\subsection{Notation}

Given two functions $f,g:X\to(0,+\infty)$, we write $f\ll g$, or equivalently $g\gg f$, if there exists a constant $C>0$ such that
$$
f(x)\leq Cg(x)
$$
for every $x\in X$. If $f\ll g$ and $g\ll f$, we write $f\asymp g$.

\subsection*{Acknowledgements} 
Oh is partially supported by National Science Foundation grant DMS-2450703. Zimmer was partially supported by a Sloan Research Fellowship and National Science Foundation grants DMS-2105580 and
DMS-2452068.  This material is based upon work supported by the National Science Foundation under Grant No. DMS-2424139, while the authors were in residence at the Simons Laufer Mathematical Sciences Institute in Berkeley, California, during the Spring 2026 semester.

\section{Semisimple algebraic groups} 
Throughout the paper, $\Gsf$ is a connected semisimple real algebraic group. Let $\mfg = \mfp + \mfk$ be a fixed Cartan decomposition of its Lie algebra where  $\mfk$ and $\mfp$ are $+1$ and $-1$ eigenspaces of a fixed Cartan involution of $\mfg$ respectively.
Let $\mfa \subset \mfp$ be a maximal abelian subspace, and fix a positive Weyl chamber $\mfa^+ \subset \mfa$. We denote by $\Sigma \subset \mfa^*$ the set of restricted roots and by $\Delta \subset \mfa^*$ the corresponding set of simple restricted roots corresponding to the choice of $\mfa^+$. Then we have the decomposition
$$
\mfg = \mfg_0 \oplus \bigoplus_{\alpha \in \Sigma} \mfg_\alpha
$$
where 
$$
\mfg_\alpha = \{ X \in \mfg : [H,X] = \alpha(H)X \text{ for all } H \in \mfa\}.
$$
Let $\Sigma^+$ (resp. $\Sigma^-$) denote the restricted roots which are nonnegative (respectively, nonpositive) linear combinations of elements of $\Delta$. 

\subsection{Cartan projection} Let $\Ksf < \Gsf$ denote the maximal compact subgroup with Lie algebra $\mfk$. By the Cartan decomposition,  for any $g \in \Gsf$ there exists a unique $\kappa(g) \in \mfa^+$ such that
$$
g \in \Ksf (\exp \kappa(g)) \Ksf.
$$
This defines the map $\kappa : \Gsf \to \mfa^+$, called the \emph{Cartan projection}.

Fix a representative $w_0 \in \Ksf$ of the longest Weyl element that has order two. The \emph{opposition involution} is the map \begin{equation}\label{opp}
\opp := - {\rm Ad}_{w_0} : \mfa \to \mfa.\end{equation} It satisfies 
$$
\kappa(g^{-1}) = \opp \kappa(g) \quad \text{for all} \quad g \in \Gsf. 
$$
The adjoint $\opp^* : \mfa^* \to \mfa^*$ of the opposition involution preserves the set of simple roots. For a subset $\theta \subset \Delta$, we define 
$$
\opp^* \theta := \{ \opp^*\alpha : \alpha \in \theta \}.
$$

\subsection{Parabolic subgroups and flag manifolds} Given a nonempty $\theta \subset \Delta$, the associated parabolic subgroup $\Psf_\theta$ is the stabilizer under the adjoint action of the Lie algebra 
$$
\mathfrak{u}_\theta^+ := \bigoplus_{\alpha \in \Sigma_\theta^+} \mfg_\alpha
$$
where $\Sigma_\theta^+ := \Sigma^+ \smallsetminus {\rm span}(\Delta \smallsetminus \theta)$. 

The \emph{Furstenberg boundary} and, more generally, the \emph{$\theta$-boundary} are the quotient spaces 
$$
\Fc_{\Delta} := \Gsf / \Psf_{\Delta} \quad \text{and} \quad \Fc_{\theta} := \Gsf / \Psf_{\theta}.
$$
We also call $\Fc_{\theta}$ a \emph{partial flag manifold}.
Two elements $x \in \Fc_\theta$ and $y \in \Fc_{\opp^* \theta}$ are \emph{transverse} if there exists $g \in \Gsf$ such that 
$$
x = g \Psf_\theta \quad \text{and} \quad y = g w_0 \Psf_{\opp^* \theta},
$$
Equivalently, the pair $(x,y)$ is contained in the unique open $\Gsf$-orbit in $\Fc_\theta \times \Fc_{\opp^* \theta}$.

\subsection{The partial Iwasawa cocycle} 
We denote by $\Nsf < \Psf_{\Delta}$ the unipotent radical of $\Psf_{\Delta}$. Then we have the Iwasawa decomposition $\Gsf = \Ksf \Asf \Nsf$, and the product map $\Ksf \times \Asf \times \Nsf \to \Gsf$ is a diffeomorphism.

The \emph{Iwasawa cocycle} $B_{\Delta} : \Gsf \times \Fc_{\Delta} \to \mfa$ is defined as follows: for $g \in \Gsf$ and $x \in \Fc_{\Delta}$, $B_{\Delta}(g, x) \in \mfa$ is the unique element such that
$$
gk \in \Ksf ( \exp B_{\Delta}(g, x)) \Nsf
$$
where $k \in \Ksf$ satisfies $k \Psf_{\Delta} = x$ in $\Fc_{\Delta}$.

For general $\theta \subset \Delta$, let 
$$
\mfa_\theta := \bigcap_{\alpha \notin \theta} \ker \alpha.
$$
For $\alpha \in \Delta$, let $\omega_\alpha$ denote the (restricted) fundamental weight associated with $\alpha$.
Then $\{ \omega_\alpha|_{\mfa_\theta}\}_{\alpha \in \theta}$ is a basis for the dual space $\mfa_\theta^*$. Thus, we can identify $\mfa^*_\theta$ with the span of $\{ \omega_\alpha\}_{\alpha \in \theta}$ and, in particular, view $\mfa_\theta^*$ as a subspace of $\mfa^*$. Moreover, there exists a unique projection 
\begin{equation*}
\pi_{\theta} : \mfa \to \mfa_{\theta}
\end{equation*}
satisfying 
\begin{equation}\label{eqn:defn of pi_theta to mfa_theta} 
\omega_\alpha(\pi_\theta(H)) = \omega_\alpha(H)\quad\text{for all $H \in \mfa$ and $\alpha \in \theta$. }
\end{equation}

The \emph{partial Iwasawa cocycle} $B_{\theta} : \Gsf \times \Fc_{\theta} \to \mfa_{\theta}$ is defined as 
\begin{equation}\label{eqn:defn of BIW} 
B_{\theta}(g, x) := \pi_{\theta}(B_{\Delta}(g, \tilde x))
\end{equation} 
for any $\tilde x \in \Fc_{\Delta}$ that projects to $x \in \Fc_{\theta}$ under the canonical projection $\Fc_{\Delta} \to \Fc_{\theta}$. The above definition is independent of the choice of $\tilde x$ \cite[Lemma 6.1]{Quint_PS}.

\subsection{The limit cone} \label{subsec:Benoist limit cone}
The \emph{limit cone} of $\Gamma < \Gsf$ is defined as the asymptotic cone of $\kappa(\Gamma) \subset \mfa^+$:
$$\mathcal{L}(\Gamma) := \{ X \in \mfa^+ : \exists \{\gamma_n\} \subset \Gamma \text{ and } t_n \searrow 0 \text{ such that } t_n \kappa(\gamma_n) \rightarrow X\}. 
$$

Denoting by $\kappa_{\theta} := \pi_{\theta} \circ \kappa : \Gsf \to \mfa_{\theta}^+$,  the \emph{$\theta$-limit cone}  $\mathcal{L}_{\theta}(\Gamma)$ is the asymptotic cone of $\kappa_{\theta}(\Gamma) \subset \mfa_{\theta}^+$.
We have
$$
\mathcal{L}_{\theta}(\Gamma) = \pi_{\theta} (\mathcal{L}(\Gamma)) .
$$

\subsection{Patterson--Sullivan measures}
Let $\Gamma<\Gsf$ be a non-elementary $\theta$-discrete subgroup. The
$\theta$-limit set $\Lambda_\theta(\Gamma)\subset\Fc_\theta$ consists
of all $\xi\in\Fc_\theta$ for which there exists a sequence of
distinct elements $\{\gamma_n\}\subset\Gamma$ such that
$$
\min_{\alpha\in\theta}\alpha(\kappa(\gamma_n))\to+\infty\quad\text{and}\quad 
k_n\Psf_\theta\to\xi,
$$
where
$
\gamma_n=k_n\exp(\kappa(\gamma_n))\ell_n$ with
$k_n,\ell_n\in\Ksf $. When $\Gamma$ is Zariski dense, this is equivalently the set of
$\xi\in\Fc_\theta$ for which
$(\gamma_n)_*\operatorname{Leb}_\theta$ converges to the Dirac mass
at $\xi$ for some sequence $\{\gamma_n\}\subset\Gamma$ where $\operatorname{Leb}_\theta$ denotes the $\Ksf$-invariant probability measure on $\mathcal F_\theta$. Moreover,
 $\Lambda_{\theta}(\Gamma) \subset \Fc_{\theta}$ is the unique closed $\Gamma$-minimal subset  of $\Fc_\theta$ \cite{Benoist_properties}. 

For  $\phi \in \mfa_{\theta}^*$ and $\delta \ge 0$, a Borel probability measure $\mu$ on $\mathcal F_\theta$ is called a \emph{$\delta$-dimensional $\phi$-Patterson--Sullivan measure} for $\Gamma$ on $\mathcal F_\theta$ if, for every $\gamma \in \Gamma$, the measure $\gamma_* \mu$ is absolutely continuous with respect to $\mu$ and  
$$
\frac{d \gamma_* \mu}{d \mu}(x) = e^{ - \delta \phi(B_{\theta}(\gamma^{-1}, x))} \quad \text{for } \mu \text{-a.e. } x \in \mathcal F_{\theta}.
$$

If $\phi\in \frak a_\theta^*$ is tangent to $\psi_\Gamma^\theta$
at an interior direction of $\frak a_\theta^+$, then there exists a one-dimensional $\phi$-Patterson--Sullivan measure for $\Gamma$ supported on $\Lambda_\theta(\Gamma)$ (\cite{Quint_PS} for $\theta=\Delta$, \cite[Proposition 5.12]{KOW_PD} in general).

 We call $\Gamma$ \emph{$\theta$-divergent} or $\theta$-regular if,  for any $\alpha \in \theta$ and any sequence $\{\gamma_n\} \subset \Gamma$ of distinct elements,
$$
\alpha(\kappa(\gamma_n)) \to + \infty \quad \text{as } n \to + \infty.
$$
For $\phi \in \mfa_{\theta}^*$, we set 
$$
\delta^{\phi}(\Gamma) := \limsup_{T \to + \infty} \frac{ \log \# \{ \gamma \in \Gamma : \phi(\kappa(\gamma)) \le T \}}{T} \in [0, + \infty].
$$
If $\phi>0$ on $\mathcal L_\theta-\{0\}$, then $0< \delta^{\phi}(\Gamma)<+\infty$ \cite[Lemmas 3.5, 4.3]{KOW_PD}.

\begin{lemma}  \label{obs:finite critical exp fund weight}
  If $\Gamma < \Gsf$ is $\{\alpha\}$-discrete for $\alpha\in \Delta$, then $0<\delta^{\omega_\alpha}(\Gamma) < + \infty$.
\end{lemma}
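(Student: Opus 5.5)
Both inequalities come from comparing $\omega_\alpha\circ\kappa$ with the operator norm in a suitable representation. For the upper bound we use only the fact that $\Gamma$ is a discrete subgroup of a Lie group, so that orbits in the symmetric space $X=\Gsf/\Ksf$ grow at most exponentially. For the lower bound we use non-elementarity, in the form of the existence of a free subsemigroup.

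\textbf{Upper bound.} Let $o=[\Ksf]\in X$. There are constants $c,C>0$ with
$$
\omega_\alpha(\kappa(g))\ge c\,\norm{\kappa(g)}-C\quad\text{for all } g\in\Gsf.
$$
Indeed, $\omega_\alpha$ is a nonnegative combination of the simple roots, so $\omega_\alpha\ge 0$ on $\mfa^+$. One might worry that $\omega_\alpha$ could vanish on a face of $\mfa^+$. It does not: $\omega_\alpha$ pairs positively with every nonzero element of $\mfa^+$, since $\langle\omega_\alpha,\beta\rangle\ge0$ for all $\beta\in\Delta$, and the inverse Cartan matrix has positive entries on each simple factor. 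Thus $\omega_\alpha>0$ on $\mfa^+\smallsetminus\{0\}$, at least when $\Gsf$ is simple.

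If $\Gsf$ is not simple, $\omega_\alpha$ only sees the simple factor $\Gsf_1$ containing $\alpha$. In that case we work with the projection of $\Gamma$ to $\Gsf_1$. Here $\{\alpha\}$-discreteness says exactly that $\gamma\mapsto\omega_\alpha(\kappa(\gamma))$ is proper on $\Gamma$. Consequently the projection of $\Gamma$ to $\Gsf_1$ has finite kernel and discrete image.

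We then count:
$$
\#\{\gamma:\omega_\alpha(\kappa(\gamma))\le T\}\le \#\{\gamma: d_{X_1}(o,\gamma o)\le (T+C)/c\}.
$$
Since the image is discrete and the kernel is finite, the right-hand side is at most a constant times the volume of a ball of radius $(T+C)/c$ in $X_1$. That volume grows at most like $e^{2\rho\,(T+C)/c}$. Hence $\delta^{\omega_\alpha}(\Gamma)<+\infty$.

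\textbf{Lower bound.} We need exponential growth of $\#\{\gamma:\omega_\alpha(\kappa(\gamma))\le T\}$. Let $V$ be the irreducible representation of highest weight $\chi_\alpha$, chosen so that $\log\norm{\rho(g)}=\chi_\alpha(\kappa(g))$ for a $\Ksf$-invariant norm, where $\chi_\alpha$ is a positive multiple of $\omega_\alpha$. Submultiplicativity of the norm then gives
$$
\omega_\alpha(\kappa(gh))\le\omega_\alpha(\kappa(g))+\omega_\alpha(\kappa(h)).
$$
Since $\Lambda_\theta(\Gamma)$ is infinite, $\Gamma$ is not virtually solvable; in particular it is not virtually cyclic. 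By the Tits alternative, $\Gamma$ therefore contains a free subsemigroup on two generators $a,b$; a free subgroup also suffices. Put $L=\max\{\omega_\alpha(\kappa(a)),\omega_\alpha(\kappa(b))\}$. The $2^n$ distinct words of length $n$ in $a$ and $b$ all satisfy $\omega_\alpha(\kappa)\le nL$. Therefore
$$
\delta^{\omega_\alpha}(\Gamma)\ge\frac{\log 2}{L}>0.
$$

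\textbf{Main obstacle.} The delicate point is the non-simple case in the upper bound, where $\omega_\alpha$ is degenerate on $\mfa^+$. We handle it as described above: $\{\alpha\}$-discreteness makes the projection to the simple factor $\Gsf_1$ discrete with finite kernel, and we then count in the symmetric space $X_1$ of that factor.
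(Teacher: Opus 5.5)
Your approach differs from the paper's. The paper notes that $\mfa_{\{\alpha\}}$ is a line on which $\omega_\alpha=c_\alpha\alpha$ with $c_\alpha>0$. Hence $\omega_\alpha$ is positive on $\mathcal L_{\{\alpha\}}(\Gamma)\smallsetminus\{0\}=\mfa^+_{\{\alpha\}}\smallsetminus\{0\}$, and it then quotes \cite[Lemmas 3.5, 4.3]{KOW_PD} with $\theta=\{\alpha\}$. You instead prove both inequalities by hand.

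Your upper bound is sound. Properness of $\omega_\alpha\circ\kappa$ on $\Gamma$ gives a finite-kernel, discrete-image map to the simple factor carrying $\alpha$. Since $\Gsf$ is only an almost direct product, this map should be the quotient by the other factors, but that is cosmetic. On that factor $\omega_\alpha$ is comparable to the norm on the Weyl chamber, and orbit counting in its symmetric space finishes the argument.

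The gap is in the lower bound, at ``Since $\Lambda_\theta(\Gamma)$ is infinite, $\Gamma$ is not virtually solvable.'' With the paper's Cartan-decomposition definition of limit sets, this is false for general discrete groups. Take $\Gamma=\{u_v : v\in\mathbb Z^2\}<\mathrm{SL}(3,\Rb)$ with $u_v=\begin{pmatrix} I_2 & v\\ 0 & 1\end{pmatrix}$.
\begin{itemize}
\item $\Gamma$ is abelian and discrete.
\item The singular values of $u_v$ are comparable to $|v|,1,|v|^{-1}$, so $\omega_{\alpha_1}\circ\kappa$ is proper and $\Gamma$ is $\{\alpha_1\}$-discrete.
\item The top singular direction of $u_v$ tends to $[v:0]$, so $\Lambda_{\{\alpha_1\}}(\Gamma)$ is the whole line $\{[x:y:0]\}$, which is infinite.
\end{itemize}
Here the Tits alternative produces no free subsemigroup, so your word-counting step has nothing to count. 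The conclusion $\delta^{\omega_{\alpha_1}}(\Gamma)=2>0$ does happen to hold in this example. But it holds because unipotent Cartan projections grow only logarithmically, a mechanism your argument does not capture. So ``$\{\alpha\}$-discrete with infinite limit set'' does not by itself supply the free subsemigroup you need.

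In the setting where the lemma is actually used (Section 6: $\Gamma$ non-elementary $\theta$-transverse, $\alpha\in\theta$), the missing ingredient is the convergence-group structure. $\Gamma$ acts on the infinite set $\Lambda_\theta(\Gamma)$ as a non-elementary convergence group. It therefore contains two loxodromic elements with disjoint fixed-point pairs, and ping-pong on $\Lambda_\theta(\Gamma)$ makes suitable powers of them generate a free group. Replace the Tits-alternative step with this. Your subadditivity argument, via a Tits representation whose highest weight is a positive multiple of $\omega_\alpha$, then goes through. The result is a self-contained alternative to the citation of \cite{KOW_PD}.
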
 

\begin{proof}
Set $\theta_\alpha:=\{\alpha\}$. The space
$\mfa_{\theta_\alpha}$ is one-dimensional, and
$\mfa_{\theta_\alpha}^+$ is a ray.  Writing
$
\omega_\alpha
=
\sum_{\beta\in\Delta}c_\beta\beta,
$
the coefficient $c_\alpha$ is positive and $c_\beta\ge 0$. Since $\beta|_{\frak{a}_{\theta_\alpha}}=0$ for all $\beta\ne \alpha$,
$$
\omega_\alpha(H)
=
c_\alpha\alpha(H)>0 \quad\text{for every
$H\in\mfa_{\theta_\alpha}^+\smallsetminus\{0\}$.}
$$

Since $\mathcal L_{\theta_\alpha}(\Gamma)=\mfa_{\theta_\alpha}^+,$
it follows that $\omega_\alpha$ is positive on
$\mathcal L_{\theta_\alpha}(\Gamma)\smallsetminus\{0\}$. Therefore,
\cite[Lemmas 3.5, 4.3]{KOW_PD}, applied with
$\theta=\theta_\alpha$, gives
$
0<\delta^{\omega_\alpha}(\Gamma)<+\infty.
$
\end{proof}

In the following proposition, the linear form $\delta^{\phi}(\Gamma) \phi$ is always tangent to $\psi_\Gamma^\theta$ \cite[Lemma 4.5]{KOW_PD}, but the tangent direction is allowed to belong to the boundary of $\frak a_\theta^+$.
\begin{proposition}[{\cite[Proposition 3.2]{CZZ2024}}] \label{prop:PS existence}
  Suppose that $\Gamma < \Gsf$ is $\theta$-divergent. If $\phi \in \mfa_{\theta}^*$ satisfies $\delta^{\phi}(\Gamma) < + \infty$, then there exists a $\delta^{\phi}(\Gamma)$-dimensional $\phi$-Patterson--Sullivan measure for $\Gamma$ supported on $\Lambda_{\theta}(\Gamma)$.
\end{proposition}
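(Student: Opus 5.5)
The plan is the classical Patterson construction, carried out on $\Fc_\theta$ with the weight $\phi\circ\kappa_\theta$. Write $\delta:=\delta^\phi(\Gamma)<+\infty$, and for each $\gamma\in\Gamma$ fix a Cartan decomposition $\gamma=k_\gamma\exp(\kappa(\gamma))\ell_\gamma$ with $k_\gamma,\ell_\gamma\in\Ksf$. Set $\xi_\gamma:=k_\gamma\Psf_\theta\in\Fc_\theta$. Patterson's lemma supplies an increasing function $h:\Rb\to[1,+\infty)$ that is slowly varying: for every $\epsilon>0$ there is $T_\epsilon$ with $h(t+s)\le e^{\epsilon s}h(t)$ for $s\ge0$, $t\ge T_\epsilon$. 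It is chosen so that the modified series
$$
\tilde P(s):=\sum_{\gamma\in\Gamma}h\bigl(\phi(\kappa_\theta(\gamma))\bigr)e^{-s\phi(\kappa_\theta(\gamma))}
$$
converges for $s>\delta$ and diverges at $s=\delta$. For $s>\delta$ define the probability measure
$$
\mu_s:=\tilde P(s)^{-1}\sum_{\gamma\in\Gamma}h\bigl(\phi(\kappa_\theta(\gamma))\bigr)e^{-s\phi(\kappa_\theta(\gamma))}\,\mathcal D_{\xi_\gamma}.
$$
By compactness of $\Fc_\theta$, some sequence $s_n\searrow\delta$ gives a weak limit $\mu_{s_n}\to\mu$.

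\emph{Support.} Since $\tilde P(s_n)\to+\infty$, each finite subset of $\Gamma$ carries mass tending to zero. I will also check that only finitely many terms have $\phi(\kappa_\theta(\gamma))$ very negative. Indeed, convergence of the Poincaré series at some $s>\delta$ forces $\#\{\gamma:\phi(\kappa_\theta(\gamma))\le T\}<\infty$ for every $T$. Hence $\mu$ is a limit of masses carried by $\xi_{\gamma}$ with $\gamma$ leaving every finite set. By $\theta$-divergence, $\min_{\alpha\in\theta}\alpha(\kappa(\gamma))\to+\infty$ along such $\gamma$, so every accumulation point of the corresponding $\xi_\gamma$ lies in $\Lambda_\theta(\Gamma)$ by definition. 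Thus $\operatorname{supp}\mu\subset\Lambda_\theta(\Gamma)$.

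\emph{Conformality.} This is the main step. The key input is a standard Cartan--Iwasawa comparison: for fixed $g\in\Gsf$ and $\epsilon>0$ there is $R>0$ such that, whenever $\min_{\alpha\in\theta}\alpha(\kappa(\eta))\ge R$ and $x$ lies in a suitably small neighborhood $U$ of $\xi_\eta$ (uniform in $\eta$),
$$
\norm{\kappa_\theta(g\eta)-\kappa_\theta(\eta)-B_\theta(g,x)}\le\epsilon.
$$
I would prove this by comparing $\kappa_\theta$ with $B_\theta(\cdot,\xi_\eta)$ via the representations with highest weights $\omega_\alpha$, $\alpha\in\theta$. The required uniformity comes from the gap $\alpha(\kappa(\eta))\to\infty$, which makes the top singular direction of $\eta$ dominate.

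I would then fix $\gamma\in\Gamma$ and a continuous test function $f$. I compare $\int f\,d\gamma_*\mu_s$ with $\int f(x)e^{-\delta\phi(B_\theta(\gamma^{-1},x))}\,d\mu_s(x)$. The first equals a sum over $\eta$ of weights of $\eta$ evaluated at $\gamma\xi_\eta$. After reindexing $\eta\mapsto\gamma^{-1}\eta$, it becomes a sum of $h\bigl(\phi(\kappa_\theta(\gamma^{-1}\eta))\bigr)e^{-s\phi(\kappa_\theta(\gamma^{-1}\eta))}f(\gamma\xi_{\gamma^{-1}\eta})$. Two estimates control this sum. The comparison above with $g=\gamma^{-1}$ shows that the exponents differ by $\phi(B_\theta(\gamma^{-1},\xi_\eta))$ up to $\epsilon$. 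Continuity of the $\Gsf$-action shows that $\gamma\xi_{\gamma^{-1}\eta}$ is close to $\xi_\eta$ for large $\eta$. The slowly varying property of $h$ absorbs the bounded shift in its argument, at cost $e^{\epsilon C}$. Finitely many exceptional $\eta$ contribute nothing in the limit. Letting $n\to\infty$ and then $\epsilon\to0$, using continuity of $B_\theta(\gamma^{-1},\cdot)$, gives
$$
\frac{d\gamma_*\mu}{d\mu}(x)=e^{-\delta\phi(B_\theta(\gamma^{-1},x))}.
$$

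The obstacle I expect is the uniform comparison estimate above: making the neighborhood $U$ uniform in $\eta$ and getting the $\epsilon$-control from $\theta$-divergence alone. A secondary point is that $\phi$ need not be positive on the limit cone, so properness of $\phi\circ\kappa_\theta$ is unavailable. I would handle this only through convergence of the series for $s>\delta$, which already gives the finiteness needed above.
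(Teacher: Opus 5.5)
Your proposal is correct, and it is essentially the standard Patterson construction behind the cited \cite[Proposition 3.2]{CZZ2024}; the paper itself only cites that result. The shared ingredients are Dirac masses at $k_\gamma\Psf_\theta$ weighted by a slowly varying $h$, support in $\Lambda_\theta(\Gamma)$ via $\theta$-divergence, and conformality from the asymptotic relation $\kappa_\theta(g\eta)-\kappa_\theta(\eta)\approx B_\theta(g,k_\eta\Psf_\theta)$ for highly $\theta$-regular $\eta$.

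One correction is needed. The closeness of $\gamma\xi_{\gamma^{-1}\eta}$ to $\xi_\eta$ does not follow from continuity of the $\Gsf$-action alone. It comes from the asymptotic equivariance $\xi_{g\eta}\approx g\xi_\eta$ in $\Fc_\theta$ as $\min_{\alpha\in\theta}\alpha(\kappa(\eta))\to+\infty$, uniformly for $g$ in compact sets. You can prove this by the same singular-value-gap argument, in the representations with highest weights proportional to $\omega_\alpha$, that you plan to use for the Cartan--Iwasawa comparison.
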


Next, we recall some consequences of an ergodic dichotomy for transverse groups established in \cite{CZZ2024} and \cite{KOW_PD}. To state the result, we need a few definitions. Given $\phi \in \mfa_\theta^*$, the pair $(\Gamma, \phi)$ is of \emph{divergence type} if $\delta^\phi(\Gamma) < +\infty$ and 
$$
\sum_{\gamma \in \Gamma} e^{-\delta^{\phi}(\Gamma) \phi(\kappa(\gamma))} = +\infty.
$$
Also, recall that $\Gamma$ acts on $\Lambda_\theta(\Gamma)$ as a convergence group and hence there is a well-defined notion of conical limit point: $x \in \Lambda_\theta(\Gamma)$ is a \emph{conical limit point} if there exist a sequence $\{\gamma_n\} \subset \Gamma$ of distinct elements and distinct points $a,b \in \Lambda_\theta(\Gamma)$ such that $\gamma_n^{-1} x \rightarrow a$ and $\gamma_n^{-1} y \rightarrow b$ for all $y \in \Lambda_\theta(\Gamma) \smallsetminus \{x\}$. In this case, we say that \emph{$\{ \gamma_n \} \subset \Gamma$ converges conically to $x$}.
 The set of all conical limit points is called the \emph{conical limit set}.

\begin{theorem}[\cite{CZZ2024}, \cite{KOW_PD}]\label{thm:consequence of erg dich}    Suppose $\Gamma < \Gsf$ is a non-elementary $\theta$-transverse group and $\phi \in \mfa_{\theta}^*$ satisfies $\delta^{\phi}(\Gamma) < + \infty$. If $(\Gamma, \phi)$ is of divergence type, then:
\begin{enumerate}
\item There is a unique $\delta^{\phi}(\Gamma)$-dimensional $\phi$-Patterson--Sullivan measure for $\Gamma$ supported on $\Lambda_{\theta}(\Gamma)$; denote it by $\mu$.
\item If, in addition, $\Gamma$ is Zariski dense, then there is a unique $\delta^{\phi}(\Gamma)$-dimensional $\phi$-Patterson--Sullivan measure for $\Gamma$ supported on $\Fc_\theta$.
\item The action of $\Gamma$ on $(\Lambda_\theta(\Gamma), \mu)$ is ergodic. 
\item The conical limit set in $\Lambda_\theta(\Gamma)$ has full $\mu$-measure. 
\end{enumerate} 
\end{theorem}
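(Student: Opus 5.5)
The plan is to run the Hopf--Tsuji--Sullivan argument for the convergence action of $\Gamma$ on $\Lambda_\theta(\Gamma)$, using shadows in $\Fc_\theta$ as the basic tool; (4) comes first and everything else follows from it. Fix $\phi$ with $\delta:=\delta^\phi(\Gamma)<+\infty$ and $(\Gamma,\phi)$ of divergence type. By Proposition~\ref{prop:PS existence} there is a $\delta$-dimensional $\phi$-Patterson--Sullivan measure $\mu$ supported on $\Lambda_\theta(\Gamma)$.

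\emph{Shadow lemma.} The first step is a shadow lemma. For $R>0$ and $\gamma\in\Gamma$ let $O_R(\gamma)\subset\Fc_\theta$ be the $\theta$-shadow of a ball of radius $R$ about $\gamma o$, seen from the basepoint $o$. I will show
$$
\mu(O_R(\gamma))\asymp e^{-\delta\phi(\kappa_\theta(\gamma))}
$$
for $R$ large, with constants depending on $R$. Two inputs are needed:
\begin{itemize}
\item on shadows, $B_\theta(\gamma^{-1},x)$ agrees with $-\kappa_\theta(\gamma)$ up to a bounded error;
\item $\mu$ has no atoms, which follows from non-elementarity together with the convergence property. This keeps $\mu$ from concentrating near the single repelling point, which is what gives the lower bound.
\end{itemize}

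\emph{Conical points have full measure, giving (4).} The conical limit set is $\limsup_\gamma O_R(\gamma)$ for $R$ large. The shadow lemma turns divergence of the Poincaré series into $\sum_\gamma\mu(O_R(\gamma))=+\infty$. Transversality of distinct limit points supplies a quasi-independence estimate of the form $\mu(O_R(\gamma)\cap O_R(\gamma'))\ll\mu(O_R(\gamma))\,\mu(O_R(\gamma^{-1}\gamma'))$ whenever $\gamma$ lies roughly on the geodesic to $\gamma'$. A Kochen--Stone (Borel--Cantelli) argument then gives positive measure to the conical set. Since the conical set is $\Gamma$-invariant, the ergodicity argument below upgrades this to full measure.

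\emph{Ergodicity, giving (3).} Let $E$ be a $\Gamma$-invariant set with $\mu(E)>0$. Take a conical density point $x$ of $E$ and a sequence $\gamma_n$ converging conically to $x$. Pushing forward by $\gamma_n^{-1}$ expands shadows around $x$ to sets of definite size, and the Radon--Nikodym derivatives are controlled by the shadow lemma. The relative density of $E$ then passes to a set of uniformly large measure, which forces $\mu(E)=1$.

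\emph{Uniqueness, giving (1).} Let $\mu'$ be another $\delta$-dimensional $\phi$-Patterson--Sullivan measure on $\Lambda_\theta(\Gamma)$. Applying the shadow lemma to both measures on the same shadows, and using that conical points have full measure for each, a Vitali-type covering argument shows $\mu'\ll\mu$ with bounded derivative. This derivative is $\Gamma$-invariant, so it is constant by ergodicity, and both are probability measures, hence $\mu'=\mu$.

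\emph{Zariski dense case, giving (2).} Take any $\delta$-dimensional $\phi$-Patterson--Sullivan measure $\nu$ on $\Fc_\theta$ and write $\nu=\nu|_{\Lambda}+\nu|_{\Fc_\theta\smallsetminus\Lambda}$. Each piece is again conformal of dimension $\delta$. By (1), after normalization the first piece equals $\mu$. It remains to show that the second piece vanishes. Zariski density means every proper algebraic subvariety of $\Fc_\theta$ is null for any conformal measure, so $\nu$-almost every point is transverse to the limit set. The shadow lemma therefore holds for $\nu$ on all of $\Fc_\theta$. Divergence, run through the same Borel--Cantelli argument, then gives $\nu$ full mass on $\limsup_\gamma O_R(\gamma)$, and this set lies in $\Lambda_\theta(\Gamma)$.

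\emph{Expected main obstacle.} The hardest step is (4), specifically the quasi-independence of shadows in $\Fc_\theta$ for a merely transverse group. There is no uniform hyperbolicity to lean on, so I would first try to derive it from the convergence-group structure on $\Lambda_\theta(\Gamma)$ via the flow-space viewpoint.
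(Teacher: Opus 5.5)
The paper does not prove this statement. It quotes it from \cite{CZZ2024} and \cite{KOW_PD}, where it is proved by the Hopf--Tsuji--Sullivan scheme you outline, so your architecture is the right one. On the limit set, the quasi-independence you single out as the main obstacle is in fact routine. Suppose $x\in\Oc_R(\gamma)\cap\Oc_R(\gamma')\cap\Lambda_\Omega(\Gamma_0)$ and $\dist_\Omega(o,\gamma o)\le\dist_\Omega(o,\gamma' o)$. Lemma~\ref{obs:distance between rays}, applied to the translated ray $\gamma^{-1}[o,x)$, puts $\gamma^{-1}x$ in a slightly larger shadow of $\gamma^{-1}\gamma'$. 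The cocycle identity, together with the Iwasawa estimate on shadows \eqref{eqn:shadow Iwasawa estimate} used three times, then gives coarse additivity of $\kappa_\theta$.

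What is wrong in (4) is your input to the shadow lemma. Non-elementarity and the convergence property do not exclude atoms. For example, for a geometrically finite group of convergence type (such groups exist in variable negative curvature), the Patterson--Sullivan measure is carried by the countably many parabolic points. Atomlessness is a \emph{consequence} of (4), not an input: conformality would give the orbit of a conical atom unbounded mass. The lower shadow bound only needs $\mu$ not to be a Dirac mass, and non-elementarity does give that. In any case, Proposition~\ref{prop:Shadow Lemma} already holds for every Patterson--Sullivan measure on $\Lambda_\theta(\Gamma)$.

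The step that is genuinely incomplete is (2). First, nullity of proper subvarieties does not make $\nu$-almost every point transverse to the limit set. Write $Z_\eta$ for the set of flags not transverse to $\eta$. The union $\bigcup_{\eta\in\Lambda_\theta(\Gamma)}Z_\eta$ is not a subvariety. It contains $\Lambda_\theta(\Gamma)$ itself, since no flag is transverse to itself, so a posteriori it carries all of $\nu$. What the lower shadow bound actually needs is the uniform estimate $\inf_\eta\nu\bigl(\Fc_\theta\smallsetminus N_\epsilon(Z_\eta)\bigr)>0$ for small $\epsilon$, where $N_\epsilon$ denotes the $\epsilon$-neighborhood. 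That estimate does follow from $\nu(Z_\eta)=0$ for each $\eta$ together with compactness.

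Second, Kochen--Stone gives only \emph{positive} mass to $\limsup_\gamma O_R(\gamma)$, and you have no ergodicity on $\Fc_\theta$ to upgrade it. Positivity is enough if you run the argument for the conformal measure $\nu|_{\Fc_\theta\smallsetminus\Lambda_\theta(\Gamma)}$, since $\limsup_\gamma O_R(\gamma)\subset\Lambda_\theta(\Gamma)$. But that measure charges $O_R(\gamma)\cap O_R(\gamma')$ only off the limit set. So quasi-independence, and with it coarse additivity of $\kappa_\theta$, must be proved for your $\theta$-shadows in $\Fc_\theta$ that meet away from $\Lambda_\theta(\Gamma)$. Your proposed mechanism does not see those points: the flow space lives over $\Lambda^{(2)}_\Omega(\Gamma_0)$, and the shadows of the projectively visible model live only on $\partial\Omega$. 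The missing ingredient is either that estimate or some other reason why a divergent-type conformal measure cannot charge $\Fc_\theta\smallsetminus\Lambda_\theta(\Gamma)$.
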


\section{Projectively visible models of transverse groups}

In \cite{CZZ2024} and \cite{CZZ2026}, projectively visible models were constructed for transverse groups. In this section, we recall this construction.

\subsection{Background on convex real projective geometry}

We recall some terminology from real projective geometry.  

Suppose that $\Omega \subset \Pb(\Rb^d)$ is a \emph{properly convex domain}, i.e., $\Omega$ is a bounded convex open subset of some affine chart of $\Pb(\Rb^d)$. Its \emph{automorphism group} is
$$
\Aut(\Omega) := \{ g \in \PGL(d,\Rb) : g \Omega = \Omega\}.
$$
For a subgroup $H < \Aut(\Omega)$, its \emph{limit set} is the set $\Lambda_{\Omega}(H) \subset \partial \Omega$ of points $x \in \partial \Omega$ such that there exist $o \in \Omega$ and a sequence $\{h_n\} \subset H$ with $h_n o \rightarrow x$. 

Given $x,y \in \overline\Omega$, we let $[x,y]$ denote the projective line segment contained in $\overline\Omega$ joining $x$ to $y$. We further define $(x,y) :=[x,y] \smallsetminus \{x,y\}$, $[x,y) :=[x,y] \smallsetminus \{y\}$, and $(x,y] :=[x,y] \smallsetminus \{x\}$.

The \emph{Hilbert metric} $\dist_\Omega$ on $\Omega$ is defined as follows: for distinct $p,q \in \Omega$, let $a,b \in \partial \Omega$ be the unique points with $p,q \in (a,b)$ and in the order $a,p,q,b$, and define 
$$
\dist_\Omega(p,q) := \frac{1}{2} \log \frac{\norm{a-q}\norm{b-p}}{\norm{a-p}\norm{b-q}}
$$
where $\norm{\cdot}$ is any norm on any affine chart containing $\overline{\Omega}$. The Hilbert metric is a proper geodesic metric on which $\Aut(\Omega)$ acts by isometries. Moreover, for $p,q \in \Omega$, the line segment $[p,q]$ can be parametrized as a geodesic. 

Directly from the definition of the Hilbert metric and convexity, we have the following useful estimate on the distance between two geodesic rays with a common endpoint. 

\begin{lemma}\label{obs:distance between rays} Let $p,q \in \Omega$ and $x \in \partial \Omega$. For every $p' \in [p,x)$, we have
$$
\dist_\Omega(p', [q,x)) \leq \dist_\Omega(p,q). 
$$
\end{lemma}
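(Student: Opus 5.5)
Since $p' \in [p,x)$ and $p'$ sits on the ray from $p$ toward $x$, the natural plan is to find, for the given $p'$, a point $q' \in [q,x)$ obtained from a projective construction and then bound $\dist_\Omega(p',q')$ by $\dist_\Omega(p,q)$. We may assume $p \neq q$ and $p' \neq p$; the case $p'=p$ is trivial, taking $q'=q$. If $p,q,x$ are collinear, the statement is immediate: both rays lie on a common line, and one of $p'$, $q$ lies on the other's ray. So we assume $p,q,x$ span a projective plane and work inside the two-dimensional properly convex set $\Omega \cap \mathrm{span}(p,q,x)$. Its Hilbert metric agrees with $\dist_\Omega$ on that slice, because distances are computed along lines.

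Let $a,b \in \partial\Omega$ be the endpoints of the line through $p$ and $q$, in the order $a,p,q,b$. Consider the central projection from $x$. Let $q' \in [q,x)$ be the point where the line through $p'$ parallel to $[p,q]$ meets $[q,x)$, in an affine chart where the line through $x$... Better, and projectively invariant: let $q'$ be the intersection of $[q,x)$ with the line $L$ through $p'$ and the point $[p,q]$-line $\cap$ (line tangent data) $\ldots$. The cleanest choice is this. Let $a',b'$ be the intersections of the segments $[a,x]$ and $[b,x]$ with the line through $p'$ and the point $z$ where the lines $pq$ and $p'q'$ are required to meet. Concretely, pick an affine chart in which $x$ is at infinity in the direction of the rays. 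Then $[p,x)$ and $[q,x)$ become parallel half-lines. Take $q'$ on $[q,x)$ so that $p'q'$ is parallel to $pq$, and let $a',b'$ be the points on the lines $ax$, $bx$ (also parallel half-lines) on the line $p'q'$. By convexity, $a', b' \in \overline\Omega$ as long as they lie on the segments $[a,x]$, $[b,x]$, which they do since they are on the same side as $p'$. Hence the boundary points $\alpha,\beta$ of $\Omega$ on the line $p'q'$ lie outside $[a',b']$, in the order $\alpha, a', p', q', b', \beta$.

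The key step is the cross-ratio comparison. The map $(a,p,q,b) \mapsto (a',p',q',b')$ is a translation in this chart, so it preserves cross ratios. Hence the Hilbert-type cross ratio of $a',p',q',b'$ equals $2\dist_\Omega(p,q)$ after taking the logarithm. Enlarging the endpoints from $a',b'$ to $\alpha,\beta$ decreases the cross ratio $\frac{\norm{\alpha-q'}\norm{\beta-p'}}{\norm{\alpha-p'}\norm{\beta-q'}}$; this is the standard monotonicity, since each factor $\frac{t+c}{t}$ decreases in $t$. Therefore $\dist_\Omega(p',q') \le \dist_\Omega(p,q)$, which gives the claim.

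The main obstacle is making the chart choice legitimate, since $x \in \partial\Omega$ must be sent to infinity while $\Omega$ is still handled convexly. I would handle this by working in the two-dimensional slice and sending to infinity a supporting line at $x$ rather than the point $x$ alone. The image is then a convex, possibly unbounded, region. The cross-ratio monotonicity only needs convexity and the fact that $a',b'$ lie in the closure, so unboundedness is harmless. When $\beta$ or $\alpha$ is at infinity, the corresponding factors are interpreted as limits equal to $1$, and the inequality persists.
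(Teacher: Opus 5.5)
Your argument is correct, and it is the standard cross-ratio-plus-convexity argument the paper has in mind; the paper gives no proof beyond saying the lemma follows directly from the definition of the Hilbert metric and convexity. The detour through an unbounded chart is unnecessary, though: in an affine chart containing $\overline\Omega$, the homothety centered at $x$ with ratio $\norm{p'-x}/\norm{p-x}$ (your first, abandoned ``parallel line through $p'$'' construction) already sends $a,p,q,b$ to $a'\in[a,x]$, $p'$, $q'\in[q,x)$, $b'\in[b,x]$ while preserving the cross ratio, and it avoids the points-at-infinity bookkeeping, including the case your write-up skips where the supporting line at $x$ also contains $a$ or $b$.
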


An element $W \in \Gr_{d-1}(\Rb^d)$ 
is a \emph{supporting hyperplane} at $x \in \partial \Omega$ if $x \in \Pb(W)$ and $\Pb(W) \cap \Omega = \emptyset$. Convexity of $\Omega$ implies that every boundary point is contained in at least one supporting hyperplane, and we say that $\partial \Omega$ is \emph{$\Cc^1$-smooth at $x \in \partial\Omega$} if $x$ is contained in a unique supporting hyperplane.

We end this section by recording a result of Tholozan. Given a discrete group $\Gamma_0 < \Aut(\Omega)$, the \emph{Hilbert metric critical exponent} is 
$$
\delta_\Omega(\Gamma_0) := \limsup_{R \rightarrow  + \infty} \frac{ \log \#\{ \gamma \in \Gamma_0 : \dist_\Omega(\gamma o, o) \leq R\} }{R}
$$
where $o \in \Omega$ is any fixed point. It follows from Tholozan's work  that this critical exponent has a uniform upper bound, depending only on the dimension $d$. 

\begin{theorem}[\cite{Tholozan}]\label{thm:tholozan} If $\Gamma_0 < \Aut(\Omega)$ is discrete, then $\delta_\Omega(\Gamma_0) \leq d-2$. \end{theorem}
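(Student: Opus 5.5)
The plan is to reduce the orbit-counting bound to a bound on the volume growth of metric balls in $(\Omega,\dist_\Omega)$, and then to invoke Tholozan's volume entropy estimate for Hilbert geometries. Write $n := d-1 = \dim \Omega$, and fix an $\Aut(\Omega)$-invariant Radon measure $\mathrm{vol}_\Omega$ on $\Omega$. I would take the Busemann (equivalently, Holmes--Thompson) volume of the Finsler metric underlying $\dist_\Omega$. It is invariant because $\Aut(\Omega)$ acts by Finsler isometries, and it gives every nonempty open set positive measure.

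\textbf{Step 1 (packing).} Since $\Gamma_0$ is discrete in $\PGL(d,\Rb)$ and preserves $\Omega$, it acts properly on $\Omega$. Hence there is $r>0$ such that
$$
N := \#\{\gamma\in\Gamma_0 : \gamma B(o,r)\cap B(o,r)\neq\emptyset\}
$$
is finite. It follows that every point of $\Omega$ lies in at most $N$ of the balls $\gamma B(o,r) = B(\gamma o, r)$.

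\textbf{Step 2 (counting).} If $\dist_\Omega(\gamma o,o)\le R$, then $B(\gamma o,r)\subset B(o,R+r)$. Using the multiplicity bound from Step 1 and invariance of the measure, we get
$$
\#\{\gamma\in\Gamma_0 : \dist_\Omega(\gamma o,o)\le R\}\cdot \mathrm{vol}_\Omega(B(o,r)) \le N\,\mathrm{vol}_\Omega(B(o,R+r)).
$$
Therefore
$$
\delta_\Omega(\Gamma_0)\le \limsup_{R\to+\infty}\frac1R\log \mathrm{vol}_\Omega(B(o,R)),
$$
that is, the critical exponent is bounded by the volume entropy of $\Omega$.

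\textbf{Step 3 (volume entropy).} It remains to show that the volume entropy satisfies $\limsup_R \frac1R\log\mathrm{vol}_\Omega(B(o,R))\le n-1 = d-2$ for every properly convex $\Omega$. This is exactly Tholozan's theorem on the volume entropy of Hilbert geometries, and it is the main obstacle. To prove it directly, I would:
\begin{enumerate}
\item Show that $\mathrm{vol}_\Omega(B(o,R))$ is comparable, up to constants depending only on $n$, to an integral over the unit sphere of directions at $o$.
\item Show that the contribution of each direction at radius $R$ grows like $e^{(n-1)R}$ times a quantity controlled by the local shape of $\partial\Omega$ near the endpoint of the ray.
\item Use a centro-affine or duality argument (comparing $\Omega$ with its dual, or with osculating ellipsoids) to show that these boundary contributions integrate to something subexponential in $R$.
\end{enumerate}
The extremal case is the simplex, where the volume entropy equals $n-1$. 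Since the paper only records the statement with a citation, in practice I would simply cite \cite{Tholozan} for Step 3, so that the new content is just the packing argument of Steps 1--2.
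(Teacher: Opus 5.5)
Your argument is correct and is exactly what the paper means when it says the bound ``follows from Tholozan's work'': the packing estimate of Steps 1--2 bounds $\delta_\Omega(\Gamma_0)$ by the volume entropy of $\Omega$, and Tholozan's theorem bounds that entropy by $\dim\Omega-1=d-2$. Your three-item sketch of a direct proof is only heuristic, so the citation is doing the real work in Step 3. One side remark is wrong, though it does not affect the argument: the simplex is not extremal. Its Hilbert geometry is isometric to a normed vector space, so it has polynomial volume growth and entropy $0$, whereas the value $n-1$ is attained by the ellipsoid, i.e.\ real hyperbolic space.
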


\subsection{Projectively visible groups} \label{sec:projectively visible} Next we define a special class of groups acting on properly convex domains, the projectively visible groups, and their relation with transverse groups.

\begin{definition} Let $\Omega \subset \Pb(\Rb^d)$ be a properly convex domain. A discrete subgroup $\Gamma_0 < \Aut(\Omega)$ is \emph{projectively visible} if 
\begin{itemize} 
\item $(x,y) \subset \Omega$ for every $x,y \in \Lambda_\Omega(\Gamma_0)$, 
\item every $x \in  \Lambda_\Omega(\Gamma_0)$ is a $\Cc^1$-smooth point of $\partial \Omega$. 
\end{itemize} 
\end{definition}

Under some mild conditions on $\Gsf$ and $\Psf_\theta$, every transverse group can be identified with a projectively visible subgroup. We fix any norm $\norm{\cdot}$ on $\mfa_\theta$. 

\begin{theorem}[{\cite[Theorem 6.2]{CZZ2024}}] \label{thm:transverse have proj models} Suppose that $\Gsf$ has trivial center, that $\theta \subset \Delta$ is a nonempty subset with $\theta = \opp^* \theta$,  
   and that $\Psf_\theta$ contains no simple factors of $\Gsf$. If $\Gamma < \Gsf$ is $\theta$-transverse, then there exist $d \in \Nb$, a properly convex domain $\Omega \subset \Pb(\Rb^d)$, a projectively visible subgroup $\Gamma_0 < \Aut(\Omega)$, an isomorphism $\rho : \Gamma_0 \rightarrow \Gamma$, and a $\rho$-equivariant homeomorphism $\xi : \Lambda_\Omega(\Gamma_0) \rightarrow \Lambda_\theta(\Gamma)$. 
   
   Moreover, if $o \in \Omega$, then there exists $C_0 > 1$ such that  for all $\gamma \in \Gamma_0$,
   $$
   C_0^{-1} \dist_\Omega(o,\gamma o) -C_0 \leq \norm{\kappa_\theta(\rho(\gamma))}\leq    C_0 \dist_\Omega(o,\gamma o) + C_0.
   $$
  
\end{theorem}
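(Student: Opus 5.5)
The plan is to pass through a linear representation that turns $\theta$-transversality into $1$-transversality (transversality for the flag manifold $\Pb(\Rb^d)$ of lines, i.e.\ $\theta=\{\alpha_1\}$ for $\mathsf{PGL}(d,\Rb)$) in projective space. After that, the convex domain is built from the limit set and its dual hyperplanes, and the distance comparison is read off from singular values.

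\textbf{Representation.} First I would choose an irreducible representation $\tau:\Gsf\to\PGL(V)$ that is $\theta$-proximal, using a Tits-type construction. Its highest weight should be $\chi=\sum_{\alpha\in\theta}n_\alpha\omega_\alpha$ with all $n_\alpha>0$, and I would take $n_{\opp^*\alpha}=n_\alpha$, which is possible because $\theta=\opp^*\theta$.

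\begin{itemize}
\item Since $\Gsf$ has trivial center and $\Psf_\theta$ contains no simple factor, every simple factor acts nontrivially. So $\tau$ can be chosen with finite kernel, and after passing to a further composition with trivial kernel.
\item There are $\tau$-equivariant embeddings $\Fc_\theta\to\Pb(V)$ and $\Fc_{\opp^*\theta}\to\Pb(V^*)$. They send transverse pairs to a line and a hyperplane in general position.
\item The first singular value gap satisfies $\log(\sigma_1/\sigma_2)(\tau(g))=\min$ over the roots $\alpha\in\theta$ appearing, up to positive coefficients. Hence $\tau(\Gamma)$ is $P_1$-divergent.
\item $\tau(\Gamma)$ has limit set $\xi^1(\Lambda_\theta(\Gamma))$, and this limit set is transverse to $\xi^{d-1}(\Lambda_\theta(\Gamma))$.
\end{itemize}

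\textbf{Convex domain.} To get an invariant properly convex domain I would apply a standard trick: compose with $\mathrm{Sym}^2$, or with $V\otimes V^*$ followed by restriction to a suitable invariant subspace. This makes the image preserve the cone of positive semidefinite forms, and the limit set then consists of rank-one points.

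Next I would take the dual hyperplanes $\xi^*(x)$, $x\in\Lambda$, and define $\Omega$ as the connected component, containing the convex hull of the limit set, of the complement of $\bigcup_x \Pb(\xi^*(x))$. If necessary I would shrink it to a $\Gamma$-invariant open convex set strictly between the convex hull and this dual domain, for instance by taking a small Hilbert-metric neighbourhood of the hull.

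Then I would check the two visibility conditions:
\begin{itemize}
\item Open segments between distinct limit points lie in $\Omega$. This follows from transversality: the point $y$ is not on the hyperplane $\xi^*(x)$.
\item The supporting hyperplane at $\xi(x)$ is unique and equal to $\xi^*(x)$. This follows from $P_1$-divergence together with proximality, via a contraction argument applied to the sequence converging conically to $x$.
\end{itemize}
I expect this step, constructing $\Omega$ so that it is simultaneously properly convex and $\Cc^1$ at every limit point, to be the main obstacle. Naive choices such as the bare convex hull may fail to be open or may have corners, so the choice of the intermediate domain needs care.

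\textbf{Distance comparison.} Since $o$ lies in $\Omega$ and $\Omega$ is properly convex, the usual estimate gives
$$
\dist_\Omega(o,\gamma o)=\tfrac12\log\frac{\sigma_1}{\sigma_d}(\tau(\gamma))+O(1).
$$
In turn, $\log(\sigma_1/\sigma_d)(\tau(\gamma))=\chi(\kappa(\gamma))+\opp^*\chi(\kappa(\gamma))$ up to bounded error. Because $\chi$ and $\opp^*\chi$ are positive combinations of the $\omega_\alpha$ with $\alpha\in\theta$, this quantity is comparable to $\norm{\kappa_\theta(\gamma)}$, which yields the stated two-sided bound with a constant $C_0$.

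Finally, $\Gamma_0:=\tau(\Gamma)$ and $\rho=\tau^{-1}$ provide the group and the isomorphism in the statement, and $\xi$ is the restriction of the equivariant boundary embedding to $\Lambda_\theta(\Gamma)$.
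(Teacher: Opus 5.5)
Your distance comparison is the paper's argument. There one has an irreducible faithful $\Phi:\Gsf\to\mathsf{PSL}(d,\Rb)$ with $\Phi\circ\rho=\mathrm{id}_{\Gamma_0}$ and $\log\norm{\Phi(g)}=\sum_{\alpha\in\theta}m_\alpha\omega_\alpha(\kappa_\theta(g))$. Hence $\log(\norm{\Phi(g)}\norm{\Phi(g)^{-1}})=\sum_{\alpha\in\theta}(m_\alpha+m_{\opp^*\alpha})\omega_\alpha(\kappa_\theta(g))$, and \cite[Proposition 10.1]{DGK2024} compares this with $\dist_\Omega(o,\gamma o)$ up to an additive constant (your factor $\tfrac12$ is immaterial). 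The paper does not prove the existence of the model; it takes it from \cite[Theorem 6.2]{CZZ2024}. The only extra input it uses is that $\Omega$ lives in the projectivized space of the $\Gsf$-representation $\Phi$ itself, so the highest-weight formula applies to the group that actually acts on $\Omega$. Your write-up gets this bookkeeping wrong: after the $\mathrm{Sym}^2$ step, the group acting on $\Omega$ is no longer $\tau(\Gamma)$. That is harmless if you work with $\mathrm{Sym}^2\tau$ or its irreducible summand of highest weight $2\chi$. If you instead restrict to a merely $\Gamma$-invariant subspace (e.g.\ the span of the limit set), you need a separate argument that $\log(\sigma_1/\sigma_{\min})$ changes only by a bounded amount.

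The genuine gap is in your reconstruction of the model, at exactly the step you flag: $\Cc^1$-smoothness at \emph{every} point of $\Lambda_\Omega(\Gamma_0)$. A contraction argument along a sequence converging conically to $x$ says nothing at non-conical limit points, and transverse groups typically have them. Already the parabolic points of a relatively $\theta$-Anosov group are examples: there $\gamma_n=p^n$ fixes $x$ and nothing is contracted. Shrinking the domain makes things worse. If $\Omega'\subset\Omega$ and $x\in\partial\Omega'\cap\partial\Omega$, every supporting hyperplane of $\Omega$ at $x$ supports $\Omega'$, but not conversely, so a Hilbert neighbourhood of the hull does not inherit smoothness.

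The argument that works needs no dynamics: keep the maximal domain. In the $\mathrm{Sym}^2$ picture it is the projectivization of $\{q: q(\eta_z,\eta_z)>0\ \text{for all } z\in\Lambda\}$, where $v_z$ spans $\xi^1(z)$ and $\eta_z\in V^*$ has kernel $\xi^{d-1}(z)$. By duality, its supporting functionals at $[v_x\otimes v_x]$ lie in the closed cone generated by the $\eta_z\otimes\eta_z$, so they have the form $\int\eta_z\otimes\eta_z\,d\nu(z)$. Vanishing on $v_x\otimes v_x$ then forces $\nu$ to be concentrated at $x$, since $\eta_z(v_x)\neq 0$ for $z\neq x$. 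The same positivity, $a\,\eta_z(v_x)^2+b\,\eta_z(v_y)^2>0$, is what puts open segments between limit points inside the domain. Transversality of $y$ to $\xi^{d-1}(x)$ alone does not control the other hyperplanes $\xi^{d-1}(z)$. You would still owe a proof that this maximal domain is properly convex, i.e.\ that the functionals $q\mapsto q(\eta_z,\eta_z)$ have no common kernel in the space you work in. That is missing too, which is why the paper simply invokes \cite[Theorem 6.2]{CZZ2024}.
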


\begin{proof} The ``moreover'' assertion is not stated explicitly in \cite[Theorem 6.2]{CZZ2024} so we briefly explain why it follows from the proof in \cite[Appendix B]{CZZ2024}. In particular, there exists an irreducible faithful representation $\Phi : \Gsf \rightarrow \mathsf{PSL}(d,\Rb)$ such that $\Phi \circ \rho  = {\rm id}_{\Gamma_0}$. Further, there exist positive integers $\{m_\alpha\}_{\alpha \in \theta}$ such that 
$$
\log \norm{\Phi(g)} = \sum_{\alpha \in \theta} m_\alpha \omega_\alpha(\kappa_\theta(g))
$$
for all $g \in \Gsf$, where $ \norm{\Phi(g)} $ is the operator norm of $\Phi(g)$. 
Hence
$$
\log \left( \norm{\Phi(g)}\norm{\Phi(g)^{-1}}\right)= \sum_{\alpha \in \theta} (m_\alpha+m_{\opp^*\alpha}) \omega_\alpha(\kappa_\theta(g))
$$
for all $g \in \Gsf$. By \cite[Proposition 10.1]{DGK2024}, there exists $c_1 > 0$ such that 
$$
\abs{ \dist_\Omega(o,\gamma o) - \log \left( \norm{\Phi(\rho(\gamma))}\norm{\Phi(\rho(\gamma))^{-1}} \right)} \leq c_1
$$
for all $\gamma \in \Gamma_0$. Since $\sum_{\alpha \in \theta} (m_\alpha+m_{\opp^*\alpha})  \omega_\alpha$ is positive on $\mfa_\theta^+ \smallsetminus \{0\}$, there exists a constant $c_2> 1$ such that for all $g \in \Gsf$,
$$
c_2^{-1}\norm{\kappa_\theta(g)} \leq  \sum_{\alpha \in \theta} (m_\alpha+m_{\opp^*\alpha}) \omega_\alpha(\kappa_\theta(g))\leq c_2\norm{\kappa_\theta(g)} .$$
 Finally, using the fact that $\Phi \circ \rho  = {\rm id}_{\Gamma_0}$, we obtain  for all $\gamma \in \Gamma_0$,
   $$
   c_2^{-1} \dist_\Omega(o,\gamma o)  - c_2^{-1} c_1 \leq \norm{\kappa_\theta(\rho(\gamma))}\leq    c_2\dist_\Omega(o,\gamma o)  + c_2 c_1. 
   $$ \end{proof} 
 
 For general $\Gsf$, by replacing $\Gsf$ with a quotient, one can always assume that  $\Gsf$ has trivial center   and $\Psf_\theta$ contains no simple factors of $\Gsf$, see \cite[Section 2.4]{CZZ2024}. Hence, we can always find $(\Omega, \Gamma_0, \rho, \xi)$ satisfying Theorem~\ref{thm:transverse have proj models}.

\begin{definition} Given a $\theta$-transverse group $\Gamma < \Gsf$, a \emph{projectively visible model} of $\Gamma$ is a choice of $(\Omega, \Gamma_0, \rho, \xi)$ that satisfies the conclusion of Theorem~\ref{thm:transverse have proj models}.

\end{definition}

\subsection{Shadows}
We assume $\theta = \opp^* \theta$. 
Suppose that $\Gamma < \Gsf$ is a non-elementary $\theta$-transverse group. Choose a  projectively visible model   $(\Omega, \Gamma_0, \rho, \xi)$ of $\Gamma$. Since $\xi : \Lambda_{\Omega}(\Gamma_0) \to \Lambda_{\theta}(\Gamma)$ is a $\rho$-equivariant homeomorphism, we may regard measures on $\Lambda_{\theta}(\Gamma)$ as measures on $\Lambda_{\Omega}(\Gamma_0)$ by pulling them back through $\xi$.

Fix a basepoint $o \in \Omega$. For $R > 0$ and $\gamma \in \Gamma_0$, we define the shadow of the Hilbert $R$-ball centered at $\gamma o$ by
$$
\Oc_R(\gamma) := \{ x \in \partial \Omega : \dist_{\Omega}(\gamma o, [o, x)) < R \}.
$$
We also write 
$$\widehat{\Oc}_R(\rho(\gamma)) := \xi \left( \Oc_R(\gamma) \cap \Lambda_{\Omega}(\Gamma_0) \right) \subset \Lambda_{\theta}(\Gamma).$$

The following shadow Lemma  estimates the size of a shadow with respect to a Patterson--Sullivan measure.

\begin{proposition}[Shadow Lemma, {\cite[Proposition 7.1]{CZZ2024}}] \label{prop:Shadow Lemma}
Let $\phi \in \mfa_{\theta}^*$ and $\delta \ge 0$. Let $\mu$ be a $\delta$-dimensional $\phi$-Patterson--Sullivan measure for $\Gamma$ on $\Lambda_{\theta}(\Gamma)$. Then for all sufficiently large $R > 0$, there exists $C = C(R) > 1$ such that
   for all $\gamma \in \Gamma$, $$
  C^{-1} e^{-\delta \phi(\kappa(\gamma))} \le \mu \left( \widehat{\Oc}_R(\gamma) \right) \le C  e^{-\delta \phi(\kappa(\gamma))}.
  $$

\end{proposition}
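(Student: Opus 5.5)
The plan is the standard Sullivan-type argument, transported to the projectively visible model. I would carry out the steps in the following order.

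\emph{Step 1: reduce to a cocycle estimate.} Write $\gamma=\rho(g)$ with $g\in\Gamma_0$. By equivariance of $\xi$ and the Patterson--Sullivan property,
$$
\mu\bigl(\widehat{\Oc}_R(\gamma)\bigr)=\int_{\gamma^{-1}\widehat{\Oc}_R(\gamma)} e^{-\delta\phi(B_\theta(\gamma,x))}\,d\mu(x).
$$
Here $\gamma^{-1}\widehat{\Oc}_R(\gamma)=\xi\bigl(g^{-1}\Oc_R(g)\cap\Lambda_\Omega(\Gamma_0)\bigr)$. The set $g^{-1}\Oc_R(g)$ consists of the points $y\in\partial\Omega$ such that the ray from $g^{-1}o$ to $y$ passes within distance $R$ of $o$. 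So it suffices to prove two facts:
\begin{enumerate}
\item a uniform comparison $\abs{\phi(B_\theta(\gamma,x))-\phi(\kappa_\theta(\gamma))}\le C'(R)$ for all $x$ in this set;
\item a uniform lower bound on $\mu\bigl(g^{-1}\Oc_R(g)\bigr)$, valid for $R$ large.
\end{enumerate}
The upper bound of the shadow lemma then uses only (1) and $\mu\le1$. The lower bound uses (1) and (2).

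\emph{Step 2: the cocycle estimate, which I expect to be the main obstacle.} The standard fact for Iwasawa cocycles is that $B_\theta(\gamma,x)$ is within a bounded distance of $\kappa_\theta(\gamma)$ whenever $x$ stays uniformly transverse to the repelling flag of $\gamma$, that is, the attracting $\opp^*\theta$-flag of $\gamma^{-1}$. This is a Lie-theoretic estimate, and one can cite Quint or the appendix of \cite{CZZ2024}. The real issue is to translate the Hilbert-geometry condition into uniform transversality, via a compactness argument on the model.

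Suppose the condition fails. Then there are sequences $g_n$ and $y_n\in g_n^{-1}\Oc_R(g_n)\cap\Lambda_\Omega(\Gamma_0)$ along which the transversality degenerates. Pass to a subsequence with $g_n^{-1}o\to a\in\Lambda_\Omega(\Gamma_0)$ and $y_n\to b$. The ray $[g_n^{-1}o,y_n)$ meets the $R$-ball about $o$, so in the limit the line through $a$ and $b$ meets $\Omega$; hence $a\ne b$. By the convergence-group dynamics and the identification of the repelling flag of $\rho(g_n)$ with $\xi(a)$ (this is the content of the model's construction), the points $\xi(y_n)$ stay in a compact set of flags transverse to the repelling flags. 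Transversality of distinct limit points then gives the uniform bound, by properness of $\theta$-divergence. The counting of finitely many $g$ with small $\norm{\kappa_\theta}$ is handled by enlarging the constant.

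\emph{Step 3: the lower bound on the mass.} I would argue by contradiction. Suppose $g_n$ satisfy $\mu\bigl(g_n^{-1}\Oc_{R}(g_n)\bigr)\to0$. Pass to a subsequence with $g_n^{-1}o\to a$. For large $R$, the sets $g_n^{-1}\Oc_R(g_n)\cap\Lambda_\Omega(\Gamma_0)$ eventually contain $\Lambda_\Omega(\Gamma_0)$ minus a small neighbourhood of $a$. This uses Lemma~\ref{obs:distance between rays} together with the visibility of the limit set: lines joining $a$ to other limit points pass through $\Omega$ near $o$, at uniformly bounded distance for points outside a neighbourhood of $a$.

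It therefore suffices to know that $\mu$ has no atoms, or at least that $\mu$ is not a single Dirac mass. Since $\Lambda_\theta(\Gamma)$ is infinite and $\Gamma$-minimal, a $\Gamma$-quasi-invariant measure cannot be a single Dirac mass: otherwise its point would be a global fixed point. More carefully, one shows $\sup_{a}\mu\bigl(\{a\}\bigr)<1$, and then chooses neighbourhoods with mass below $1-\epsilon$. This gives $\mu\bigl(g^{-1}\Oc_R(g)\bigr)\ge\epsilon/2$ for all large $R$, uniformly in $g$, and completes the lower bound.
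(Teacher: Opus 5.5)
Your proposal is correct and is the standard Sullivan-type argument. The paper does not reprove this statement; it quotes it from \cite[Proposition 7.1]{CZZ2024}, whose proof has the same structure. Your Step 2 is, after the cocycle identity $B_\theta(\gamma,x)=-B_\theta(\gamma^{-1},\gamma x)$, exactly the comparison \cite[Lemma 7.3]{CZZ2024} that the paper itself invokes in the proof of Proposition~\ref{prop:constructing functions}. Your Step 3 is the usual observation that a quasi-invariant measure on the infinite minimal set $\Lambda_\theta(\Gamma)$ cannot be a Dirac mass. The one thing to state carefully is the order of quantifiers in Step 3: first fix $\epsilon$ and a radius $r$, uniform in $a$, with $\mu(B(a,r))\le 1-\epsilon$, and only then choose $R$ so that every $g^{-1}\Oc_R(g)$ contains the complement of such an $r$-ball. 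Your ``more carefully'' paragraph already does this.
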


An important property of shadows is their uniform multiplicity. Fix a linear form $\phi \in \mfa_{\theta}^*$ such that there exists a $\delta$-dimensional $\phi$-Patterson--Sullivan measure for $\Gamma$ on $\Lambda_{\theta}(\Gamma)$. Then for $n \in \Nb$, we set
$$
S_n^{\phi} := \{ \gamma \in \Gamma : \phi(\kappa(\gamma)) \in [n, n+1) \}.
$$

\begin{lemma}[Uniform multiplicity, {\cite[Lemma 8.2]{CZZ2024}}] \label{lem:uniform multiplicity}
Suppose $\phi \in \mfa_{\theta}^*$ is as above. Then 
  for any $R > 0$,
  $$
  \sup_{n \in \Nb} \sup_{x \in \Lambda_{\theta}(\Gamma)} \# \left\{ \gamma \in S_n^{\phi} : x \in \widehat{\Oc}_R(\gamma) \right\} < + \infty.
  $$
\end{lemma}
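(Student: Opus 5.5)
The plan is to pass to the projectively visible model $(\Omega,\Gamma_0,\rho,\xi)$ and use the fact that each shadow $\Oc_R(\gamma)$ is defined geometrically in the Hilbert metric. Two estimates should then bound the multiplicity: a uniform bound on the Hilbert distance between the orbit points $\gamma o$ of any two elements whose shadows share a point and which lie in the same annulus $S_n^\phi$, and a uniform count of orbit points in a Hilbert ball of fixed radius. For the duration of the argument, identify $\Gamma$ with $\Gamma_0$ via $\rho$.

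First, fix $x\in\Lambda_\Omega(\Gamma_0)$ and $n$, and suppose $\gamma,\gamma'\in S_n^\phi$ both satisfy $x\in\Oc_R(\gamma)\cap\Oc_R(\gamma')$. Then there are points $p,p'\in[o,x)$ with $\dist_\Omega(\gamma o,p)<R$ and $\dist_\Omega(\gamma' o,p')<R$. Because $[o,x)$ is a geodesic ray, we have $\dist_\Omega(p,p')=\abs{\dist_\Omega(o,p)-\dist_\Omega(o,p')}$, and this is at most $\abs{\dist_\Omega(o,\gamma o)-\dist_\Omega(o,\gamma' o)}+2R$. It therefore suffices to show that $\abs{\dist_\Omega(o,\gamma o)-\dist_\Omega(o,\gamma'o)}$ is uniformly bounded whenever $\abs{\phi(\kappa_\theta(\gamma))-\phi(\kappa_\theta(\gamma'))}\le1$ and both shadows contain a common point.

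This comparison is the main obstacle. Theorem~\ref{thm:transverse have proj models} only makes $\dist_\Omega(o,\cdot)$ and $\norm{\kappa_\theta}$ comparable up to a multiplicative constant, and $\phi$ need not be comparable to $\dist_\Omega$ at all. I would get around this by a cocycle argument. Let $q\in[o,x)$ be the point among $p,p'$ closer to $o$; say it is $p$, so that $\gamma o$ lies near $[o,x)$ before $\gamma'o$ does. The idea is to show that $\kappa_\theta(\gamma')-\kappa_\theta(\gamma)$ is within bounded error of $\kappa_\theta(\gamma^{-1}\gamma')$, using the fact that $o,\gamma o,\gamma' o$ lie within $R$ of a single geodesic in the stated order. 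This is the transverse-group analogue of the additivity of the Cartan projection along geodesics. I would derive it from the Iwasawa cocycle estimate $\kappa_\theta(\gamma)\approx B_\theta(\gamma,\cdot)$ that is used to prove the Shadow Lemma (Proposition~\ref{prop:Shadow Lemma}), evaluating at the point $\xi(x)$, which lies in both shadows, so that
\[
\kappa_\theta(\gamma')\approx B_\theta(\gamma',\cdot)=B_\theta(\gamma,\cdot)+B_\theta(\gamma^{-1}\gamma',\cdot)\approx\kappa_\theta(\gamma)+\kappa_\theta(\gamma^{-1}\gamma').
\]
Given this, $\abs{\phi(\kappa_\theta(\gamma^{-1}\gamma'))}\le 1+C$. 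The element $\eta:=\gamma^{-1}\gamma'$ moves $o$ to a point within bounded distance of the geodesic ray from $o$ toward $\gamma^{-1}x$, and $\phi(\kappa_\theta(\eta))$ is bounded.

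To finish, I would argue that the set of such $\eta$ is finite. The existence of the Patterson--Sullivan measure $\mu$, combined with the Shadow Lemma, yields the following: for each fixed $c$, the number of $\eta$ satisfying $\abs{\phi(\kappa_\theta(\eta))}\le c$ and whose shadow has $\mu$-mass bounded below is finite. This follows because shadows of elements at large Hilbert distance shrink to points, while $\mu$ has no atoms (conical points have full measure by Theorem~\ref{thm:consequence of erg dich}, or one argues directly). An alternative route, closer to the geometry, is to use properness of $\phi\circ\kappa_\theta$ on $\Gamma$, which holds whenever $\phi>0$ on the limit cone. Either way, $\dist_\Omega(\gamma o,\gamma'o)=\dist_\Omega(o,\eta o)$ is bounded by a constant $D$. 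The multiplicity is then at most $\#\{\eta\in\Gamma_0:\dist_\Omega(o,\eta o)\le D\}$, which is finite by discreteness of $\Gamma_0$ and properness of the Hilbert metric, and this bound is independent of $n$ and $x$.
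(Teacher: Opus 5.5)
Your reduction is sound up to the last step. If $\gamma,\gamma'\in S_n^\phi$ share the shadow point $x$ and $\gamma o$ comes first along $[o,x)$, then Lemma~\ref{obs:distance between rays} gives $\gamma^{-1}x\in\Oc_{2R}(\eta)$ for $\eta=\gamma^{-1}\gamma'$. The estimate $B_\theta(g^{-1},\xi(z))\approx-\kappa_\theta(g)$ for $z\in\Oc_{2R}(g)$, together with $B_\theta(\gamma'^{-1},\xi(x))=B_\theta(\eta^{-1},\gamma^{-1}\xi(x))+B_\theta(\gamma^{-1},\xi(x))$, then gives $\abs{\phi(\kappa_\theta(\eta))}\le 1+C$. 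Your display evaluates all three cocycles at the same point, which is not the cocycle identity, but that is cosmetic.

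The gap is in the final step, which is the real content of the lemma: you need the set $\{\eta:\abs{\phi(\kappa_\theta(\eta))}\le c,\ \widehat\Oc_{2R}(\eta)\neq\emptyset\}$ to be finite. The hypothesis is only that \emph{some} $\delta$-dimensional $\phi$-Patterson--Sullivan measure exists, and neither of your justifications follows from that.

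\emph{Non-atomicity of $\mu$.} Theorem~\ref{thm:consequence of erg dich} presupposes divergence type and $\delta=\delta^\phi(\Gamma)$, and neither is assumed. Even full measure of the conical set would not exclude an atom at a conical point $z$. The Shadow Lemma bound $\mu(\{z\})\le Ce^{-\delta\phi(\kappa_\theta(\gamma_n))}$ only helps if $\phi(\kappa_\theta(\gamma_n))\to+\infty$ along conical sequences, and that is exactly the properness you are trying to prove. Non-atomicity is in fact false in this generality: already in rank one, a group with a cusp admits Patterson--Sullivan measures of supercritical dimension supported on the orbit of a parabolic fixed point.

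\emph{Properness from positivity on the limit cone.} This uses $\phi>0$ on $\mathcal L_\theta(\Gamma)\smallsetminus\{0\}$, which is not a hypothesis. Through the Shadow Lemma (with $\mu(\widehat\Oc_R(\gamma))\le 1$ and $\delta>0$), the existence of $\mu$ only gives that $\phi\circ\kappa_\theta$ is bounded below on $\Gamma$, i.e.\ $\phi\ge 0$ on the limit cone. Nothing you say rules out $\phi$ vanishing on a boundary ray of $\mathcal L_\theta(\Gamma)$, where properness can fail.

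What your argument does prove is the lemma under the extra assumption $\delta^\phi(\Gamma)<+\infty$. Convergence of the Poincar\'e series at some finite exponent gives $\#\{\gamma:\phi(\kappa_\theta(\gamma))\le T\}<+\infty$ for every $T$. So the set of $\eta$ above is finite, and your ball-counting finishes with a bound independent of $n$ and $x$. This covers the only place the paper invokes the lemma, namely $\phi=\psi_0$, which is positive on $\mfa_\theta^+\smallsetminus\{0\}$. It also covers the critical-dimension measures of Proposition~\ref{prop:PS existence}. For the statement as written, you still have to do one of two things. Either derive properness of $\phi\circ\kappa_\theta$ (at least on elements with nonempty shadows) from the mere existence of $\mu$, or bound the number of such $\eta$ along a single ray by an argument that does not pass through finiteness of that set.
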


\subsection{Conical limit points}

We continue to suppose that $\Gamma < \Gsf$ is a non-elementary $\theta$-transverse group, $(\Omega, \Gamma_0, \rho, \xi)$ is a projectively visible model  of $\Gamma$, and $o \in \Omega$ is a fixed basepoint.  

Recall that the convergence action of $\Gamma$ on $\Lambda_\theta(\Gamma)$ has a well-defined notion of conical limit point and hence a conical limit set. The action of $\Gamma_0$ on $\Omega$ gives an equivalent characterization in terms of geodesic rays. The next result records this equivalence. 

\begin{proposition} \label{prop:char of conical} If $x \in \Lambda_\Omega(\Gamma_0)$ and $\{\gamma_n\} \subset \Gamma_0$ is a sequence of distinct elements, then the following are equivalent:  
  \begin{enumerate}
    \item $\{\rho(\gamma_n)\} \subset \Gamma$ converges conically to $\xi(x) \in \Lambda_\theta(\Gamma)$ (in the convergence group sense).
\item There exist $p \in \Omega$ and $R > 0$ such that $\gamma_n p \rightarrow x$ and 
$$
\sup_{n \geq 1} \dist_\Omega( \gamma_n p, [p, x)) < R. 
$$
\item There exist $R > 0$ such that 
$$
x \in \bigcap_{n \geq 1} \Oc_R(\gamma_n).
$$
  \end{enumerate}

\end{proposition}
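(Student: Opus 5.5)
We prove the cycle of implications $(3)\Rightarrow(2)\Rightarrow(1)\Rightarrow(3)$. The shadow condition and the geodesic-ray condition are both statements in $\Omega$, so we compare them using Hilbert geometry. The convergence-group condition lives on $\Lambda_\theta(\Gamma)$; we transport it to $\Lambda_\Omega(\Gamma_0)$ through the $\rho$-equivariant homeomorphism $\xi$. After this transport, $\rho(\gamma_n)$ converges conically to $\xi(x)$ exactly when $\gamma_n^{-1}x\to a$ and $\gamma_n^{-1}y\to b$ for all $y\in\Lambda_\Omega(\Gamma_0)\smallsetminus\{x\}$, with $a\neq b$ in $\Lambda_\Omega(\Gamma_0)$. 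The action of $\Gamma_0$ on $\Lambda_\Omega(\Gamma_0)$ is a convergence action by projective visibility, and we use this throughout.

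\emph{$(3)\Rightarrow(2)$.} Take $p=o$. The hypothesis $x\in\Oc_R(\gamma_n)$ for all $n$ says $\dist_\Omega(\gamma_n o,[o,x))<R$, so we only need $\gamma_n o\to x$. The elements $\gamma_n$ are distinct and $\Gamma_0$ is discrete, so $\gamma_n o$ leaves every compact set. Hence the points $z_n\in[o,x)$ with $\dist_\Omega(\gamma_n o,z_n)<R$ tend to $x$. Points at bounded Hilbert distance from a sequence converging to a $\Cc^1$ (or merely any) boundary point $x$ along $[o,x)$ converge to the same boundary point, since bounded Hilbert distance forces the Euclidean distance to go to zero near $\partial\Omega$ along a fixed ray. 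Therefore $\gamma_n o\to x$.

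\emph{$(2)\Rightarrow(1)$.} First, Lemma~\ref{obs:distance between rays} and the triangle inequality let us replace $p$ by $o$ at the cost of enlarging $R$. Next, choose $z_n\in[o,x)$ with $\dist_\Omega(\gamma_n o,z_n)<R$ and apply $\gamma_n^{-1}$. Then $\gamma_n^{-1}z_n$ lies in the compact ball $\overline B(o,R)$, and $\gamma_n^{-1}[o,x)$ is a ray through this ball, from $\gamma_n^{-1}o$ to $\gamma_n^{-1}x$. Passing to a subsequence, $\gamma_n^{-1}o\to b'$ and $\gamma_n^{-1}x\to a$, both in $\Lambda_\Omega(\Gamma_0)$. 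The limiting segment $[b',a]$ meets $\overline B(o,R)\subset\Omega$, so $b'\neq a$. Finally, the convergence property applied to $\{\gamma_n^{-1}\}$ shows $\gamma_n^{-1}y\to b'$ for all $y$ outside a single point. Using that $\gamma_n o\to x$, this exceptional point is $x$ (after a subsequence). This gives $(1)$, at least along a subsequence. To upgrade to the full sequence, we note that every subsequence has a further subsequence satisfying the conclusion; alternatively we simply state the equivalences up to passing to subsequences, as is standard.

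\emph{$(1)\Rightarrow(3)$.} This is the main obstacle. Let $a\neq b$ be as in $(1)$, and pick $y\neq x$ in the limit set. By projective visibility, $(a,b)\subset\Omega$. Fix $q\in(a,b)$ and a small compact neighborhood of it. For large $n$, the points $\gamma_n^{-1}x\to a$ and $\gamma_n^{-1}o\to b$; the latter holds because the convergence action extends to the compactification given by orbit points. So the segment $(\gamma_n^{-1}o,\gamma_n^{-1}x)$ converges to $(a,b)$ and passes within distance $1$ of $q$. The convergence here is Hausdorff convergence of segments with distinct endpoints, and it is what justifies uniform proximity to $q$, since $q$ lies in the interior. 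Translating back by $\gamma_n$, the ray $[o,x)$ passes within distance $1$ of $\gamma_n q$. Hence
\[
\dist_\Omega(\gamma_n o,[o,x))\le \dist_\Omega(o,q)+1=:R,
\]
so $x\in\bigcap_n\Oc_R(\gamma_n)$, after discarding finitely many $n$ and enlarging $R$ to cover them.

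The delicate point is justifying $\gamma_n^{-1}o\to b$, the repelling point. We would deduce it from the dichotomy for divergent sequences in $\Aut(\Omega)$ with projectively visible limit set. Concretely, every accumulation point of $\gamma_n^{-1}o$ lies in $\Lambda_\Omega(\Gamma_0)$, and since $\gamma_n^{-1}y\to b$ for all $y\neq x$, the attracting point of $\{\gamma_n^{-1}\}$ is $b$. The orbit of $o$ must converge to this attracting point by the $\Cc^1$-smoothness and visibility properties, as in \cite{CZZ2024}.
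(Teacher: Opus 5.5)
\paragraph{The step $(3)\Rightarrow(2)$ rests on a false principle.} Bounded Hilbert distance to a sequence tending to $x$ along $[o,x)$ does not force convergence to $x$, even at a $\Cc^1$ point. Take the open triangle $\Omega=\{[x_1:x_2:x_3]: x_i>0\}$, where $\dist_\Omega([u],[w])=\frac12\log\max_{i,j}\frac{u_iw_j}{u_jw_i}$. Let $o=[1:1:1]$, and let $x=[1:1:0]$, which is a $\Cc^1$ boundary point since its only supporting line is $\{x_3=0\}$. Then $z_n=[1:1:1/n]\in[o,x)$ and $w_n=[2:1:1/n]$ satisfy $\dist_\Omega(z_n,w_n)=\frac12\log 2$ for all $n$. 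Yet $w_n\to[2:1:0]\neq x$.

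What holds in general is weaker. If $\dist_\Omega(p_n,q_n)\le R$, $p_n\to\zeta\in\partial\Omega$ and $q_n\to\zeta'$, then $[\zeta,\zeta']\subset\partial\Omega$. Indeed, a point of the limit segment lying in $\Omega$ would stay at bounded distance from $p_n$. The missing ingredient is projective visibility. Every accumulation point of $\gamma_n o$ lies in $\Lambda_\Omega(\Gamma_0)$. Two distinct limit points $\zeta,\zeta'$ cannot span a boundary segment, since $(\zeta,\zeta')\subset\Omega$. Hence $\gamma_n o\to x$. The paper gets this conclusion from \cite[Lemma 12.5]{CZZ2024}, which says the shadows $\Oc_R(\gamma_n)$ shrink to points; that is where visibility enters its argument.

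\paragraph{The same fact is what your ``delicate point'' needs, and you only gesture at it.} In $(1)\Rightarrow(3)$ you need $\gamma_n^{-1}o\to b$. In $(2)\Rightarrow(1)$ you need the attracting and repelling points of $\{\gamma_n^{-1}\}$ to be $\lim\gamma_n^{-1}o$ and $\lim\gamma_n o$. With the boundary-segment fact and visibility this is short:
\begin{itemize}
\item Suppose $\gamma_n^{-1}y\to b$ for all $y\neq x$. Pick distinct $y_1,y_2\in\Lambda_\Omega(\Gamma_0)\smallsetminus\{x\}$ and $p\in(y_1,y_2)$.
\item The segments $(\gamma_n^{-1}y_1,\gamma_n^{-1}y_2)$ collapse to $b$, so $\gamma_n^{-1}p\to b$.
\item Since $\gamma_n^{-1}o$ stays within $\dist_\Omega(o,p)$ of $\gamma_n^{-1}p$, every accumulation point of $\gamma_n^{-1}o$ must equal $b$.
\end{itemize}
With this supplied, your $(1)\Rightarrow(3)$ argument is correct: Hausdorff convergence of $[\gamma_n^{-1}o,\gamma_n^{-1}x]$ to $[b,a]$, a point $q\in(a,b)$, then translating back. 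It gives a self-contained proof of what the paper simply imports as \cite[Lemma 3.6]{CZZ2026}; the paper proves only $(2)\Leftrightarrow(3)$ by hand.

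\paragraph{The subsequence upgrade does not work.} Saying that every subsequence has a further subsequence satisfying (1) does not give (1) for the full sequence, because $a$ and $b$ may change with the subsequence. For example, interleave a conically convergent sequence $g_n$ with $g_nh$, where $h^{-1}a\neq a$. This preserves (3), but $\gamma_n^{-1}x$ then has the two accumulation points $a$ and $h^{-1}a$. So $(2),(3)\Rightarrow(1)$ hold only up to subsequences, which is your second option. This is harmless here, since the paper only uses the implications starting from (1).
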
 

\begin{proof} 
  The equivalence between (1) and (2) is due to \cite[Lemma 3.6]{CZZ2026}. Thus it suffices to show that (2) and (3) are equivalent.  

Suppose (2) holds. Let $R_1 := \dist_\Omega(p,o)$. For each $n$, choose $p_n' \in [p,x)$ with $\dist_\Omega(\gamma_n p,p_n') < R$. Then, by Lemma~\ref{obs:distance between rays},
$$
\dist_\Omega(p_n',[o,x)) \leq \dist_\Omega(p,o) = R_1.
$$
Therefore,
$$
\dist_\Omega(\gamma_n o,[o,x)) \leq \dist_\Omega(\gamma_n o,\gamma_n p)+\dist_\Omega(\gamma_n p,p_n')+\dist_\Omega(p_n',[o,x)) < R+2R_1. 
$$
Thus 
$$
x \in \bigcap_{n \geq 1} \Oc_{R+2R_1}(\gamma_n)
$$
and hence (3) holds. 

Suppose (3) holds. Then clearly
$$
\sup_{n \geq 1} \dist_\Omega( \gamma_n o, [o, x)) < R. 
$$
and so we just need to prove that $\gamma_n o \rightarrow x$. Let $\dist_{\Pb}$ be a distance on $\Pb(\Rb^d)$ induced by a Riemannian metric. By \cite[Lemma 12.5]{CZZ2024}, we have ${\rm diam} \Oc_R(\gamma_n) \rightarrow 0$ with respect to this distance. For each $n$, we can fix $x_n \in \partial \Omega$ such that $\gamma_n o \in [o ,x_n]$. Then $x_n \in \Oc_R(\gamma_n) $. Since $\{\gamma_n\}$ are distinct and $\Gamma_0$ acts properly on $\Omega$, we have $\dist_{\Pb}(x_n, \gamma_n o) \rightarrow 0$. Since $x, x_n \in \Oc_R(\gamma_n)$ and ${\rm diam} \Oc_R(\gamma_n) \rightarrow 0$ we have $\dist_{\Pb}(x_n, x) \rightarrow~0$. Thus $\gamma_n o \rightarrow x$. 
\end{proof}

\section{Critical gap at infinity}

Suppose that $\theta = \opp^* \theta$, that $\Gamma < \Gsf$ is a non-elementary $\theta$-transverse group, and that $(\Omega, \Gamma_0, \rho, \xi)$ is a projectively visible model  of $\Gamma$. 

In this section, we consider the flow space given by the projectively visible model and Bowen--Margulis--Sullivan measures on it, and discuss their relation to the critical gap at infinity, following Wen's work \cite{Wen2026}.

\subsection{Flow spaces} 
We denote by $T^1 \Omega$ the unit tangent bundle over $\Omega$ with respect to the infinitesimal Hilbert metric, and let $g_t : T^1 \Omega \rightarrow  T^1 \Omega$, $t \in \Rb$, denote the geodesic flow.
For $v \in T^1 \Omega$, let $v^{\pm} = \lim_{t \to \pm \infty} g_t v \in \partial \Omega$ be the forward and backward endpoints of $v$, respectively.

We consider the flow space
$$\Gc := \{ v \in T^1 \Omega : v^+, v^- \in \Lambda_\Omega(\Gamma_0) \}.$$
 Let $\Lambda_\Omega^{(2)}(\Gamma_0)\subset \Lambda_\Omega(\Gamma_0) \times \Lambda_\Omega(\Gamma_0)$ denote the set of ordered pairs of distinct points. Since $\Gamma_0$ is projectively visible, $\Gc$ is homeomorphic to $\Lambda_{\Omega}^{(2)}(\Gamma_0) \times \Rb$. Using the Hopf parametrization for $\Gc$, we can make this identification explicit.

Since each point in $\Lambda_\Omega(\Gamma_0)$ is $\Cc^1$-smooth, for $\zeta \in \Lambda_\Omega(\Gamma_0)$ and $a,b \in \Omega$, the following limit exists (see \cite[Lemma 3.2]{Bray_ergodicity}):
$$
\beta_\zeta(a,b) := \lim_{\Omega \ni p \rightarrow \zeta} \dist_\Omega(p,a) - \dist_\Omega(p,b).
$$
Then fixing a base point $o \in \Omega$, the map 
$$
v \in \Gc \mapsto (v^-, v^+, \beta_{v^+}(o, \pi(v))) 
$$
is a homeomorphism and the natural $\Gamma_0$-action on $\Gc$ corresponds to the $\Gamma_0$-action on $\Lambda^{(2)}_\Omega(\Gamma_0) \times \Rb$ given by 
$$
\gamma \cdot (x,y,t) = (\gamma x, \gamma y, t + \beta_y(\gamma^{-1}o, o)). 
$$
For the rest of the paper, we identify 
$$
\Gc = \Lambda_\Omega^{(2)}(\Gamma_0) \times \Rb
$$
using this map. Under this identification, the Hilbert geodesic flow is given by 
$$
g_t(x,y,s) = (x,y,s+t) \quad \text{for } t \in \Rb.
$$

\subsection{BMS measures}

Suppose $\phi \in \mfa_\theta^*$ and $(\Gamma, \phi)$ is of divergence type. We define the Bowen--Margulis--Sullivan (BMS) measure on $\Gc$ associated with $\phi$. 

For $x, y \in \Lambda_{\theta}(\Gamma)$, the $\mfa_{\theta}$-valued Gromov product is defined as 
$$
\langle x, y \rangle_{\theta} := - \left( B_{\theta}(g^{-1}, x) + \opp B_{\theta}(g^{-1}, y) \right)
$$
for $g \in \Gsf$ with $g \Psf_{\theta} = x$ and $g w_0 \Psf_{\theta} = y$, noting that we are assuming $\theta = \opp^* \theta$. This is well defined and independent of the choice of $g$.

Notice that $\delta := \delta^{\phi}(\Gamma)= \delta^{\opp^* \phi}(\Gamma)$. So by Theorem~\ref{thm:consequence of erg dich}, there exist unique $\delta$-dimensional $\phi$- and $\opp^*\phi$-Patterson--Sullivan measures $\mu$ and $\mu_{\opp}$ for $\Gamma$ on $\Lambda_{\theta}(\Gamma)$, respectively. Then the Radon measure on $\Lambda_{\theta}^{(2)}(\Gamma)$ defined by 
$$
e^{ \delta \phi\left( \langle x, y \rangle_{\theta}\right)} d \mu(x) d \mu_{\opp}(y)
$$
is $\Gamma$-invariant. Hence, the Radon measure $\tilde m_{\phi}$ on $\Gc = \Lambda_{\Omega}^{(2)}(\Gamma_0) \times \Rb$ defined by
$$
d \tilde m_{\phi}(v) := e^{ \delta \phi\left( \langle \xi(v^+), \xi(v^-) \rangle_{\theta}\right)} d \mu(\xi(v^+)) d \mu_{\opp}(\xi(v^-)) dt
$$
is invariant under the $\Gamma_0$-action and the  Hilbert geodesic flow, where $dt$ denotes the Lebesgue measure on the $\Rb$-component. Since $\Gamma_0$ acts properly on $\Gc$, this measure descends to a geodesic-flow-invariant measure $m_{\phi}$ on the quotient $\Gamma_0 \backslash \Gc$ called the \emph{BMS measure associated with $\phi$}. 

This measure has the following dynamical properties. 

\begin{theorem}[{\cite{CZZ2024, KOW_PD,BCZZ2024}}] \label{thm:consequence of finite volume} The geodesic flow on $\Gamma_0 \backslash \Gc$ is ergodic with respect to $m_\phi$. Moreover, if $\norm{m_\phi} < +\infty$, then the geodesic flow on $\Gamma_0 \backslash \Gc$ is mixing with respect to $m_\phi$. 
\end{theorem}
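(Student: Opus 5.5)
The statement just before this point is Theorem~\ref{thm:consequence of finite volume}: ergodicity of the geodesic flow on $\Gamma_0\backslash\Gc$ with respect to $m_\phi$, and mixing when $\norm{m_\phi}<+\infty$. The plan is to lift everything to $\Gc=\Lambda_\Omega^{(2)}(\Gamma_0)\times\Rb$ and use the product structure of $\tilde m_\phi$. The argument has three steps.

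\emph{Step 1: ergodicity of the boundary action.} By Theorem~\ref{thm:consequence of erg dich}, $\Gamma$ acts ergodically on $(\Lambda_\theta(\Gamma),\mu)$ and on $(\Lambda_\theta(\Gamma),\mu_{\opp})$, and conical points have full measure. I would first upgrade this to ergodicity of the diagonal action on $(\Lambda^{(2)}_\theta(\Gamma),\mu\otimes\mu_{\opp})$. The tool is a Hopf-type argument.
\begin{itemize}
\item Take a flow-invariant, $\Gamma_0$-invariant function $F$ on $\Gc$.
\item Approximate it by Birkhoff averages of compactly supported continuous functions on $\Gamma_0\backslash\Gc$. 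The forward averages are constant along stable leaves, i.e.\ they depend only on $v^+$ and the time parameter.
\item The backward averages likewise depend only on $v^-$.
\end{itemize}
Asymptoticity of rays with the same endpoint comes from Lemma~\ref{obs:distance between rays} together with the $\Cc^1$-smoothness of limit points. Applied to recurrent (conical) vectors, this forces $F$ to be a function of $(v^+,v^-)$ that is separately a function of each coordinate, hence constant a.e.

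The main obstacle is that Birkhoff averages converge only where recurrence holds. When $m_\phi$ is infinite, one uses Hopf's ratio ergodic theorem instead. The input needed is conservativity, which follows from divergence type via the shadow lemma (Proposition~\ref{prop:Shadow Lemma}) and uniform multiplicity (Lemma~\ref{lem:uniform multiplicity}), in the standard Borel–Cantelli style.

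\emph{Step 2: ergodicity of the flow.} Given ergodicity of $\Gamma_0$ on pairs, ergodicity of the flow on the quotient follows formally. A flow-invariant set in the quotient lifts to a $\Gamma_0$-invariant set of the form $E\times\Rb$, and $E$ is then null or conull.

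\emph{Step 3: mixing when $\norm{m_\phi}<+\infty$.} I would use Babillot's criterion: mixing holds unless the length spectrum is non-arithmetic in the appropriate sense fails, or equivalently unless there are nonconstant $L^2$ eigenfunctions / the cross-ratio group is discrete. The obstacle here is showing that the translation lengths $\ell_\Omega(\gamma)$, $\gamma\in\Gamma_0$, generate a dense subgroup of $\Rb$. For Hilbert geometries coming from a non-elementary transverse group, this non-arithmeticity is the cited input from \cite{BCZZ2024}. I expect to have to invoke it rather than reprove it, arguing via Zariski density of $\Phi(\Gamma)$ or the existence of two loxodromics with generic cross-ratio. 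Combined with finiteness of $m_\phi$ and the product structure from Step 1, this yields mixing.
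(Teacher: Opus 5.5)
Your mixing argument is essentially the paper's. The paper takes density of the group generated by the Hilbert length spectrum $\{\ell_\Omega(\gamma)\}_{\gamma\in\Gamma_0}$ from \cite[Lemma 2.8]{BZ2023}, not from \cite{BCZZ2024}. It then runs the Babillot-type argument from the \emph{proof} of \cite[Theorem 4.2]{BCZZ2024}. The proof rather than the statement is invoked because the flow's time parameter is the Hilbert Busemann cocycle, while the Patterson--Sullivan measures come from $\phi$. This is also why $\ell_\Omega$ is the relevant spectrum, as you correctly say. That density result needs no Zariski density. Your fallback through Zariski density of $\Phi(\Gamma)$ would only cover the case where $\Gamma$ is Zariski dense, which is not assumed here.

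For ergodicity you take a genuinely different route. The paper gets it in one line from the ergodic dichotomy of \cite{CZZ2024,KOW_PD} for divergence-type pairs, where the Hopf-type work on $(\Lambda^{(2)}_\theta(\Gamma),\mu\otimes\mu_{\opp})$ is already done. Your Step~2 is exactly the formal reduction implicit in that citation, and it is correct. Rederiving pair ergodicity by a Hopf argument in the Hilbert model makes the proof self-contained, but it needs more than you state:
\begin{itemize}
\item Lemma~\ref{obs:distance between rays} only shows that the distance between rays with a common endpoint is non-increasing. Decay to $0$ needs a separate argument: restrict to the projective plane containing both rays, where the endpoint is still a $\Cc^1$ point, so the boundary is locally a graph $g(u)=o(\abs{u})$ over the tangent line.
\item You need a $\Gamma_0$-invariant metric on $\Gc$ compatible with its topology, so that lifts of compactly supported functions are uniformly continuous along asymptotic vectors.
\item In infinite measure, you need the ratio form of the argument, with a denominator function that is positive near the support.
\end{itemize}

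Finally, take conservativity directly from Theorem~\ref{thm:consequence of erg dich}(4). Conical points have full measure, so almost every forward orbit returns to a compact set. A shadow-lemma plus first Borel--Cantelli argument only proves the converse implication: that convergence type makes the conical set null.
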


\begin{proof} 
Ergodicity follows from the ergodic dichotomy in \cite{CZZ2024,KOW_PD}, as we are assuming that $(\Gamma, \phi)$ is of divergence type. The Hilbert length spectrum $\{ \ell_{\Omega}(\gamma) : \gamma \in \Gamma_0 \}$, where $\ell_{\Omega}$ denotes the Hilbert translation length, generates a dense additive subgroup of $\Rb$ by \cite[Lemma 2.8]{BZ2023}. Then mixing follows from the proof of mixing for continuous GPS systems in \cite[Theorem 4.2]{BCZZ2024}.
\end{proof}

\subsection{Critical gap at infinity}  \label{subsec:critical gap}

In this section, we recall Wen's results in \cite{Wen2026} showing that a ``critical gap at infinity'' implies the finiteness of the BMS measure.

Suppose that $\theta $ is $\operatorname{i}$-invariant and that $\Gamma < \Gsf$ is a non-elementary $\theta$-transverse group.
Fix $\phi \in \mfa_\theta^*$. We suppose that  $(\Gamma, \phi)$ is of divergence type, and that $(\Omega, \Gamma_0, \rho, \xi)$ is a projectively visible model  of $\Gamma$.

For a compact subset $K \subset \Gc$, let $\Gamma_K \subset \Gamma_0$ be the subset of elements $\gamma \in \Gamma_0$ for which there exist $v \in K$ and $t \geq 0$ such that  
$$
g_t v \in \gamma K \quad \text{and} \quad g_{[0,t]}v \cap \Gamma_0  K \subset K \cup \gamma K.
$$
Then for $\phi \in \mfa_\theta^*$, let
$$
\delta^\phi(K) := \inf \left\{ s > 0 : \sum_{\gamma \in \Gamma_K} e^{-s \phi(\kappa(\rho(\gamma)))} < +\infty\right\}.
$$

 By \cite[Lemma 4.2]{Wen2026}, there exists a compact set $K_0 \subset \Gc$ such that if $\gamma \in \Gamma_0$, then  $g_{[0,\infty)}v \cap \gamma K_0 \neq \emptyset$ for some $v \in K_0$. For subsets containing $K_0$, Wen established the following monotonicity property. 
 
 \begin{lemma}[{\cite[Lemma 4.4]{Wen2026}}] \label{lem:monotone of entropy of K} Suppose $K_1, K_2 \subset \Gc$ are compact. If $K_0 \subset K_1 \subset {\rm int}(K_2)$, then 
 $$
 \delta^\phi(K_2) \leq \delta^\phi(K_1).
 $$
 \end{lemma}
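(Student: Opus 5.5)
The plan is to show $\Gamma_{K_2} \subset F\cdot \Gamma_{K_1} \cdot F'$ up to finitely many elements and finitely many "correction" elements. Then the Poincaré series over $\Gamma_{K_2}$ is dominated by a constant multiple of the one over $\Gamma_{K_1}$, which gives $\delta^\phi(K_2)\le\delta^\phi(K_1)$. Concretely, fix $\gamma\in\Gamma_{K_2}$, realized by $v\in K_2$ and $t\ge0$ with $g_tv\in\gamma K_2$ and $g_{[0,t]}v\cap\Gamma_0K_2\subset K_2\cup\gamma K_2$.

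\emph{Step 1: find the entry and exit times into $\Gamma_0 K_1$.} Since $K_0\subset K_1$, the orbit segment must meet $\Gamma_0K_1$ near its beginning and its end, after moving a bounded amount. Using the defining property of $K_0$ (every translate $\gamma K_0$ is hit by a forward ray from $K_0$), we replace $v$ by a point $v'\in K_1$ whose geodesic has endpoints close to those of $v$ and passes through $\gamma K_1$. Since $K_1\subset\operatorname{int}(K_2)$, any piece of the $v'$-orbit that lies in $\Gamma_0K_1$ lies, after bounded perturbation, inside $\Gamma_0K_2$. By the restriction on the $v$-orbit, such a piece can only sit in $K_2\cup\gamma K_2$.

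\emph{Step 2: decompose.} Let $s_1$ be the last time at which $g_sv'\in K_1$ near the start, and $s_2$ the first time after that at which $g_sv'$ lies in some $\eta K_1$. The segment $g_{[s_1,s_2]}v'$ meets $\Gamma_0K_1$ only in $K_1\cup\eta K_1$, so $\eta\in\Gamma_{K_1}$. By the previous step, $\eta K_1$ meets $K_2\cup \gamma K_2$; since $\eta K_1$ occurs on the segment at a nonzero time, it should meet $\gamma K_2$ (or $K_2$ only for a finite set of $\eta$). Hence $\gamma\in\eta F$ with $F:=\{f\in\Gamma_0: fK_2\cap K_2\ne\emptyset\}$, which is finite by properness. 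Thus $\Gamma_{K_2}\subset \Gamma_{K_1}F\cup F$. In practice Step 1 may force two-sided corrections, i.e.\ $\gamma\in F\,\Gamma_{K_1}F$, and that is still fine.

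\emph{Step 3: compare the series.} For a finite set $F$ we have $\phi(\kappa(\rho(f\eta f')))\ge\phi(\kappa(\rho(\eta)))-C$, because $\|\kappa(gh)-\kappa(g)\|\le\|\kappa(h)\|$ and similarly on the left. Summing gives
\[
\sum_{\gamma\in\Gamma_{K_2}}e^{-s\phi(\kappa(\rho(\gamma)))}\le (\#F)^2e^{sC}\sum_{\eta\in\Gamma_{K_1}}e^{-s\phi(\kappa(\rho(\eta)))},
\]
and the inequality of critical exponents follows.

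The main obstacle is Step 1: guaranteeing that the chosen $K_1$-segment genuinely stays within $\Gamma_0K_2$ only at its ends. This is where $K_1\subset\operatorname{int}K_2$ is essential, and I would first try to take $v'=v$ directly when $v\in K_1$, reducing to that case using $K_0$.
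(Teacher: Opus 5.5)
The paper gives no proof of this lemma; it is quoted from Wen. So your argument has to stand on its own, and it does not yet. Your Step~3 is correct as written, and Steps~2--3 are a reasonable skeleton. But Step~1, which you yourself flag as the main obstacle, carries all the content of the lemma, and the justification you sketch does not work.

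\begin{itemize}
\item Nothing forces the orbit of $v$ to meet $\Gamma_0K_1$ at all. The point $v$ may lie in $K_2\smallsetminus\Gamma_0K_1$ on a flow line that never enters $\Gamma_0K_1$, so there is no entry ``near its beginning and its end'' to find.
\item The defining property of $K_0$ only gives some $w\in K_0$ and $T\ge0$ with $g_Tw\in\gamma K_0$. It gives no relation between $w^-$ and $v^-$, so ``endpoints close to those of $v$'' is not available. All you get is that $\pi(w),\pi(v)$ and $\pi(g_Tw),\pi(g_tv)$ are within a fixed distance $D=D(K_0,K_2)$.
\item Most importantly, a bounded perturbation is useless against $K_1\subset\operatorname{int}(K_2)$, which gives only an arbitrarily small margin. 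A $D$-neighborhood of $\Gamma_0K_1$ is not contained in $\Gamma_0K_2$, and Hilbert geometry has no uniform contraction of nearby geodesics that would make $D$ small.
\end{itemize}

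Consequently you have not shown that the translates of $K_1$ met by the $w$-segment are all of the form $fK_1$ or $\gamma fK_1$ with $f$ in a fixed finite set, and Step~2 has nothing to work with. Step~2 also needs adjusting as written: if the first translate $\eta K_1$ met after the last visit to $K_1$ meets $K_2$ rather than $\gamma K_2$, you learn nothing about $\gamma$.

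One way to close the gap is a compactness argument rather than a perturbation estimate. \textbf{Claim:} there are $S>0$ and a finite $F\ni e$ such that, for every $\gamma\in\Gamma_{K_2}$ and every $w\in K_0$, $T\ge0$ with $g_Tw\in\gamma K_0$, each translate $\eta K_1$ met by $g_{[0,T]}w$ has $\eta\in F\cup\gamma F$.

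For times in $[0,S]\cup[T-S,T]$ the claim follows from properness of the action. For the middle, argue by contradiction with $g_{s_n}w_n\in\eta_nK_1$, $s_n\to\infty$, $T_n-s_n\to\infty$, and a witness $(v_n,t_n)$ for $\gamma_n$:
\begin{itemize}
\item Pass to a limit $u\in K_1$ of $\eta_n^{-1}g_{s_n}w_n$. Then $\eta_n^{-1}\pi(w_n)\to u^-$.
\item $\eta_n^{-1}\pi(v_n)$ is within $D$ of $\eta_n^{-1}\pi(w_n)$, so it accumulates in the closed face of $u^-$. It also accumulates on $\Lambda_\Omega(\Gamma_0)$. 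Two distinct limit points cannot share a closed face, since $(x,y)\subset\Omega$, so it converges to $u^-$. The same holds at the forward end.
\item Hence the translated $v_n$-segments pass arbitrarily close to $u\in\operatorname{int}(K_2)$, i.e.\ $g_{s_n'}v_n\in\eta_nK_2$ for some $s_n'\in[0,t_n]$.
\item The excursion property of $v_n$ then forces $\eta_nK_2$ to meet $K_2\cup\gamma_nK_2$. This contradicts $\dist_\Omega(o,\eta_no)\to\infty$ and $\dist_\Omega(\gamma_no,\eta_no)\to\infty$.
\end{itemize}

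With the claim, take $\gamma\notin FF^{-1}$. Let $s_2$ be the first time $g_sw$ lies in some $\gamma fK_1$ with $f\in F$, and let $s_1\le s_2$ be the last earlier time it lies in some $fK_1$ with $f\in F$. Then $f_1^{-1}\gamma f_2\in\Gamma_{K_1}$, so $\Gamma_{K_2}\subset F\Gamma_{K_1}F^{-1}\cup FF^{-1}$, and your Step~3 finishes the proof.
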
 
 
 The \emph{entropy at infinity} for $\phi \in \mfa_\theta^*$ is
 $$
 \delta_\infty^\phi(\Gamma) := \inf\left\{ \delta^\phi(K) : K \subset \Gc \text{ compact and } K_0 \subset K\right\}.
 $$
Then $\phi$ has a \emph{critical gap at infinity for $\Gamma$} if 
 $$
  \delta_\infty^\phi(\Gamma) <  \delta^\phi(\Gamma).
  $$
  This is also referred to as the \emph{strongly positively recurrent} (SPR) condition.
  
  Recall the $\theta$-limit cone $\mathcal{L}_{\theta}(\Gamma)$ from Section~\ref{subsec:Benoist limit cone}.
  \begin{theorem}[{\cite[Theorems 4.8 and 4.9]{Wen2026}}] \label{thm:Wen's result} If $\phi \in \mfa_\theta^*$ is positive on 
    $\mathcal{L}_{\theta}(\Gamma) \smallsetminus \{0\}$ and has a critical gap at infinity for $\Gamma$, then $(\Gamma, \phi)$ is of divergence type and the  BMS measure on $\Gamma_0 \backslash \Gc$ associated with $\phi$ is finite. 
  \end{theorem}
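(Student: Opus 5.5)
We follow the scheme of Pit--Schapira adapted to the projectively visible model: first deduce divergence type from the gap, then use the gap again to bound the mass of the BMS measure near infinity via excursions. Throughout we write $\delta:=\delta^\phi(\Gamma)$. Since $\phi$ is positive on $\mathcal L_\theta(\Gamma)\smallsetminus\{0\}$, we have $0<\delta<+\infty$. Fix compact sets $K_0\subset K_1\subset \operatorname{int}K_2$ with $\delta^\phi(K_1)<\delta$; this is possible by the gap and Lemma~\ref{lem:monotone of entropy of K}.

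\emph{Step 1 (divergence type).} Every $\gamma\in\Gamma_0$ factors as a concatenation $\gamma=\gamma_1\cdots\gamma_k$ of excursions $\gamma_i\in\Gamma_{K}$, by cutting the orbit segment from $K_0$ to $\gamma K_0$ at its successive visits to $\Gamma_0K$. Along such a concatenation, $\phi(\kappa(\rho(\gamma)))$ is coarsely additive up to an additive constant $c$ per piece. The reason is that the pieces lie along a Hilbert geodesic passing within bounded distance of the points $\gamma_1\cdots\gamma_j o$; the uniform comparisons (Theorem~\ref{thm:transverse have proj models} together with the shadow lemma, Proposition~\ref{prop:Shadow Lemma}) then give a two-sided bound $e^{-\delta\phi(\kappa(\gamma))}\asymp \prod_i e^{-\delta\phi(\kappa(\gamma_i))}$ up to $C^{k}$. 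Set
$$
P(s):=\sum_{\gamma\in\Gamma_K}e^{-s\phi(\kappa(\gamma))}.
$$
The Poincaré series of $\Gamma$ is then comparable to $\sum_k (C^{\pm1}P(s))^k$. Because $\delta^\phi(K)<\delta$, $P$ is finite and continuous on a neighborhood $(\delta^\phi(K),\infty)$ of $\delta$. The Poincaré series of $\Gamma$ diverges for $s<\delta$ and converges for $s>\delta$, so the geometric series must reach its radius exactly at $s=\delta$, that is, the relevant renewal quantity equals $1$ there. Consequently $\sum_k (\cdot)^k$ diverges at $s=\delta$. This is the standard Pit--Schapira argument. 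The delicate point is that the constant $C$ spoils an exact renewal equation. To handle it, I would run the argument with the normalized BMS measure directly rather than with the series: show that the first-return structure to $K$ under the geodesic flow is governed by a transfer quantity whose value at $\delta$ must be exactly $1$ by the Patterson--Sullivan equivariance.

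\emph{Step 2 (finiteness).} Once divergence type holds, $\tilde m_\phi$ is defined. Its mass outside $\Gamma_0K$ is estimated by summing, over excursions $\gamma\in\Gamma_K$, the mass of the flow tubes from $K$ to $\gamma K$. By the product structure and the shadow lemma, such a tube has measure $\ll (1+\phi(\kappa(\gamma)))\,e^{-\delta\phi(\kappa(\gamma))}$, the linear factor coming from the flow time, which is $\asymp\|\kappa_\theta\|$ by Theorem~\ref{thm:transverse have proj models}. The sum $\sum_{\gamma\in\Gamma_K}\phi(\kappa(\gamma))e^{-\delta\phi(\kappa(\gamma))}$ converges because $\delta>\delta^\phi(K)$ leaves an exponential margin. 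Uniform multiplicity (Lemma~\ref{lem:uniform multiplicity}) controls overcounting. Together with the finite mass on the compact $\Gamma_0K$, this gives $\|m_\phi\|<\infty$.

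\emph{Main obstacle.} The hardest step is Step 1. Deducing divergence from a strict gap requires an exact, not merely coarse, renewal identity at $\delta$, and the transverse setting lacks a genuine Markov coding. I would try to obtain it from ergodicity and conservativity considerations via Theorem~\ref{thm:consequence of erg dich}, applied to the induced first-return system on $K$.
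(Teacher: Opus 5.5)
\textbf{The gap is in Step 1.} Your Step 2 is sound once divergence type is known. It is the same excursion--tube estimate the paper uses in the proof of Lemma~\ref{after}. You do need that almost every orbit meets $\Gamma_0K$ in both time directions, which follows from ergodicity and conservativity only after divergence is established; uniform multiplicity is not needed for an upper bound.

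Step 1 fails for the following reasons.
\begin{enumerate}
\item Suppose you had a comparison $\sum_k (C^{-1}P(s))^k\lesssim\sum_{\gamma\in\Gamma_0}e^{-s\phi(\kappa(\rho(\gamma)))}\lesssim\sum_k (CP(s))^k$. Together with convergence for $s>\delta$ and divergence for $s<\delta$, this only gives $C^{-1}\leq P(\delta)\leq C$. That is compatible with both convergence and divergence at $s=\delta$: the factor $C^{k}$ erases exactly the information you need.
\item The lower bound is not even available. An arbitrary word in $\Gamma_K$ need not be realized by a geodesic concatenation of excursions, so only the upper bound holds, and an upper bound on the Poincar\'e series can never force divergence.
\item Neither proposed repair closes this. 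The ``transfer quantity equal to $1$'' is never constructed. Theorem~\ref{thm:consequence of erg dich}, and with it ergodicity and conservativity of any first-return system, assumes divergence type, so appealing to it here is circular.
\end{enumerate}

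\textbf{The missing idea.} Avoid the Poincar\'e series at $\delta$ altogether. Instead show that a $\delta$-dimensional $\phi$-Patterson--Sullivan measure is carried by the conical limit set. The ergodic dichotomy of \cite{CZZ2024} applies to any such measure without a divergence hypothesis, and its contrapositive (conical set of positive measure forces divergence type) then finishes. This is the standard route, going back to Dal'bo--Otal--Peign\'e for geometrically finite groups. A sketch:
\begin{enumerate}
\item Take $\mu=\lim_{s\searrow\delta}\mu_s$ from Patterson's construction: normalized point masses at $\gamma o$ with weights $h(\phi(\kappa(\rho(\gamma))))e^{-s\phi(\kappa(\rho(\gamma)))}$, where $h$ is slowly varying.
\item Every non-conical point lies in a $\Gamma_0$-translate of the set $Z$ of forward endpoints of flow lines that leave $K$ and never return to $\Gamma_0K$. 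Moreover, $Z$ has a neighbourhood in $\Gamma_0\sqcup\Lambda_\Omega(\Gamma_0)$ meeting $\Gamma_0$ only in elements whose first excursion from $K$ is long; your choice $K_1\subset\operatorname{int}K_2$ handles openness.
\item Write such $\gamma=\gamma_1\gamma'$ with $\gamma_1\in\Gamma_K$ the first excursion. Use coarse additivity of $\phi\circ\kappa\circ\rho$ along genuine geodesic concatenations, which is available, e.g., from Proposition~\ref{prop:constructing functions}. Then the sum over $\gamma'$ cancels the normalization. The neighbourhood has $\mu_s$-mass $\ll\sum_{\gamma_1\in\Gamma_K,\ \phi(\kappa(\rho(\gamma_1)))\geq N}e^{-(\delta-\eta)\phi(\kappa(\rho(\gamma_1)))}$ uniformly in $s$, and this tends to $0$ by the gap.
\item Hence $\mu(Z)=0$, and by quasi-invariance $\mu$ is carried by the conical set.
\end{enumerate}

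\textbf{How the paper does it.} The paper proves none of this by hand. It verifies the hypotheses of Wen's abstract GPS theorem \cite{Wen2026}:
\begin{enumerate}
\item $\phi\circ B_\theta$ is part of a continuous GPS system with magnitude $\phi(\kappa(\rho(\gamma)))$;
\item the Busemann cocycle $\beta_x(\gamma^{-1}o,o)$ defining the Hopf-coordinate action is expanding with magnitude $\dist_\Omega(o,\gamma o)$;
\item $\dist_\Omega(o,\gamma o)\leq C_\epsilon e^{\epsilon\phi(\kappa(\rho(\gamma)))}$.
\end{enumerate}
The last condition is where positivity on the limit cone enters, playing the same role as your linear factor in Step 2.
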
 
  
 \begin{proof} Wen's results are stated in the context of abstract GPS systems, and in Appendix~\ref{sec:appendix on Wen}, we state Wen's general result and explain why it implies the above theorem. \end{proof}

\section{Flow observables encoding Cartan displacement} \label{sec:constructing a function}
In this section,
we continue to assume that $\theta = \opp^* \theta$, that $\Gamma < \Gsf$ is a non-elementary $\theta$-transverse group, and that $(\Omega, \Gamma_0, \rho, \xi)$ is a projectively visible model  of $\Gamma$. The main construction of the paper is given  in Proposition~\ref{prop:constructing functions}, where we construct Cartan displacement observables on $\Gc$ whose orbit integrals
coarsely recode the Cartan displacement of recurrent geodesic flow. These observables will be a key ingredient in the proof of Theorem~\ref{thm:transverse main}.

\begin{proposition}[Cartan displacement observable] \label{prop:constructing functions}
There exists a $\Gamma_0$-invariant continuous bounded map
$$
\mathbf{f}:\Gc\to \mfa_\theta,
$$
 with the following
property:
For every compact set $K\subset\Gc$, there exists $C_K>0$ such
that, if $v\in K$, $\gamma\in\Gamma_0$, and
$g_Tv\in\gamma K$ for some $T\geq0$, then
$$
\left\|
\int_0^T\mathbf{f}(g_tv)\,dt
-
\kappa_\theta(\rho(\gamma))
\right\|
\leq C_K.
$$
\end{proposition}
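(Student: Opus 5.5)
We construct $\mathbf f$ as the flow derivative of a $\Gamma_0$-equivariant ``Cartan potential'' on $\Gc$, smoothed along the flow. The natural candidate is a partial Iwasawa cocycle based at the forward endpoint. For $v\in\Gc$ we choose a group element $h(v)\in\Gsf$ depending continuously on $v$ and with the following equivariance: $h(\gamma v)=\rho(\gamma)h(v)$ up to a bounded right error in $\Ksf$ or in a fixed compact set. We then set
$$
F(v):=B_\theta\bigl(h(v)^{-1},\xi(v^+)\bigr)\in\mfa_\theta ,
$$
or, more canonically, a Busemann-type function $\beta^\theta_{\xi(v^+)}(\cdot)$ evaluated along the flow line. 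The point is to get a $\mfa_\theta$-valued function with three properties. First, $F(\gamma v)-F(v)$ is a cocycle that depends only on $\gamma$ and $v^+$. Second, $F(g_tv)-F(v)$ is a bounded function of $t$ for bounded $t$, so it grows at most linearly. Third, for recurrent segments, $F(g_Tv)-F(v)$ is within a bounded distance of $\kappa_\theta(\rho(\gamma))$. We then define
$$
\mathbf f(v):=F(g_1v)-F(v).
$$
Because the $\gamma$-cocycle terms cancel in the difference, $\mathbf f$ is $\Gamma_0$-invariant. Equivalently, we can average: $\mathbf f(v)=\int_0^1 (F(g_{s+1}v)-F(g_sv))\,ds$, which improves regularity. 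Continuity of $\mathbf f$ follows from continuity of $h$ and of $B_\theta$. Boundedness follows from the Lipschitz-type estimate $\norm{\kappa_\theta(a^{-1}b)}\lesssim \dist_\Omega(a o,b o)+C$, which is available from Theorem~\ref{thm:transverse have proj models} locally, or which we impose in the construction of $h$.

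The concrete construction of $h$ goes as follows. The Hopf coordinates of $v$ give a transverse pair $(\xi(v^-),\xi(v^+))\in\Lambda^{(2)}_\theta(\Gamma)$. Since these points are transverse, there is $g\in\Gsf$ with $g\Psf_\theta=\xi(v^+)$ and $gw_0\Psf_\theta=\xi(v^-)$, unique up to the Levi factor $\Ksf_\theta\Asf$. We fix the $\Asf$-ambiguity as follows. First fix the $\mfa_\theta$-component by requiring a normalization of the Gromov product, using the basepoint-dependent Busemann parameter $t$ of $v$; concretely, translate by $\exp(t\,\mathbf u)$ for a fixed $\mathbf u$ and absorb the discrepancy into $F$. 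Then take a $\Gamma_0$-equivariant continuous choice of the remaining compact ambiguity via a partition of unity on the quotient $\Gamma_0\backslash\Gc$. Because $\Gamma_0$ acts properly on $\Gc$, such equivariant continuous local sections glue, up to a uniformly bounded error in $\Gsf$, into a global function. All that is needed is a quasi-equivariant map $\Phi:\Gc\to\Gsf/\Ksf$ that is coarsely Lipschitz along flow lines. Its displacement $F(v):=\kappa$-type quantity, measured along the $\xi(v^+)$-horofunction, is then a cocycle.

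The recurrence estimate has two parts. If $v\in K$ and $g_Tv\in\gamma K$, then $\gamma^{-1}g_Tv\in K$. By equivariance and compactness of $K$, the points $\Phi(v)$ and $\Phi(g_Tv)$ are within a $C_K$-bounded distance of $\Phi$-images of basepoints $o$ and $\rho(\gamma)o$. So we must show that $F(g_Tv)-F(v)=\int_0^T\mathbf f(g_tv)\,dt$ up to $O(1)$. This holds by telescoping, up to boundary terms over $[0,1]$ and $[T,T+1]$, which are bounded by boundedness of $F$-increments. It remains to show that this difference is $\kappa_\theta(\rho(\gamma))+O(1)$.

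That last comparison is the main obstacle. It requires a Busemann/Iwasawa-cocycle version of ``conical'' estimates: for $x=\xi(v^+)$ lying in the shadow of $\rho(\gamma)$, we need $B_\theta(\rho(\gamma)^{-1},x)=\kappa_\theta(\rho(\gamma))+O(1)$ (the standard estimate, cf.\ the proof of the Shadow Lemma, Proposition~\ref{prop:Shadow Lemma}, in \cite{CZZ2024}). The uniformity comes from transversality of $\xi(v^\pm)$ on the compact set $K$ together with the fact that $v^+\in\Oc_R(\gamma)$ for $R=R(K)$, via Lemma~\ref{obs:distance between rays}. For $\theta$-transverse groups this estimate is exactly what underlies Proposition~\ref{prop:Shadow Lemma}. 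I would first try to extract it directly from there, using $x\in\widehat\Oc_R(\rho(\gamma))$, and would regard establishing the equivariant continuous normalization of $F$ as the secondary technical step.
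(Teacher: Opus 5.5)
Your architecture matches the paper's. You use an $\mfa_\theta$-valued potential with exact twisted equivariance $F(\eta v)=F(v)-B_\theta(\rho(\eta),\xi(v^+))$; your $F(v)=B_\theta(h(v)^{-1},\xi(v^+))$ has this if $h(\eta v)\in\rho(\eta)h(v)\Ksf$, since $B_\theta(k,\cdot)=0$ for $k\in\Ksf$. Then $\mathbf f$ is a unit-time flow increment, you telescope, and you finish with the shadow estimate for the Iwasawa cocycle. That last estimate is simply cited in the paper from \cite[Lemma 7.3]{CZZ2024}. With the correct sign it reads $B_\theta(\rho(\gamma)^{-1},x)=-\kappa_\theta(\rho(\gamma))+O(1)$, so you want $\mathbf f(v)=F(v)-F(g_1v)$.

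The genuine gap is the step you call secondary: producing $F$ that is continuous, \emph{exactly} equivariant, and Lipschitz along flow lines \emph{uniformly on all of} $\Gc$. This is the main content of the proposition, and your sketch does not produce it, for three reasons.

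First, a quasi-equivariant $\Phi$, or an $h$ equivariant only up to a right error in a compact set other than $\Ksf$, leaves $F(\eta v)-F(v)+B_\theta(\rho(\eta),\xi(v^+))$ merely bounded. Then $\mathbf f$ is only coarsely invariant and does not descend to $\Gamma_0\backslash\Gc$.

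Second, the normalization does not work as described. For $\theta\neq\Delta$ the stabilizer of a transverse pair is a Levi subgroup with noncompact semisimple part, not $\Ksf_\theta\Asf$. Moreover, the Hopf parameter transforms by the real-valued Hilbert Busemann cocycle $\beta_{v^+}(\eta^{-1}o,o)$, not by $B_\theta(\rho(\eta),\xi(v^+))$. So the ``discrepancy absorbed into $F$'' is an equivariance defect of exactly the kind you are trying to remove.

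Third, $\Gamma_0\backslash\Gc$ is noncompact in general, and gluing local sections by a partition of unity gives no control of $\norm{F(g_1v)-F(v)}$ as $v$ leaves compact sets (e.g.\ into cusps). The bound $\norm{\kappa_\theta(\rho(\gamma))}\leq C_0\dist_\Omega(o,\gamma o)+C_0$ controls orbit points only, not interpolated values $h(v)$ when $\pi(v)$ is far from $\Gamma_0 o$. So boundedness of $\mathbf f$ is not established, yet it is used later for integrability against $m_\psi$ and in Theorem~\ref{thm:critical gap is stable}(2). Even an exactly equivariant barycentric map into $\Gsf/\Ksf$ would need a separate argument here.

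The paper's key idea is to smooth the Busemann \emph{values} rather than a point of $\Gsf/\Ksf$. For $\alpha\in\theta$ it sets
\[
\omega_\alpha(\mathbf Q(v))=\log\sum_{\gamma\in\Gamma_0}e^{-\lambda\dist_\Omega(\gamma o,\pi(v))}\,e^{\omega_\alpha(B_\theta(\rho(\gamma)^{-1},\xi(v^+)))},\qquad\lambda=d+a.
\]
Convergence comes from $\omega_\alpha(B_\theta(\rho(\gamma),x))\leq a\dist_\Omega(o,\gamma o)+b$ together with Tholozan's bound $\delta_\Omega(\Gamma_0)\leq d-2$. Reindexing the sum gives exact equivariance $\mathbf Q(\eta v)=\mathbf Q(v)-B_\theta(\rho(\eta),\xi(v^+))$. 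The exponents $\omega_\alpha(B_\theta(\rho(\gamma)^{-1},\xi(v^+)))$ depend only on $\gamma$ and $v^+$, so flowing for time $t$ changes each weight by a factor in $[e^{-\lambda\abs t},e^{\lambda\abs t}]$. This gives $\abs{\omega_\alpha(\mathbf Q(g_tv)-\mathbf Q(v))}\leq\lambda\abs t$ on all of $\Gc$, with no analysis of the thin part. Setting $\mathbf f(v):=\mathbf Q(v)-\mathbf Q(g_1v)$, your remaining steps then go through as you describe.
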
 
The associated additive Cartan displacement cocycle is
$$
\mathbf{F}(v,T)
:=
\int_0^T\mathbf{f}(g_tv)\,dt.
$$
It satisfies
$$
\mathbf{F}(v,S+T)
=
\mathbf{F}(v,S)+\mathbf{F}(g_Sv,T),
$$
and Proposition~\ref{prop:constructing functions} says that
$\mathbf{F}(v,T)$ coarsely records the partial Cartan displacement of
every orbit segment whose initial and terminal points lie over a fixed
compact part of the quotient.

For $\phi\in\mfa_\theta^*$, define the associated
\emph{$\phi$-Cartan displacement observable} by
$$
f_\phi:=\phi\circ\mathbf{f}.
$$
Then the map $(\phi,v)\mapsto f_\phi(v)$ is continuous, the map
$\phi\mapsto f_\phi$ is linear, and
$$
\sup_{v\in\Gc}\abs{f_\phi(v)}
\leq C_0\norm{\phi}
$$ where $C_0=\sup_{v\in\Gc}\|\mathbf{f}(v)\|$.
Moreover,
for every compact set $K\subset\Gc$, 
\begin{equation}\label{fphi}
\left|
\int_0^T f_\phi(g_tv)\,dt
-
\phi\bigl(\kappa_\theta(\rho(\gamma))\bigr)
\right|
\leq C_K\norm{\phi}
\end{equation}
whenever $v\in K$ and $g_Tv\in\gamma K$.

The function $f_\phi$ descends to a bounded continuous function
$$\hat f_\phi : \Gamma_0 \backslash \Gc \rightarrow \Rb.$$

Before proving Proposition~\ref{prop:constructing functions}, we give one application. 
Fix $\psi \in \mfa_{\theta}^*$ such that $(\Gamma,\psi)$ is of divergence type and $\norm{m_\psi}<+\infty$. Let $\mu_\psi$ denote the unique  $\delta^\psi(\Gamma)$-dimensional $\psi$-Patterson--Sullivan measure for $\Gamma$ on $\Lambda_\theta(\Gamma)$, given by Theorem~\ref{thm:consequence of erg dich}.

\begin{corollary} \label{cor:limit of ratios}
There exists a set $A \subset \Lambda_\theta(\Gamma)$ with full $\mu_\psi$-measure with the following property: if $x \in A$, $\{\gamma_n\} \subset \Gamma$ converges to $x$ conically, $\phi_1,\phi_2 \in \mfa_\theta^*$, and $\int \hat f_{\phi_2}\,dm_\psi \neq 0$, then 
$$
\lim_{n \rightarrow \infty} \frac{\phi_1(\kappa(\gamma_n))}{\phi_2(\kappa(\gamma_n))} = \frac{\int \hat f_{\phi_1} dm_\psi}{\int \hat f_{\phi_2} dm_\psi}.
$$
\end{corollary}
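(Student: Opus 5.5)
We combine the Birkhoff ergodic theorem for the finite, ergodic BMS measure $m_\psi$ (Theorem~\ref{thm:consequence of finite volume}) with the coarse identity \eqref{fphi}. Normalize $m_\psi$ to be a probability measure; ratios of integrals are unaffected. Since $\mathfrak a_\theta^*$ is finite dimensional and $\phi\mapsto \hat f_\phi$ is linear, it suffices to handle a basis $\phi^{(1)},\dots,\phi^{(k)}$ of $\mathfrak a_\theta^*$; the general $\phi_1,\phi_2$ then follow by linearity. Equivalently, we apply Birkhoff to the vector-valued bounded continuous observable $\hat{\mathbf f}$. This gives an $m_\psi$-conull set $E\subset\Gamma_0\backslash\Gc$ of vectors $w$ with
$$\frac1T\int_0^T \hat{\mathbf f}(g_t w)\,dt\longrightarrow \int \hat{\mathbf f}\,dm_\psi=:\bar{\mathbf f}\qquad (T\to+\infty).$$

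Next we transfer $E$ to the boundary. Lift $E$ to a $\Gamma_0$-invariant conull set $\tilde E\subset\Gc$. In Hopf coordinates $\tilde m_\psi$ is equivalent to $\mu_\psi\otimes\mu_{\psi,\opp}\otimes dt$. Birkhoff convergence for $v$ depends only on the forward endpoint $v^+$: two vectors with the same $v^+$ are forward asymptotic up to a time shift, by the Busemann parametrization and Lemma~\ref{obs:distance between rays}. Their $\mathbf f$-orbit integrals therefore differ by $o(T)$, using the uniform continuity of $\mathbf f$ near a recurrent compact set, or more simply by working along the full-measure set where Fubini applies. Fubini then gives a $\mu_\psi$-full set $A'\subset\Lambda_\Omega(\Gamma_0)$ such that every $v$ with $v^+\in A'$ (and $v^-$ in a full-measure set) satisfies the Birkhoff limit. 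We also intersect with the conical limit set, which has full measure by Theorem~\ref{thm:consequence of erg dich}(4), and set $A:=\xi(A')$.

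Now fix $x\in A$ and a sequence $\{\gamma_n\}$ converging conically to $x$. Pull back to $\Gamma_0$ and use Proposition~\ref{prop:char of conical}(3): there is $R$ with $\dist_\Omega(\gamma_n o,[o,x))<R$. Pick $v\in\Gc$ with $v^+=x$ and basepoint near $o$. Then there are times $T_n\to\infty$ with $\pi(g_{T_n}v)$ within $R'$ of $\gamma_n o$. The main obstacle is producing a single compact $K\subset\Gc$ with $v\in K$ and $g_{T_n}v\in\gamma_n K$ for all $n$, as Proposition~\ref{prop:constructing functions} requires. The difficulty is that $K$ needs control on the backward endpoint of $\gamma_n^{-1}g_{T_n}v$, which is $\gamma_n^{-1}v^-$. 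I would handle this with the convergence-group form of conicality (Proposition~\ref{prop:char of conical}(1)): $\gamma_n^{-1}x\to a$ and $\gamma_n^{-1}v^-\to b\neq a$. Hence $(\gamma_n^{-1}v^-,\gamma_n^{-1}x)$ stays in a compact subset of $\Lambda^{(2)}_\Omega(\Gamma_0)$. Together with the bounded Busemann coordinate, this means the vectors $\gamma_n^{-1}g_{T_n}v$ lie in a fixed compact $K$, which we enlarge to contain $v$.

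With $K$ in hand, Proposition~\ref{prop:constructing functions} gives $\|\mathbf F(v,T_n)-\kappa_\theta(\rho(\gamma_n))\|\le C_K$. We also have $T_n\to\infty$ and $\mathbf F(v,T_n)/T_n\to\bar{\mathbf f}$. Hence $\kappa_\theta(\rho(\gamma_n))/T_n\to\bar{\mathbf f}$, so $\phi_i(\kappa(\gamma_n))/T_n\to\int\hat f_{\phi_i}\,dm_\psi$ for $i=1,2$. Since the limit for $\phi_2$ is nonzero, dividing yields the claimed ratio limit. Note that $\phi(\kappa(\gamma))=\phi(\kappa_\theta(\gamma))$ for $\phi\in\mathfrak a_\theta^*$.
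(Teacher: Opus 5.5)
Your proof is correct and essentially the paper's: Birkhoff for the finitely many $\hat f_{\omega_\alpha}$ (equivalently the vector observable), a Fubini transfer of the conull set to forward endpoints, conicality to produce a compact $K$ and times $T_n\to+\infty$ with $g_{T_n}v\in\gamma_n K$, and then \eqref{fphi}. The forward-asymptoticity remark can be dropped, since the Fubini route alone suffices. Your backward-endpoint concern is also unnecessary, because $\{w\in\Gc:\dist_\Omega(\pi(w),o)\le R\}$ is already compact, although your convergence-group argument works too.
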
 

\begin{proof} By Theorem~\ref{thm:consequence of finite volume}, the geodesic flow on $(\Gamma_0 \backslash \Gc, m_\psi)$ is ergodic. Fix a set $A_0 \subset \Gamma_0 \backslash \Gc$ of full $m_\psi$-measure such that the Birkhoff ergodic theorem holds on $A_0$ for each function in the finite set $\{ \hat f_{\omega_\alpha}\}_{\alpha \in \theta}$. By linearity of $\phi \mapsto \hat f_\phi$,  the Birkhoff ergodic theorem holds on $A_0$ for each function in $\{ \hat f_\phi\}_{\phi \in \mfa_\theta^*}$.

Let $A_0' \subset \Gc$ denote the lift of $A_0$. Then $A_0'$ has full $\tilde m_\psi$-measure and hence 
$$
A := \{ x \in \Lambda_\theta(\Gamma) : x = \xi(v^+) \text{ for some } v \in A_0'\}
$$
has full $\mu_\psi$-measure. 

Now fix $x \in A$, a sequence $\{\gamma_n\} \subset \Gamma$ converging conically to $x$, and $\phi_1,\phi_2 \in \mfa_\theta^*$ such that $\int \hat f_{\phi_2}\,dm_\psi \neq 0$. 

Fix $v \in A_0'$ such that $x = \xi(v^+)$. Let $\eta_n := \rho^{-1}(\gamma_n)$. Since $\xi : \Lambda_\Omega(\Gamma_0) \rightarrow \Lambda_\theta(\Gamma)$ is a homeomorphism, $\{\eta_n\} \subset \Gamma_0$ converges conically to $v^+$. Then, by Proposition~\ref{prop:char of conical}, there exist a compact subset $K \subset \Gc$ and a sequence $\{t_n\} \subset [0,+\infty)$ with $t_n \to +\infty$ such that $g_{t_n} v \in \eta_n K$ for all $n \geq 1$. Then, by the Birkhoff ergodic theorem and \eqref{fphi},
\begin{equation*}
\frac{\int \hat f_{\phi_1} dm_\psi}{\int \hat f_{\phi_2} dm_\psi}=\lim_{n \rightarrow \infty} \frac{ \int_0^{t_n} f_{\phi_1}(g_tv) dt}{ \int_0^{t_n} f_{\phi_2}(g_{t}v) dt}
=\lim_{n \rightarrow \infty} \frac{\phi_1(\kappa(\gamma_n))}{\phi_2(\kappa(\gamma_n))}. \qedhere \end{equation*} 
\end{proof} 

\begin{remark} 
We remark that when $\norm{m_{\psi}} = + \infty$, one may try to apply the Hopf ratio ergodic theorem to prove a similar result. However, in this case, $\hat f_{\phi}$ may not be in $L^1(\Gamma_0 \backslash \Gc, m_{\psi})$ and hence the Hopf ratio ergodic theorem may not apply.
\end{remark}

Next, we verify that linear forms that are positive on the $\theta$-limit cone have positive integrals. 

\begin{lemma} \label{obs:positive integral}
   If $\phi \in \mfa_\theta^*$ is positive on $\mathcal{L}_{\theta}(\Gamma) \smallsetminus \{0\}$,  then 
$$
\int \hat f_{\phi} dm_\psi > 0.
$$
\end{lemma}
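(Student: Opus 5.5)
We argue through the Birkhoff ergodic theorem along a generic recurrent orbit, as in the proof of Corollary~\ref{cor:limit of ratios}. Since $m_\psi$ is finite and the geodesic flow on $(\Gamma_0\backslash\Gc,m_\psi)$ is ergodic by Theorem~\ref{thm:consequence of finite volume}, we may normalize $m_\psi$ to be a probability measure; positivity of the integral is unaffected. Fix a compact set $\hat K\subset\Gamma_0\backslash\Gc$ with $m_\psi(\hat K)>0$, and let $K\subset\Gc$ be a compact set projecting onto $\hat K$. By Birkhoff (and Poincaré recurrence), for $m_\psi$-a.e.\ $\hat v\in\hat K$ we have
$$
\frac1T\int_0^T\hat f_\phi(g_t\hat v)\,dt\to\int\hat f_\phi\,dm_\psi ,
$$
and the forward orbit returns to $\hat K$ at times $t_n\to+\infty$. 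Lifting $\hat v$ to $v\in K$, we obtain elements $\eta_n\in\Gamma_0$ with $g_{t_n}v\in\eta_nK$. These $\eta_n$ are eventually distinct, since $\Gamma_0$ acts properly on $\Gc$ and $t_n\to\infty$.

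By \eqref{fphi},
$$
\int_0^{t_n}f_\phi(g_tv)\,dt=\phi(\kappa_\theta(\rho(\eta_n)))+O(C_K\norm{\phi}).
$$
Hence
$$
\int\hat f_\phi\,dm_\psi=\lim_n\frac{\phi(\kappa_\theta(\rho(\eta_n)))}{t_n}.
$$
It remains to bound $\phi(\kappa_\theta(\rho(\eta_n)))$ below linearly in $t_n$.

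Since $\phi>0$ on $\mathcal L_\theta(\Gamma)\smallsetminus\{0\}$ and $\mathcal L_\theta(\Gamma)$ is the asymptotic cone of $\kappa_\theta(\Gamma)$, a standard compactness argument gives $c>0$ and $C\ge0$ with
$$
\phi(\kappa_\theta(\gamma))\ge c\norm{\kappa_\theta(\gamma)}-C\quad\text{for all }\gamma\in\Gamma .
$$
Indeed, if this failed, a subsequence of $\kappa_\theta(\gamma_k)/\norm{\kappa_\theta(\gamma_k)}$ would converge to a unit vector $u\in\mathcal L_\theta(\Gamma)$ with $\phi(u)\le0$.

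Next, Theorem~\ref{thm:transverse have proj models} gives
$$
\norm{\kappa_\theta(\rho(\eta_n))}\ge C_0^{-1}\dist_\Omega(o,\eta_no)-C_0 .
$$
Because $v\in K$ and $g_{t_n}v\in\eta_nK$, the base points satisfy $\dist_\Omega(\pi(v),o)\le D$ and $\dist_\Omega(\pi(g_{t_n}v),\eta_no)\le D$ for a constant $D$ depending only on $K$. The point $\pi(g_{t_n}v)$ lies at Hilbert distance $t_n$ from $\pi(v)$ along a geodesic, so
$$
\dist_\Omega(o,\eta_no)\ge t_n-2D .
$$
Combining these estimates,
$$
\phi(\kappa_\theta(\rho(\eta_n)))\ge cC_0^{-1}t_n-C' ,
$$
and therefore $\int\hat f_\phi\,dm_\psi\ge cC_0^{-1}>0$.

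The step needing the most care is the comparison between flow time and Hilbert displacement. One must check that the Hopf-coordinate flow parameter really is unit-speed Hilbert arclength on the base. This holds because the Busemann coordinate $\beta_{v^+}(o,\pi(v))$ increases by exactly $t$ along $g_t$, and the base curve is a projective-line geodesic. One must also check that compactness of $K$ bounds the distance from base points to $o$. The remaining steps are routine.
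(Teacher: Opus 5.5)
Your proof is correct and follows essentially the same route as the paper: Birkhoff along a generic orbit that returns to a compact set, \eqref{fphi} to replace the orbit integral by $\phi(\kappa_\theta(\rho(\eta_n)))$, and then positivity on the limit cone together with the ``moreover'' part of Theorem~\ref{thm:transverse have proj models} and the comparison $\dist_\Omega(o,\eta_n o)\approx t_n$. The only difference is how you get the returns to a compact set: you use Poincar\'e recurrence for the finite measure $m_\psi$, whereas the paper uses $\mu_\psi$-a.e.\ conicality and Proposition~\ref{prop:char of conical}; both work equally well.
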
  

\begin{proof} Since $\mu_\psi$-a.e.\ point in $\Lambda_\theta(\Gamma)$ is conical (see Theorem~\ref{thm:consequence of erg dich}), arguing as in the preceding proof, we can fix $v \in \Gc$ such that 
$$ 
\lim_{T \rightarrow + \infty} \frac{1}{T} \int_0^T f_\phi(g_t v) dt = \frac{1}{\norm{m_{\psi}}} \int \hat f_{\phi} dm_\psi 
$$
and there exist a compact subset $K \subset \Gc$, a sequence $\{t_n\} \subset [0,+ \infty)$ with $t_n \to + \infty$, and a sequence $\{\eta_n\} \subset \Gamma_0$ such that $g_{t_n} v \in \eta_n K$ for all $n \geq 1$. Then there exists $C_1 > 0$ such that 
$$
\abs{ \dist_\Omega(o,\eta_n o)-t_n} \leq C_1. 
$$
Then by \eqref{fphi}, 
$$
\lim_{n \rightarrow \infty}  \frac{\phi(\kappa(\rho(\eta_n)))}{ \dist_\Omega(o,\eta_n o)}= \frac{1}{\norm{m_{\psi}}}  \int \hat f_{\phi} dm_\psi.
$$
Since $\phi$ is positive on $\mathcal{L}_{\theta}(\Gamma) \smallsetminus \{0\}$,  there exists $C_2 > 1$ such that 
$$
\phi(\kappa_\theta(\rho(\eta_n))) \geq C_2^{-1} \norm{\kappa_\theta(\rho(\eta_n))} - C_2
$$
and so  the ``moreover'' part of Theorem~\ref{thm:transverse have proj models} implies positivity of the integral. 
\end{proof}

\subsection{Proof of Proposition~\ref{prop:constructing functions}}

We begin by bounding the partial Iwasawa cocycle in terms of the
Hilbert distance.

\begin{lemma}\label{lem:growth estimate on Iwasawa}
There exist $a,b>0$ such that, for every $\alpha\in\theta$,
$x\in\Fc_\theta$, and $\gamma\in\Gamma_0$,
$$
\omega_\alpha\bigl(B_\theta(\rho(\gamma),x)\bigr)
\leq
a\,\dist_\Omega(o,\gamma o)+b.
$$
\end{lemma}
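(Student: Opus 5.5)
We bound the Iwasawa cocycle by the Cartan projection, and then convert the Cartan bound into a Hilbert-distance bound using the ``moreover'' part of Theorem~\ref{thm:transverse have proj models}.

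\emph{Step 1: a Cartan bound.} We first show that for every $g\in\Gsf$, $x\in\Fc_\theta$ and $\alpha\in\theta$,
$$
\omega_\alpha\bigl(B_\theta(g,x)\bigr)\leq \omega_\alpha\bigl(\kappa(g)\bigr).
$$
By \eqref{eqn:defn of pi_theta to mfa_theta} and \eqref{eqn:defn of BIW}, the left side equals $\omega_\alpha(B_\Delta(g,\tilde x))$ for any lift $\tilde x\in\Fc_\Delta$ of $x$. We use a proximal irreducible representation $(\tau_\alpha,V_\alpha)$ of $\Gsf$ whose highest weight is a positive multiple $m\,\omega_\alpha$. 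We equip $V_\alpha$ with a $\Ksf$-invariant Euclidean norm for which $\Asf$ acts by self-adjoint operators. Let $v_0$ be a unit highest weight vector, fixed up to sign by $\Psf_\Delta$ acting through its $\Asf$-component. Writing $\tilde x=k\Psf_\Delta$, we have:
\begin{itemize}
\item $m\,\omega_\alpha(B_\Delta(g,\tilde x))=\log\|\tau_\alpha(g)\tau_\alpha(k)v_0\|$, since $\Nsf$ fixes $v_0$ and $\Ksf$ preserves the norm;
\item $m\,\omega_\alpha(\kappa(g))=\log\|\tau_\alpha(g)\|$, the operator norm.
\end{itemize}
Since $\tau_\alpha(k)v_0$ is a unit vector, this gives $\omega_\alpha(B_\Delta(g,\tilde x))\le\omega_\alpha(\kappa(g))$, which is exactly the standard inequality.

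\emph{Step 2: passing to $\kappa_\theta$ and the Hilbert distance.} Because $\alpha\in\theta$, we have $\omega_\alpha(\kappa(g))=\omega_\alpha(\kappa_\theta(g))$, and $\omega_\alpha$ is a linear functional on $\mfa_\theta$. Hence
$$
\omega_\alpha\bigl(\kappa_\theta(g)\bigr)\leq \|\omega_\alpha\|\,\|\kappa_\theta(g)\|.
$$
Applying this with $g=\rho(\gamma)$ and using the upper bound in Theorem~\ref{thm:transverse have proj models},
$$
\|\kappa_\theta(\rho(\gamma))\|\leq C_0\,\dist_\Omega(o,\gamma o)+C_0 .
$$
We then take $a:=C_0\max_{\alpha\in\theta}\|\omega_\alpha\|$ and $b:=a$, which are uniform over $x$, $\gamma$ and $\alpha$, as required. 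The uniformity in $x$ is automatic because the bound in Step~1 holds for every unit vector, and in particular for every point of $\Fc_\theta$, not only for points of the limit set.

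\emph{Main obstacle.} The only delicate point is the representation-theoretic identity in Step~1. One must choose a norm in which $\Ksf$ acts isometrically and $\Asf$ acts diagonally, and check that $\Nsf$ fixes the highest weight line. This is standard (Quint, Benoist) but requires the convention that $B_\Delta$ is defined via the $\Ksf\Asf\Nsf$ decomposition, matching the choice of $\Nsf$ as the unipotent radical of $\Psf_\Delta$. If needed, we would instead cite directly the known inequality $\|B_\theta(g,x)\|\ll\|\kappa_\theta(g)\|$ from \cite{Quint_PS}, and then run Step~2 unchanged.
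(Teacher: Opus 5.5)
Your proof is correct and follows the paper's route: bound $\omega_\alpha(B_\theta(h,x))$ by a multiple of $\omega_\alpha(\kappa_\theta(h))$, then use $\omega_\alpha(\kappa_\theta(h))\le\norm{\omega_\alpha}\norm{\kappa_\theta(h)}$ and the ``moreover'' part of Theorem~\ref{thm:transverse have proj models}. The only difference is that the paper cites \cite[Lemma 4.1]{KZ_vector} for the first inequality (with some constant $C_\alpha$), whereas you prove it directly, with constant $1$, using an irreducible representation of highest weight $m\,\omega_\alpha$.
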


\begin{proof}
For every $\alpha\in\theta$, there exists $C_\alpha>0$ such that
$$
\omega_\alpha\bigl(B_\theta(h,x)\bigr)
\leq
C_\alpha\,
\omega_\alpha\bigl(\kappa_\theta(h)\bigr)
$$
for every $h\in\Gsf$ and $x\in\Fc_\theta$; see, for instance,
\cite[Lemma 4.1]{KZ_vector}. Hence
$$
\omega_\alpha\bigl(B_\theta(\rho(\gamma),x)\bigr)
\leq
C_\alpha\norm{\omega_\alpha}
\norm{\kappa_\theta(\rho(\gamma))}.
$$
Since $\theta$ is finite, the constants on the right-hand side may be
chosen uniformly in $\alpha$. The desired estimate now follows from the
``moreover'' part of
Theorem~\ref{thm:transverse have proj models}.
\end{proof}

Let $d$ be the dimension of the ambient vector space containing
$\Omega$, so that $\Omega\subset\Pb(\Rb^d)$, and set
$$
\lambda:=d+a,
$$
where $a$ is given by
Lemma~\ref{lem:growth estimate on Iwasawa}. For every
$\alpha\in\theta$ and $v\in\Gc$, define
$$
P_\alpha(v)
:=
\sum_{\gamma\in\Gamma_0}
e^{-\lambda\dist_\Omega(\gamma o,\pi(v))}
e^{\omega_\alpha\left(
B_\theta(\rho(\gamma)^{-1},\xi(v^+))
\right)},
$$
where $\pi:T^1\Omega\to\Omega$ is the basepoint projection.

We first verify that this series converges locally uniformly. Let
$L\subset\Gc$ be compact and set
$$
R_L:=\sup_{v\in L}\dist_\Omega(o,\pi(v)).
$$
For $v\in L$, the triangle inequality and
Lemma~\ref{lem:growth estimate on Iwasawa}, applied to $\gamma^{-1}$,
give
\begin{align*}
&e^{-\lambda\dist_\Omega(\gamma o,\pi(v))}
e^{\omega_\alpha\left(
B_\theta(\rho(\gamma)^{-1},\xi(v^+))
\right)}
\\
&\qquad\leq
e^{\lambda R_L+b}
e^{-(\lambda-a)\dist_\Omega(o,\gamma o)}
=
e^{\lambda R_L+b}
e^{-d\dist_\Omega(o,\gamma o)}.
\end{align*}
By Theorem~\ref{thm:tholozan},
$$
\sum_{\gamma\in\Gamma_0}
e^{-d\dist_\Omega(o,\gamma o)}
<+\infty.
$$
It follows that $P_\alpha$ is positive and continuous.

We next record the equivariance of $P_\alpha$. If $\eta\in\Gamma_0$,
then the cocycle identity, the $\rho$-equivariance of $\xi$, and the
change of variables $\gamma\mapsto\eta^{-1}\gamma$ give
\begin{align*}
&e^{\omega_\alpha\left(
B_\theta(\rho(\eta),\xi(v^+))
\right)}
P_\alpha(\eta v)
\\
&=
\sum_{\gamma\in\Gamma_0}
e^{-\lambda\dist_\Omega(\gamma o,\eta\pi(v))}
e^{\omega_\alpha\left(
B_\theta(\rho(\gamma)^{-1},\xi(\eta v^+))
+
B_\theta(\rho(\eta),\xi(v^+))
\right)}
\\
&=
\sum_{\gamma\in\Gamma_0}
e^{-\lambda\dist_\Omega(\eta^{-1}\gamma o,\pi(v))}
e^{\omega_\alpha\left(
B_\theta(\rho(\eta^{-1}\gamma)^{-1},\xi(v^+))
\right)}
\\
&=
P_\alpha(v).
\end{align*}
Thus
\begin{equation}\label{eqn:equiv of P}
e^{\omega_\alpha\left(
B_\theta(\rho(\eta),\xi(v^+))
\right)}
P_\alpha(\eta v)
=
P_\alpha(v).
\end{equation}

Since the basepoint of $g_tv$ is at Hilbert distance $\abs{t}$ from the
basepoint of $v$, we also have
\begin{equation}\label{eqn:bounds on P}
e^{-\lambda\abs{t}}P_\alpha(v)
\leq
P_\alpha(g_tv)
\leq
e^{\lambda\abs{t}}P_\alpha(v)
\end{equation}
for every $v\in\Gc$ and $t\in\Rb$.

Since
$\{\omega_\alpha|_{\mfa_\theta}\}_{\alpha\in\theta}$ is a basis of
$\mfa_\theta^*$, there exists a unique continuous map
$$
\mathbf Q:\Gc\to \mfa_\theta
$$
such that for every $\alpha\in\theta$,
\begin{equation}\label{eqn:defining Q}
\omega_\alpha\bigl(\mathbf Q(v)\bigr)
=
\log P_\alpha(v).
\end{equation}

By \eqref{eqn:equiv of P}, we have
\begin{equation}\label{eqn:equivariance of Q}
\mathbf Q(\eta v)
=
\mathbf Q(v)
-
B_\theta\bigl(\rho(\eta),\xi(v^+)\bigr)
\end{equation}
for every $\eta\in\Gamma_0$ and $v\in\Gc$.

Choose $c_\theta>0$ such that for every $H\in\mfa_\theta$,
$$
\norm{H}
\leq
c_\theta
\max_{\alpha\in\theta}
\abs{\omega_\alpha(H)}.
$$
By \eqref{eqn:bounds on P},
$$
\abs{
\omega_\alpha\bigl(\mathbf Q(g_tv)-\mathbf Q(v)\bigr)
}
\leq
\lambda\abs{t}.
$$
Consequently,
\begin{equation}\label{eqn:flow bound Q}
\norm{\mathbf Q(g_tv)-\mathbf Q(v)}
\leq
C_0\abs{t},
\qquad
C_0:=c_\theta\lambda.
\end{equation}

We now define
$$
\mathbf f(v)
:=
\mathbf Q(v)-\mathbf Q(g_1v).
$$
The map $\mathbf f$ is continuous, and
\eqref{eqn:flow bound Q} gives
$$
\sup_{v\in\Gc}\norm{\mathbf f(v)}
\leq
C_0.
$$
Moreover, since the $\Gamma_0$-action commutes with the geodesic flow
and $(g_1v)^+=v^+$, \eqref{eqn:equivariance of Q} gives
\begin{align*}
\mathbf f(\eta v)
&=
\mathbf Q(\eta v)-\mathbf Q(g_1\eta v)
\\
&=
\mathbf Q(v)
-
B_\theta\bigl(\rho(\eta),\xi(v^+)\bigr)
-
\mathbf Q(g_1v)
+
B_\theta\bigl(\rho(\eta),\xi((g_1v)^+)\bigr)
\\
&=
\mathbf f(v).
\end{align*}
Thus $\mathbf f$ is $\Gamma_0$-invariant.

It remains to verify the Cartan displacement estimate. First observe
that, for every $T\geq0$,
\begin{align*}
\int_0^T\mathbf f(g_tv)\,dt
&=
\int_0^T
\left(
\mathbf Q(g_tv)-\mathbf Q(g_{t+1}v)
\right)\,dt
=
\int_0^1\mathbf Q(g_tv)\,dt
-
\int_T^{T+1}\mathbf Q(g_tv)\,dt.
\end{align*}
Hence, by \eqref{eqn:flow bound Q},
\begin{equation}\label{eqn:integral versus Q}
\left\|
\int_0^T\mathbf f(g_tv)\,dt
-
\bigl(\mathbf Q(v)-\mathbf Q(g_Tv)\bigr)
\right\|
\leq
C_0.
\end{equation}

Now fix a compact set $K\subset\Gc$, and suppose that
$v\in K$ and $g_Tv\in\gamma K$. Then
$\gamma^{-1}g_Tv\in K$. Applying
\eqref{eqn:equivariance of Q} with $\eta=\gamma^{-1}$ gives
$$
\mathbf Q(\gamma^{-1}g_Tv)
=
\mathbf Q(g_Tv)
-
B_\theta\bigl(\rho(\gamma)^{-1},\xi(v^+)\bigr).
$$
It follows that
\begin{align}
&\left\|
\mathbf Q(v)-\mathbf Q(g_Tv)
+
B_\theta\bigl(\rho(\gamma)^{-1},\xi(v^+)\bigr)
\right\| =
\left\|
\mathbf Q(v)-\mathbf Q(\gamma^{-1}g_Tv)
\right\|
\leq
2\sup_{w\in K}\norm{\mathbf Q(w)}.
\label{eqn:compact Q error}
\end{align}

Set
$$
D
:=
1+
\operatorname{diam}
\left(
\{o\}\cup\pi(K)
\right),
$$
where the diameter is taken with respect to $\dist_\Omega$. Since
$\gamma^{-1}g_Tv\in K$,
$$
\dist_\Omega(\gamma o,\pi(g_Tv))
=
\dist_\Omega(o,\pi(\gamma^{-1}g_Tv))
<
D.
$$
Moreover, by Lemma~\ref{obs:distance between rays}, the point
$\pi(g_Tv)\in[\pi(v),v^+)$ is within distance $D$ of the ray
$[o,v^+)$. Therefore,
$$
B_\Omega(\gamma o,2D)\cap[o,v^+)
\neq\emptyset.
$$
By \cite[Lemma 7.3]{CZZ2024}, there exists $C_D>0$, depending only on
$D$, such that
\begin{equation}\label{eqn:shadow Iwasawa estimate}
\left\|
B_\theta\bigl(\rho(\gamma)^{-1},\xi(v^+)\bigr)
+
\kappa_\theta(\rho(\gamma))
\right\|
\leq
C_D.
\end{equation}

Combining \eqref{eqn:compact Q error} and
\eqref{eqn:shadow Iwasawa estimate}, we obtain
$$
\left\|
\mathbf Q(v)-\mathbf Q(g_Tv)
-
\kappa_\theta(\rho(\gamma))
\right\|
\leq
2\sup_{w\in K}\norm{\mathbf Q(w)}
+
C_D.
$$
Finally, \eqref{eqn:integral versus Q} yields
$$
\left\|
\int_0^T\mathbf f(g_tv)\,dt
-
\kappa_\theta(\rho(\gamma))
\right\|
\leq
C_0
+
2\sup_{w\in K}\norm{\mathbf Q(w)}
+
C_D.
$$
This proves the proposition with
$$
C_K
:=
C_0
+
2\sup_{w\in K}\norm{\mathbf Q(w)}
+
C_D. 
$$
\qed

\section{Smooth points of the Manhattan hypersurface}\label{sec:smooth points}

 Suppose that $\theta = \opp^* \theta$ and that $\Gamma < \Gsf$ is a non-elementary $\theta$-transverse group. Fix $\alpha_0 \in \theta$ and set $\theta_0 := \theta \smallsetminus \{\alpha_0\}$ so that we have a subspace $\mfa_{\theta_0}^* \subset \mfa_{\theta}^*$. We also set $\psi_0 := \sum_{\alpha \in \theta} \omega_{\alpha}$, which is positive on $\mfa_{\theta}^+$. Notice that 
 $$
 \mfa_{\theta_0}^* \oplus \Rb \cdot \psi_0 = \mfa_\theta^*.
 $$
 Since $\Qc_{\theta}(\Gamma)$ is convex and $\psi_0 > 0$ on $\mfa_{\theta}^+ \smallsetminus \{0\}$, there exists  a convex function $\Phi : \mfa_{\theta_0}^* \to \Rb$ so that 
$$
\partial \Qc_{\theta}(\Gamma)= \{ u + \Phi(u)\psi_0 : u \in \mfa_{\theta_0}^* \}.  
$$
In other words, for each $u \in \mfa_{\theta_0}^*$, $\Phi(u)$ is the abscissa of convergence, in the parameter $s$, of the series
$$
\sum_{\gamma \in \Gamma} e^{-(u + s \psi_0)(\kappa(\gamma))}.
$$
Since $\Phi$ is convex, it is differentiable almost everywhere, and we prove the following.

\begin{proposition} \label{prop:derivative}
  Suppose $\Phi$ is differentiable at $u \in \mfa_{\theta_0}^*$, $\phi := u + \Phi(u)\psi_0$, and $\mu_\phi$ is a $1$-dimensional $\phi$-Patterson--Sullivan measure for $\Gamma$ on $\Lambda_{\theta}(\Gamma)$.

There exists a set $A \subset \Lambda_\theta(\Gamma)$ with full $\mu_\phi$-measure with the following property: if $x \in A$, $\{\gamma_n\} \subset \Gamma$ converges to $x$ conically, and $\psi \in \mfa_{\theta_0}^*$, then 
$$
\lim_{n \rightarrow \infty} \frac{\psi(\kappa(\gamma_n))}{\psi_0(\kappa(\gamma_n))} = - \left. \frac{d}{dt} \right|_{t=0} \Phi( u +  t \psi).
$$
\end{proposition}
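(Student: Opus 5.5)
The plan is a covering argument with the Shadow Lemma. Conical sequences with the ``wrong'' ratio force $\mu_\phi$-mass onto shadows of elements on which a nearby point of $\partial\Qc_\theta(\Gamma)$ has convergent Poincaré series. The differentiability of $\Phi$ at $u$ supplies those nearby points.

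\textbf{Setup.} Set $c_\psi := -\frac{d}{dt}\big|_{t=0}\Phi(u+t\psi)$. Since $\Phi$ is differentiable at $u$, the map $\psi\mapsto c_\psi$ is linear on $\mfa_{\theta_0}^*$. For fixed $\gamma_n$ the ratio $\psi(\kappa(\gamma_n))/\psi_0(\kappa(\gamma_n))$ is also linear in $\psi$. So it suffices to treat $\psi$ ranging over a finite basis $\{\psi_1,\dots,\psi_k\}$ of $\mfa_{\theta_0}^*$, and then take a countable intersection of full-measure sets. Fix such a $\psi$ and $c=c_\psi$. For $R>0$ large (as in Proposition~\ref{prop:Shadow Lemma}) and rational $\varepsilon>0$, let $E^-_{R,\varepsilon}$ be the set of $x\in\Lambda_\theta(\Gamma)$ admitting a sequence of distinct $\gamma_n$ with $x\in\widehat\Oc_R(\gamma_n)$ for all $n$ and $\psi(\kappa(\gamma_n))\le (c-\varepsilon)\psi_0(\kappa(\gamma_n))$. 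Define $E^+_{R,\varepsilon}$ analogously with $\ge (c+\varepsilon)$. By Proposition~\ref{prop:char of conical}, every $x$ that has a conical sequence with $\liminf$ (resp.\ $\limsup$) of the ratio below $c$ (resp.\ above $c$) lies in some $E^\pm_{R,\varepsilon}$, after passing to a subsequence. Hence it suffices to show $\mu_\phi(E^\pm_{R,\varepsilon})=0$, and then take $A$ to be the complement of the countable union.

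\textbf{Key estimate for $E^-$.} Take $t>0$ small and set $\phi_t:=u+t\psi+\Phi(u+t\psi)\psi_0\in\partial\Qc_\theta(\Gamma)$, so $\sum_\gamma e^{-s\phi_t(\kappa(\gamma))}<\infty$ for every $s>1$. Differentiability gives $\Phi(u+t\psi)-\Phi(u)=-ct+o(t)$, hence
$$\phi-\phi_t=-t\psi+(ct-o(t))\psi_0.$$
For every $\gamma$ in the bad set $S^-:=\{\gamma:\psi(\kappa(\gamma))\le(c-\varepsilon)\psi_0(\kappa(\gamma))\}$ this yields
$$\phi(\kappa(\gamma))-\phi_t(\kappa(\gamma))\ge \bigl(t\varepsilon-o(t)\bigr)\psi_0(\kappa(\gamma))\ge \tfrac{t\varepsilon}{2}\psi_0(\kappa(\gamma)).$$
Since $\kappa_\theta(\gamma)\in\mfa_\theta^+$ and $\psi_0>0$ on $\mfa_\theta^+\smallsetminus\{0\}$, we have $\phi_t(\kappa(\gamma))\le M\psi_0(\kappa(\gamma))$ with $M$ uniform for $t\in[0,1]$. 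Choosing $\eta:=t\varepsilon/(2M)$ gives
$$\phi(\kappa(\gamma))\ge(1+\eta)\,\phi_t(\kappa(\gamma))\qquad\text{for all }\gamma\in S^-.$$

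\textbf{Covering.} For every $N$, the set $E^-_{R,\varepsilon}$ is covered by $\bigcup\{\widehat\Oc_R(\gamma):\gamma\in S^-,\ \norm{\kappa_\theta(\gamma)}\ge N\}$, because the $\gamma_n$ are distinct and $\Gamma$ is $\theta$-discrete. By the Shadow Lemma with $\delta=1$,
$$\mu_\phi(E^-_{R,\varepsilon})\le C(R)\sum_{\gamma\in S^-,\ \norm{\kappa_\theta(\gamma)}\ge N} e^{-(1+\eta)\phi_t(\kappa(\gamma))}.$$
This is the tail of a convergent series, so it tends to $0$ as $N\to\infty$. The case $E^+$ is identical, using $t<0$.

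\textbf{Expected obstacle.} The delicate point is making the inequality $\phi\ge(1+\eta)\phi_t$ hold uniformly on the bad set $S^-$. This needs the comparison $\phi_t\ll\psi_0$ on $\kappa_\theta(\Gamma)$, and it must absorb the $o(t)$ term from differentiability by fixing $t$ small only after $\varepsilon$ is chosen. One must also check that the Shadow Lemma applies to $\mu_\phi$ with $\delta=1$ for fixed large $R$. Here this follows from $\mu_\phi$ being a $1$-dimensional $\phi$-Patterson--Sullivan measure on $\Lambda_\theta(\Gamma)$, as assumed in the statement.
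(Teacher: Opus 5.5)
Your argument is correct, and it reaches the conclusion by a more economical route than the paper.

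Both proofs are first Borel--Cantelli arguments over shadows. Both use the nearby points $\phi_{\pm t}$ of $\partial\Qc_\theta(\Gamma)$ supplied by the differentiability of $\Phi$. The key estimate, however, is different.

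The paper reduces to $\psi=\omega_\alpha$. It introduces an auxiliary $\delta^{\omega_\alpha}(\Gamma)$-dimensional $\omega_\alpha$-Patterson--Sullivan measure $\mu$, which requires Lemma~\ref{obs:finite critical exp fund weight}, and detects bad shadows through the condition $\mu(\widehat{\Oc}_R(\gamma))\ge e^{-(r-\epsilon)\delta\psi_0(\kappa(\gamma))}$. It then groups elements into the annuli $S_n^{\psi_0}$ and inserts the Chernoff-type weight $\mu(\widehat{\Oc}_R(\gamma))^s e^{s(r-\epsilon)\delta\psi_0(\kappa(\gamma))}$. The remaining input is the critical-exponent bound $\sum_{\gamma\in S_n^{\psi_0}}e^{-\phi_{\pm s}(\kappa(\gamma))}\ll 1$. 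The paper obtains this from the Shadow Lemma applied to Patterson--Sullivan measures for $\phi_{\pm s}$, together with uniform multiplicity of shadows (Lemma~\ref{lem:uniform multiplicity}). The outcome is geometric decay $\mu_\phi(E_{R,n})\ll e^{-s\delta(\epsilon-f(s))n}$, and its analogue for $F_{R,n}$.

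You instead sum directly over the bad elements of all of $\Gamma$. You convert the additive gain $\tfrac{t\varepsilon}{2}\psi_0$ into a multiplicative gain $\phi\ge(1+\eta)\phi_t$, using $\phi_t\le M\psi_0$ on $\mfa_\theta^+$. The only input is then convergence of the $\phi_t$-Poincar\'e series at exponent $1+\eta>1$. This is immediate from $\delta^{\phi_t}(\Gamma)=1$. Monotonicity in the exponent holds because finiteness of $\delta^{\phi_t}(\Gamma)$ forces $\phi_t(\kappa(\gamma))>0$ for all but finitely many $\gamma$.

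Your route therefore avoids the auxiliary measure $\mu$, the existence of Patterson--Sullivan measures for $\phi_{\pm t}$, and uniform multiplicity. Only the upper Shadow Lemma bound for $\mu_\phi$ itself is used. The paper's annular estimate does give a quantitative exponential decay of the bad mass at each scale, but the statement does not need it.

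Three bookkeeping points should be made explicit:
\begin{itemize}
\item Shadows only make sense after passing to a projectively visible model, which requires replacing $\Gsf$ by a suitable quotient.
\item $R$ should range over integers $\ge R_0$, so that $A$ is the complement of a countable union.
\item $E^{\pm}_{R,\varepsilon}$ need not be Borel. It is, however, contained in the Borel null set $\bigcap_N\bigcup\{\widehat{\Oc}_R(\gamma):\gamma\in S^-,\ \norm{\kappa_\theta(\gamma)}\ge N\}$, which suffices.
\end{itemize}
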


\begin{proof}  By the ergodic dichotomy (see \cite[Theorem 1.4]{CZZ2024}), the conical limit set either has full or zero $\mu_{\phi}$-measure. In the case when the conical limit set has zero measure, the proposition is vacuously true, so we can assume the conical limit set has full $\mu_{\phi}$-measure.

By replacing $\Gsf$ with a quotient, we can assume that  $\Gsf$ has trivial center   and $\Psf_\theta$ contains no simple factors of $\Gsf$, see \cite[Section 2.4]{CZZ2024}. Then, using Theorem~\ref{thm:transverse have proj models}, we can fix a projectively visible model $(\Omega, \Gamma_0, \rho, \xi)$ of $\Gamma$. 

By Proposition~\ref{prop:char of conical}, if $x \in \Lambda_\theta(\Gamma)$ and $\{\gamma_n\} \subset \Gamma$ converges to $x$ conically, then there exists $R > 0$ such that 
$$
x \in \bigcap_{n=1}^\infty \widehat \Oc_R(\gamma_n). 
$$

  By linearity, it suffices to prove the assertion when $\psi = \omega_{\alpha}$, $\alpha \in \theta_0$. Fix $\alpha \in \theta_0$ and write 
  $$
  r := - \left. \frac{d}{dt} \right|_{t=0} \Phi( u +  t \omega_{\alpha}) \quad \text{and} \quad \delta := \delta^{\omega_\alpha}(\Gamma)
  $$
  for simplicity. Since $\Gamma$ is $\{\alpha\}$-discrete for every $\alpha\in \theta$, by Lemma \ref{obs:finite critical exp fund weight}, $\delta < + \infty$. For $s \neq 0$, let 
$$
f(s) := \frac{\Phi(s \delta \omega_{\alpha} + u) - \Phi(u)}{s \delta} +r.
$$
  Then $f(s) \to 0$ as $s \to 0$. 
  
It also suffices to fix $\epsilon > 0$ and construct a full $\mu_\phi$-measure subset $A_\epsilon \subset  \Lambda_\theta(\Gamma)$ with the following property: if $x \in A_\epsilon$ and  $\{\gamma_j\} \subset \Gamma$ converges to $x$ conically, then 
\begin{equation} \label{eqn:ineq for r}
2\epsilon + \liminf_{j \rightarrow \infty} \frac{\omega_{\alpha}(\kappa(\gamma_j))}{\psi_0(\kappa(\gamma_j))} \geq r \geq -2\epsilon + \limsup_{j \rightarrow \infty} \frac{\omega_{\alpha}(\kappa(\gamma_j))}{\psi_0(\kappa(\gamma_j))}. 
\end{equation}
Indeed, to finish the proof we can take $A := \bigcap_{n \in \Nb} A_{1/n}$. 
  
  \medskip  
  Fix $\epsilon > 0$ and then fix $s > 0$ such that $\epsilon - f(s) > 0$ and $\epsilon + f(-s) > 0$. Let 
  $$\begin{aligned}
  \phi_s & := (s \delta \omega_{\alpha} + u) + \Phi(s \delta \omega_{\alpha} + u)\psi_0 \quad \text{and} \\
      \phi_{-s} & := (-s \delta \omega_{\alpha} + u) + \Phi(-s \delta \omega_{\alpha} + u)\psi_0. \\
  \end{aligned}
  $$
  Let $\mu$ be a $\delta$-dimensional $\omega_{\alpha}$-Patterson--Sullivan measure, let $\mu_{\phi_s}$ be a $1$-dimensional $\phi_s$-Patterson--Sullivan measure, and let $\mu_{\phi_{-s}}$ be a $1$-dimensional $\phi_{-s}$-Patterson--Sullivan measure for $\Gamma$ on $\Lambda_{\theta}(\Gamma)$.   Fix $R_0 \in \Nb$ sufficiently large so that the Shadow Lemma (Proposition~\ref{prop:Shadow Lemma}) holds for $\mu$, $\mu_{\phi}$,  $\mu_{\phi_s}$, and $\mu_{\phi_{-s}}$ whenever $R \geq R_0$. 

We first show that for $\mu_\phi$-a.e.\ $x \in \Lambda_{\theta}(\Gamma)$, if $\{\gamma_j\} \subset \Gamma$ converges to $x$ conically,  then 
$$
\liminf_{j \rightarrow \infty} \frac{\omega_{\alpha}(\kappa(\gamma_j))}{\psi_0(\kappa(\gamma_j))} \geq r-2\epsilon.
$$

  Fix $R \geq R_0$. For each $n \in \Nb$, recall 
$$
S_n^{\psi_0} = \{ \gamma \in \Gamma : \psi_0( \kappa(\gamma)) \in [n, n+ 1)\}.
$$
and let 
$$
E_{R,n} := \bigcup \left\{ \widehat\Oc_R(\gamma) : \gamma \in S_n^{\psi_0} \text{ and } \mu\left(\widehat\Oc_R(\gamma)\right) \ge e^{-(r - \epsilon) \delta \psi_0(\kappa(\gamma))} \right\}.
$$
Then by definition, 
$$
\mathbf{1}_{E_{R, n}} \le \sum_{\gamma \in S_n^{\psi_0}} \mu \left(\widehat\Oc_R(\gamma) \right)^s  e^{s(r - \epsilon) \delta \psi_0(\kappa(\gamma))} \mathbf{1}_{\widehat\Oc_R(\gamma)}
$$
where $\mathbf{1}$ denotes the indicator function, and hence 
$$
\mu_{\phi}(E_{R,n}) \le \sum_{\gamma \in S_n^{\psi_0}} \mu \left(\widehat{\Oc}_R(\gamma) \right)^s  e^{s(r - \epsilon) \delta \psi_0(\kappa(\gamma))} \mu_{\phi}\left(\widehat\Oc_R(\gamma) \right).
$$
Then, by the Shadow Lemma (Proposition~\ref{prop:Shadow Lemma})
$$\begin{aligned}
\mu_{\phi}(E_{ R,n}) & \ll \sum_{\gamma \in S_n^{\psi_0}} e^{-s \delta \omega_{\alpha}(\kappa(\gamma))} e^{s(r - \epsilon) \delta \psi_0(\kappa(\gamma))} e^{-\phi(\kappa(\gamma))} \\
& \asymp e^{s (r - \epsilon) \delta n - \Phi(u) n} \sum_{\gamma \in S_n^{\psi_0}} e^{-\left(s \delta \omega_{\alpha} + u\right) (\kappa(\gamma)) }.
\end{aligned} 
$$
Here and below, the implicit constants depend on $R$ but not on $n$. 

Notice that by the Shadow Lemma (Proposition~\ref{prop:Shadow Lemma}) and the uniform multiplicity of shadows in the sets $S_n^{\psi_0}$ (Lemma~\ref{lem:uniform multiplicity}),  
$$
\sum_{\gamma \in S_n^{\psi_0}} e^{- \left( (s \delta \omega_{\alpha} + u) + \Phi(s \delta\omega_{\alpha} + u)\psi_0\right)(\kappa(\gamma))} =\sum_{\gamma \in S_n^{\psi_0}} e^{- \phi_s(\kappa(\gamma))}\asymp \sum_{\gamma \in S_n^{\psi_0}} \mu_{\phi_s}\left(\widehat\Oc_R(\gamma)\right) \ll 1.
$$
Hence,
$$
\sum_{\gamma \in S_n^{\psi_0}} e^{-\left(s \delta \omega_{\alpha} + u\right) (\kappa(\gamma)) } \ll e^{\Phi(s \delta \omega_{\alpha} + u) n},
$$
and therefore
$$
\mu_{\phi}(E_{R,n}) \ll e^{s (r - \epsilon) \delta n - \Phi(u) n} e^{\Phi(s \delta \omega_{\alpha} + u) n}.
$$
Then
$$
\mu_{\phi}(E_{R,n}) \ll \left( e^{s (r - \epsilon) \delta - r s \delta + f(s) s \delta} \right)^n = \left( e^{- s \delta(\epsilon - f(s))} \right)^n.
$$
Since $s > 0$ satisfies $\epsilon - f(s) > 0$, we have 
$$
\sum_{n = 1}^{\infty} \mu_{\phi}(E_{R,n}) < + \infty.
$$
By the Borel--Cantelli lemma, this implies that for $\mu_{\phi}$-a.e.\ $x \in \Lambda_{\theta}(\Gamma)$,
$$
x \notin E_{R,n} \quad \text{for all sufficiently large } n.
$$

Recall that $R \geq R_0$ was arbitrary. Let 
$$
E := \bigcup_{R\in [R_0,\infty) \cap \Nb} \bigcap_{m=1}^\infty \bigcup_{n=m}^\infty E_{R,n}.
$$
Then $E$ is $\mu_{\phi}$-null. The following claim finishes the proof of the first inequality in \eqref{eqn:ineq for r}.

\medskip

\noindent \textbf{Claim:} If $x \in \Lambda_{\theta}(\Gamma) \smallsetminus E$ and $\{\gamma_j\} \subset \Gamma$ converges to $x$ conically,  then 
$$
\liminf_{j \rightarrow \infty} \frac{\omega_{\alpha}(\kappa(\gamma_j))}{\psi_0(\kappa(\gamma_j))} \geq r-2\epsilon.
$$

\noindent \emph{Proof of Claim:} By Proposition~\ref{prop:char of conical}, there exists $R \in [R_0,\infty) \cap \Nb$ such that
$$
x \in \bigcap_{j \ge 1} \widehat\Oc_R(\gamma_j).
$$
Hence, in order to have $x \notin E_{R, n}$ for all sufficiently large $n \in \Nb$, we must have
$$
\mu\left(\widehat\Oc_R(\gamma_j)\right) < e^{-(r - \epsilon) \delta \psi_0(\kappa(\gamma_j))} \quad \text{for all sufficiently large } j \in \Nb.
$$
Then the Shadow Lemma (Proposition~\ref{prop:Shadow Lemma}) implies that 
$$
\frac{\omega_{\alpha}(\kappa(\gamma_j))}{\psi_0(\kappa(\gamma_j))} \ge r - 2\epsilon \quad \text{for all sufficiently large } j \in \Nb.
$$
This completes the proof of the claim. 

\medskip

A similar argument also shows the second inequality in \eqref{eqn:ineq for r}. We present this for the sake of completeness. Fix $R \geq R_0$ and consider the set
$$
F_{R,n} := \bigcup \left\{ \widehat\Oc_R(\gamma) : \gamma \in S_n^{\psi_0} \text{ and } \mu\left(\widehat\Oc_R(\gamma)\right) \le e^{-(r + \epsilon) \delta \psi_0(\kappa(\gamma))} \right\}.
$$
Then by definition, 
$$
\mathbf{1}_{F_{R, n}} \le \sum_{\gamma \in S_n^{\psi_0}} \mu\left(\widehat\Oc_R(\gamma)\right)^{-s}  e^{-s(r + \epsilon) \delta \psi_0(\kappa(\gamma))} \mathbf{1}_{\widehat\Oc_R(\gamma)}
$$
and so 
$$
\mu_{\phi}(F_{R,n}) \le \sum_{\gamma \in S_n^{\psi_0}} \mu\left(\widehat\Oc_R(\gamma)\right)^{-s}  e^{-s(r + \epsilon) \delta \psi_0(\kappa(\gamma))} \mu_{\phi}\left(\widehat\Oc_R(\gamma)\right).
$$

Then, by the Shadow Lemma (Proposition~\ref{prop:Shadow Lemma})
$$\begin{aligned}
\mu_{\phi}(F_{ R,n}) & \ll \sum_{\gamma \in S_n^{\psi_0}} e^{s \delta \omega_{\alpha}(\kappa(\gamma))} e^{-s(r + \epsilon) \delta \psi_0(\kappa(\gamma))} e^{-\phi(\kappa(\gamma))} \\
& \asymp e^{-s (r + \epsilon) \delta n - \Phi(u) n} \sum_{\gamma \in S_n^{\psi_0}} e^{-\left(- s \delta \omega_{\alpha} +  u\right) (\kappa(\gamma)) }.
\end{aligned} 
$$
Here and below, the implicit constants depend on $R$ but not on $n$. 

Notice that by the Shadow Lemma  (Proposition~\ref{prop:Shadow Lemma})  and the uniform multiplicity of shadows in the sets $S_n^{\psi_0}$ (Lemma~\ref{lem:uniform multiplicity}),
$$
\sum_{\gamma \in S_n^{\psi_0}} e^{- \left( (-s \delta \omega_{\alpha} + u) + \Phi(-s \delta\omega_{\alpha} + u)\psi_0\right)(\kappa(\gamma))} =\sum_{\gamma \in S_n^{\psi_0}} e^{- \phi_{-s}(\kappa(\gamma))}\asymp \sum_{\gamma \in S_n^{\psi_0}} \mu_{\phi_{-s}}\left(\widehat\Oc_R(\gamma)\right) \ll 1.
$$
Hence,
$$
\sum_{\gamma \in S_n^{\psi_0}} e^{-\left(- s \delta \omega_{\alpha} + u\right) (\kappa(\gamma)) } \ll e^{\Phi(- s \delta \omega_{\alpha} + u) n},
$$
and therefore
$$
\mu_{\phi}(F_{R,n}) \ll e^{- s (r + \epsilon) \delta n - \Phi(u) n} e^{\Phi(- s \delta \omega_{\alpha} + u) n}.
$$
Then
$$
\mu_{\phi}(F_{R,n}) \ll \left( e^{- s (r + \epsilon) \delta + r s \delta - f(-s) s \delta} \right)^n = \left( e^{- s \delta(\epsilon + f(-s))} \right)^n.
$$
Since $\epsilon + f(-s) > 0$,
$$
\sum_{n = 1}^{\infty} \mu_{\phi}(F_{R,n}) < + \infty.
$$
By the Borel--Cantelli lemma, this implies that for $\mu_{\phi}$-a.e.\ $x \in \Lambda_{\theta}(\Gamma)$,
$$
x \notin F_{R,n} \quad \text{for all sufficiently large } n.
$$

Recall that $R \geq R_0$ was arbitrary. Hence the set 
$$
F := \bigcup_{R\in [R_0,\infty) \cap \Nb} \bigcap_{m=1}^\infty \bigcup_{n=m}^\infty F_{R,n}
$$
is $\mu_{\phi}$-null. As in the preceding claim, one verifies that if $x \in \Lambda_{\theta}(\Gamma) \smallsetminus F$ and $\{\gamma_j\} \subset \Gamma$ converges to $x$ conically,  then 
$$
\limsup_{j \rightarrow \infty} \frac{\omega_{\alpha}(\kappa(\gamma_j))}{\psi_0(\kappa(\gamma_j))} \leq r+2\epsilon.
$$

\medskip

Then $A_\epsilon := \Lambda_{\theta}(\Gamma) \smallsetminus (E \cup F)$ is our desired full $\mu_{\phi}$-measure subset. 
\end{proof}

\section{Continuity properties of BMS measures}

Suppose $\theta = \opp^* \theta$, $\Gamma < \Gsf$ is $\theta$-transverse, and $(\Omega, \Gamma_0, \rho, \xi)$ is a projectively visible model  of $\Gamma$.  

We prove the following continuity result for BMS measures associated to linear forms that are positive on the $\theta$-limit cone and have a critical gap at infinity. 
  
  \begin{theorem} \label{thm:critical gap is stable}
    Suppose $\phi \in \mfa_\theta^*$ is positive on $\mathcal{L}_{\theta}(\Gamma) \smallsetminus \{0\}$ and has a critical gap at infinity for $\Gamma$. Then the following statements hold: 
    \begin{enumerate}
    \item There exists a neighborhood $\Uc \subset \mfa_{\theta}^*$ of $\phi$ such that, for every $\psi \in \Uc$,
    \begin{enumerate}
    \item[(1-a)] $\psi$ is positive on $\mathcal{L}_{\theta}(\Gamma) \smallsetminus \{0\}$ and
    \item[(1-b)] $\psi$ has a critical gap at infinity for $\Gamma$.
    \end{enumerate} 
    \item If $m_\psi$ denotes the BMS measure for $\psi \in \Uc$ and $f : \Gamma_0 \backslash \Gc \rightarrow \Rb$ is a bounded continuous function, then 
    $$
    \psi \in \Uc \mapsto \int f\,dm_\psi \in \Rb
    $$
    is continuous. 
    \end{enumerate}

  \end{theorem}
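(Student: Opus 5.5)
For (1-a) I will use that $\mathcal L_\theta(\Gamma)$ is a closed cone. Hence positivity of $\phi$ on $\mathcal L_\theta(\Gamma)\smallsetminus\{0\}$ means $\phi\ge c\norm{\cdot}$ on $\mathcal L_\theta(\Gamma)$ for some $c>0$, by compactness of the unit sphere intersected with the cone. This inequality persists for all $\psi$ with $\norm{\psi-\phi}<c/2$. Moreover, since $\kappa_\theta(\Gamma)$ lies within a sublinear neighborhood of $\mathcal L_\theta(\Gamma)$, we get a uniform estimate
$$\abs{\psi(\kappa_\theta(\rho(\gamma)))-\phi(\kappa_\theta(\rho(\gamma)))}\le \epsilon\,\phi(\kappa_\theta(\rho(\gamma)))+C_\epsilon$$
for $\norm{\psi-\phi}$ small. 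Call this estimate $(\ast)$.

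For (1-b), $(\ast)$ shows that on any subset of $\Gamma_0$ (in particular on $\Gamma$ itself and on $\Gamma_K$), the abscissas of convergence for $\psi$ and $\phi$ differ by a factor in $[(1+\epsilon)^{-1},(1-\epsilon)^{-1}]$. So $\delta^\psi(\Gamma)$, and $\delta^\psi(K)$ for each fixed $K$, depend continuously on $\psi$ at $\phi$ in a multiplicative sense. I will fix one compact $K\supset K_0$ with $\delta^\phi(K)<\delta^\phi(\Gamma)$. Shrinking $\Uc$ then gives $\delta_\infty^\psi(\Gamma)\le\delta^\psi(K)<\delta^\psi(\Gamma)$ for all $\psi\in\Uc$, with a uniform gap.

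For (2), I will normalize $m_\psi$ to probability measures, which is legitimate since they are finite by Theorem~\ref{thm:Wen's result}. There are three steps.

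First, continuity of the Patterson--Sullivan measures. Let $\psi_k\to\psi$ in $\Uc$. The measures $\mu_{\psi_k}$ and $\mu_{\opp^*\psi_k}$ on the compact space $\Fc_\theta$ have weak-$*$ limit points. Continuity of the critical exponents, together with continuity of the cocycle $B_\theta$, shows that any such limit is a $\delta^\psi(\Gamma)$-dimensional $\psi$-Patterson--Sullivan measure supported on $\Lambda_\theta(\Gamma)$. By uniqueness in Theorem~\ref{thm:consequence of erg dich} (divergence type follows from Theorem~\ref{thm:Wen's result}), this limit is $\mu_\psi$, so the whole sequence converges.

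Second, convergence on compact sets. The Gromov product is continuous on $\Lambda^{(2)}_\theta(\Gamma)$. Hence for $f$ compactly supported, the integrals of its lift against $\tilde m_{\psi_k}$ converge to those against $\tilde m_\psi$, using a fundamental-domain partition of unity. Thus the unnormalized $m_{\psi_k}$ converge vaguely to $m_\psi$.

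Third, the main obstacle, tightness: I need $\sup_k m_{\psi_k}(\Gamma_0\backslash\Gc\smallsetminus K')\to0$ as $K'$ exhausts. The plan is to reprove Wen's finiteness argument with constants uniform in $\psi\in\Uc$. A $\tilde m_\psi$-generic orbit leaving $K$ spends excursions coded by elements of $\Gamma_K$. Using the shadow lemma (Proposition~\ref{prop:Shadow Lemma}), the mass of excursions of length at least $T$ is bounded by
$$\sum_{\gamma\in\Gamma_K,\ \psi(\kappa(\rho(\gamma)))\ge cT} e^{-\delta^\psi(\Gamma)\psi(\kappa(\rho(\gamma)))}\,\psi(\kappa(\rho(\gamma))).$$
By the uniform gap $\delta^\psi(K)+\eta<\delta^\psi(\Gamma)$ and $(\ast)$, this is at most $Ce^{-\eta' T}$ uniformly in $\psi\in\Uc$, provided the shadow lemma constants are uniform near $\phi$. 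I would obtain that uniformity from the weak-$*$ continuity of the Patterson--Sullivan measures in the first step. Together with a uniform lower bound $m_{\psi}(K)\ge c>0$, which also follows from weak convergence, this gives tightness and continuity of the total mass. Combining vague convergence and tightness yields $\int f\,dm_{\psi_k}\to\int f\,dm_\psi$ for bounded continuous $f$.
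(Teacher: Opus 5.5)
Your proposal is correct and follows essentially the same route as the paper. Part (1) comes from the two-sided comparison $(1-\epsilon)\phi\le\psi\le(1+\epsilon)\phi$ on the limit cone, with the additive constant made explicit in your $(\ast)$. Part (2) combines vague continuity of $\psi\mapsto m_\psi$, via uniqueness of Patterson--Sullivan measures, with a uniform tightness estimate obtained by rerunning Wen's excursion argument over $\Gamma_{K}$ with the shadow lemma and the uniform gap, exactly as in the paper's Lemma~\ref{after}. The only minor simplification is that you do not need weak-$*$ continuity to get uniform shadow-lemma constants: only the upper shadow bound is used, and its constant depends only on $R$, $\delta^\psi(\Gamma)$ and $\norm{\psi}$, so it is automatically uniform on a bounded neighborhood.
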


The rest of the section is devoted to the proof of the theorem. Fix $0 < \epsilon < 1$ such that 
   \begin{equation}\label{eqn:epsilon est 1}
 \left(\frac{1+\epsilon}{1-\epsilon} \right)\delta_\infty^\phi(\Gamma) <  \delta^\phi(\Gamma).
  \end{equation}
  Let $\mathbb{S}$ denote the unit sphere in $\mfa_\theta$ with respect to some norm. By compactness of $\mathbb{S} \cap \mathcal{L}_{\theta}(\Gamma)$ and positivity of $\phi$ on $\mathcal{L}_\theta(\Gamma) \smallsetminus \{0\}$, we can fix a neighborhood $\Uc$ of $\phi$ such that 
  $$
(1-\epsilon)  \phi(v) \leq \psi(v) \leq (1+\epsilon) \phi(v)
  $$
  for all $\psi \in \Uc$ and $v \in \mathbb{S} \cap \mathcal{L}_\theta(\Gamma)$. By linearity, these inequalities extend to all of $\mathcal{L}_\theta(\Gamma)$. Thus (1-a) follows. 
  Further,  
     \begin{equation}\label{eqn:epsilon est 2}
    \delta_\infty^\psi(\Gamma) \leq \left(\frac{1}{1-\epsilon} \right)  \delta_\infty^\phi(\Gamma) \quad \text{and} \quad \delta^\psi(\Gamma) \geq \left( \frac{1}{1+\epsilon} \right) \delta^\phi(\Gamma) 
    \end{equation}
    for all $\psi \in \Uc$. Then by our choice of $\epsilon > 0$, every linear form $\psi \in \Uc$ has a critical gap at infinity for $\Gamma$. This shows (1-b).
    
Now Theorems~\ref{thm:Wen's result} and~\ref{thm:consequence of finite volume} imply that for each $\psi \in \Uc$, the BMS measure $m_\psi$ has finite mass and the Hilbert geodesic flow on $\Gamma_0 \backslash \Gc$ is ergodic with respect to $m_\psi$.  

Since $\psi \mapsto \delta^\psi(\Gamma)$ is convex (see, e.g., \cite[Theorem 13.1]{CZZ2024}), the critical exponent depends continuously on the linear form. Then the construction of the BMS measures and uniqueness of Patterson--Sullivan measures (see Theorem~\ref{thm:consequence of erg dich}) imply that for any compactly supported continuous function $f : \Gamma_0 \backslash \Gc \rightarrow \Rb$, the map 
\begin{equation}\label{eqn:cont of meas}
\psi \in \Uc \mapsto \int f\,dm_\psi
\end{equation}
is continuous. Thus, part (2) of the theorem holds for compactly supported continuous functions. To upgrade to bounded continuous functions, we use the following lemma. 

\begin{lemma}\label{after} After shrinking $\Uc$, for any $r > 0$ there exists a compact set $K \subset \Gamma_0 \backslash \Gc$ such that 
$$
\sup_{\psi \in \Uc} m_\psi(K^c) < r. 
$$
\end{lemma}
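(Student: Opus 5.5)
Since the BMS measures are flow-invariant, the natural way to bound their mass outside a compact set is to cover the complement of a large compact set by excursions. The plan is to run Wen's finiteness argument (Theorem~\ref{thm:Wen's result}) with constants made uniform over $\psi\in\Uc$, using the uniform gap given by \eqref{eqn:epsilon est 1} and \eqref{eqn:epsilon est 2}.

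\emph{Step 1: a uniform gap.} First shrink $\Uc$ to a small compact neighborhood of $\phi$. Fix a compact $K_1\supset K_0$ in $\Gc$ with
$$
\delta^\phi(K_1)\leq\delta^\phi_\infty(\Gamma)+\eta ,
$$
where $\eta>0$ is chosen so that $\frac{1+\epsilon}{1-\epsilon}(\delta_\infty^\phi(\Gamma)+\eta)<\delta^\phi(\Gamma)$. Since $(1-\epsilon)\phi\le\psi$ on $\mathcal L_\theta(\Gamma)$, up to an additive constant (coming from the coarse containment of $\kappa_\theta(\Gamma)$ in a cone near $\mathcal L_\theta(\Gamma)$) we get for all $\psi\in\Uc$
$$
\delta^\psi(K_1)\le\frac{\delta^\phi(K_1)}{1-\epsilon}<s_*<\delta_*\le\delta^\psi(\Gamma),
$$
where $s_*$ and $\delta_*$ are constants independent of $\psi$. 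By Lemma~\ref{lem:monotone of entropy of K}, the same bound holds for every compact $K_2$ with $K_1\subset\operatorname{int}K_2$. Moreover,
$$
\sum_{\gamma\in\Gamma_{K_2}}e^{-\delta^\psi(\Gamma)\psi(\kappa(\rho(\gamma)))}\le\sum_{\gamma\in\Gamma_{K_2}}e^{-\delta_*(1-\epsilon)\phi(\kappa(\rho(\gamma)))+C},
$$
which is a uniformly convergent bound. Its tail over those $\gamma$ with $\phi(\kappa(\rho(\gamma)))\ge T$ tends to $0$ as $T\to\infty$, uniformly in $\psi$.

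\emph{Step 2: measure outside compacta via excursions.} Write $\widehat K$ for the image of $K_2$ in $\Gamma_0\backslash\Gc$. Each point of $\widehat K^c$ lies on an excursion of the flow: an orbit segment leaving $\widehat K$ at $v\in K_2$ and returning at $g_tv\in\gamma K_2$ with $\gamma\in\Gamma_{K_2}$. Following Wen, estimate the measure of excursions with return element $\gamma$ and length $t\approx\dist_\Omega(o,\gamma o)$. Use flow-invariance (the Kac/flow-box formula) together with the product structure of $\tilde m_\psi$ and the Shadow Lemma (Proposition~\ref{prop:Shadow Lemma}). The aim is a bound of the form
$$
m_\psi(\widehat K^c)\ \ll\ \sum_{\gamma\in\Gamma_{K_2}}\dist_\Omega(o,\gamma o)\,e^{-\delta^\psi(\Gamma)\psi(\kappa(\rho(\gamma)))}\,\mu_\psi(\cdot)\,\mu_{\psi,\opp}(\cdot).
$$
The point of this step is that the implied constants must be uniform in $\psi$. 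The Shadow Lemma constants depend on $\psi$, but they are given explicitly through the cocycle bound \cite[Lemma 7.3]{CZZ2024}; this dependence is continuous in $\psi$ and the $\mu_\psi$ are probability measures, so the constants can be taken uniform on the compact neighborhood $\Uc$. The linear factor $\dist_\Omega(o,\gamma o)\asymp\phi(\kappa(\rho(\gamma)))$ is absorbed because $s_*<\delta_*$ leaves exponential room.

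\emph{Step 3: enlarging the compact set.} To make the mass outside small rather than merely finite, take $K=K_2^{(T)}$ to be the image of the set of points lying within flow time $T$ of $K_2$ along excursions, restricted to excursions of length at most some bound. Its complement only meets excursions with $\phi(\kappa(\rho(\gamma)))\gtrsim T$. By Step 1 the contribution of these excursions tends to $0$ uniformly in $\psi\in\Uc$. Finally, $K$ is compact, since it consists of finitely many flow-boxes of bounded length over the compact $K_2$.

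\textbf{Main obstacle.} The main obstacle is Step 2: extracting from Wen's argument an estimate of the mass of long excursions in which every constant is explicitly uniform in $\psi$. I would first check that all the $\psi$-dependence enters only through Shadow Lemma constants and through $\delta^\psi(\Gamma)$, and that both vary continuously on $\Uc$.
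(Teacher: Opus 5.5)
Your proposal follows essentially the paper's argument: choose a compact set (the paper uses $K_R=\{v:\dist_\Omega(\pi(v),o)\le R\}$) with $\delta^\phi(K_R)<(1-\epsilon)\delta^\psi(\Gamma)$ uniformly for $\psi\in\Uc$, and cover the quotient up to a null set by the excursion sets $A_\gamma$, $\gamma\in\Gamma_{K_R}$. You then bound $\tilde m_\psi(A_\gamma)\ll(\dist_\Omega(o,\gamma o)+2C)e^{-\delta^\psi(\Gamma)\psi(\kappa(\rho(\gamma)))}$ using $v^+\in\Oc_{2R}(\gamma)$ and the upper Shadow Lemma, and take $K$ to be the image of finitely many $A_\gamma$; your explicit check that the upper Shadow Lemma constant is uniform on $\Uc$ is a step the paper uses implicitly. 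The one thing to tighten is that points of $\widehat K^c$ lie on excursions only $m_\psi$-almost everywhere, not everywhere: the paper gets this from finiteness and ergodicity of $m_\psi$ together with $m_\psi(q(K_R))>0$ for all $\psi\in\Uc$.
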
 

Assuming the lemma, we complete the proof of (2) in the theorem. Fix $\psi_n \rightarrow \psi$ in $\Uc$. Then fix $r > 0$ and a compact set $K \subset \Gamma_0 \backslash \Gc$ such that 
$$
\sup_{\psi \in \Uc} m_\psi(K^c) < r. 
$$
Fix a compactly supported continuous function $\chi : \Gamma_0 \backslash \Gc \rightarrow [0,1]$ with $\chi|_{K} \equiv 1$. Then by the continuity in \eqref{eqn:cont of meas},
\begin{align*}
\limsup_{n \rightarrow \infty} & \abs{ \int f dm_{\psi_n} - \int f dm_\psi }   \leq \limsup_{n \rightarrow \infty} \abs{ \int \chi f dm_{\psi_n} - \int \chi f dm_\psi }  + 2 \norm{f}_\infty r \\
& = 2 \norm{f}_\infty r.
\end{align*}
Since $r > 0$ was arbitrary, $\int f dm_{\psi_n} \rightarrow \int f dm_\psi$. 
\qed

\medskip

It remains to prove the lemma. The argument is similar to the proof of \cite[Theorem 3.3]{Wen2026}, which is itself similar to the arguments in \cite[Section 5.3]{PS_finiteness}.  

\begin{proof}[Proof of Lemma~\ref{after}] Fix $R > 0$ sufficiently large and let 
    $$
    K_R := \{ v \in \Gc : \dist_\Omega(\pi(v),o) \leq R \}
    $$
 where $\pi : T^1 \Omega \rightarrow \Omega$ is the basepoint projection.   
Lemma~\ref{lem:monotone of entropy of K} implies that
 $$
 \lim_{R \rightarrow \infty} \delta^\phi(K_R) = \delta_\infty^\phi(\Gamma).
 $$
Using \eqref{eqn:epsilon est 1}, we can increase $R > 0$ and assume that 
  $$
 \left( \frac{1+\epsilon}{1-\epsilon}\right)  \delta^\phi(K_R) <  \delta^\phi(\Gamma).
  $$
Then \eqref{eqn:epsilon est 2} implies
   that for all $\psi \in \Uc$,
  \begin{equation}\label{eqn:gap with epsilon1}
\delta^\phi(K_R)  <  (1-\epsilon)\delta^\psi(\Gamma).
  \end{equation}

    Let $q : \Gc \rightarrow \Gamma_0 \backslash \Gc$ be the quotient map and let $\widehat{K}_R := q(K_R)$. Using the continuity in \eqref{eqn:cont of meas}, increasing $R$, and replacing $\Uc$ with a smaller neighborhood, we can assume that   for all $\psi \in \Uc$,
    $$
    m_\psi(\widehat{K}_R) > 0.
    $$

    For $\gamma \in \Gamma_{K_R}$,   let 
    $$
    A_\gamma := \{ v \in \Gc : \{ g_tv\}_{t \leq 0} \cap K_R \neq \emptyset \text{ and } \{g_tv\}_{t \geq 0} \cap \gamma K_R \neq \emptyset \}.
    $$
        Since the flow on $\Gamma_0 \backslash \Gc$ is ergodic with respect to each $m_\psi$ (Theorems~\ref{thm:Wen's result} and~\ref{thm:consequence of finite volume}), almost every flow line visits $\widehat{K}_R$ at an unbounded set of times. Hence if 
            $$
    \Ac := \bigcup_{\gamma \in \Gamma_{K_R}} A_\gamma, 
    $$
    then $q(\Ac)$ has full $m_\psi$-measure  for all $\psi \in \Uc$.
    
    \medskip

\noindent \textbf{Claim:} There exists $C =C(R) > 0$ such that: If $\gamma \in \Gamma_{K_R}$,  then 
$$
A_\gamma \subset \Lambda_\Omega(\Gamma_0) \times \Oc_{2R}(\gamma) \times [-C, C+ \dist_\Omega(o,\gamma o)]. 
$$

\noindent \emph{Proof of Claim:} We use the Hopf coordinates on $\Gc$. Let 
$$
C := \max \{ \abs{s} : (w^-, w^+, s) \in K_R\}.
$$
Then 
$$
\max\{ \abs{s} : (w^-, w^+, s) \in \gamma K_R\} \leq C + \dist_\Omega(o,\gamma o).
$$

Now fix $v = (v^-, v^+, t) \in A_\gamma$. By the definition of $A_\gamma$, we can choose $a \leq 0$ with $g_a v \in K_R$ and choose $b \geq 0$ with $g_b v \in \gamma K_R$. Then 
$$
t \geq t+a \geq -C
\quad\text{and}\quad 
t \leq t+b \leq C+ \dist_\Omega(o,\gamma o). 
$$
So 
$$ 
t \in [-C, C+ \dist_\Omega(o,\gamma o)]. 
$$
By the definition of $A_\gamma$, $[\pi(g_a v), v^+)$ intersects $B_\Omega(\gamma o, R)$ and $\pi(g_a v) \in B_\Omega(o,R)$, where $B_{\Omega}$ denotes a $\dist_{\Omega}$-ball. So Lemma~\ref{obs:distance between rays} implies that $[o,v^+)$ intersects $B_\Omega(\gamma o, 2R)$. Hence $v^+ \in  \Oc_{2R}(\gamma)$. This completes the proof of the claim. 
\medskip 

Notice that $\max_{v \in K_R} \norm{ \langle \xi(v^-), \xi(v^+) \rangle_\theta } $ is finite. So by the Shadow Lemma (Proposition~\ref{prop:Shadow Lemma}), if $\gamma \in \Gamma_{K_R}$, then 
$$
\tilde m_\psi(A_\gamma) \ll ( \dist_\Omega(o,\gamma o) + 2C) e^{-\delta^\psi(\Gamma) \psi(\kappa(\rho(\gamma)))}.
$$
Since $\phi$ is positive on $\mathcal{L}_{\theta}(\Gamma) \smallsetminus \{0\}$,  there exists $c_1 > 0$ such that  for all $\gamma \in \Gamma_0$,
  $$
\phi(\kappa(\rho(\gamma)))=\phi(\kappa_\theta(\rho(\gamma))) \geq c_1^{-1} \norm{\kappa_\theta(\rho(\gamma))}-c_1 .
  $$
 Then by Theorem~\ref{thm:transverse have proj models}, there exists $c_2, c_2' > 0$ such that for all $\gamma \in \Gamma_0$,
  $$
\dist_\Omega(o,\gamma o) \leq c_2 \phi(\kappa(\rho(\gamma)))+c_2'
 . $$
  We also have that for all $\gamma \in \Gamma_0$ and $\psi \in \Uc$,
 \begin{align*}
 \psi(\kappa(\rho(\gamma))) & \geq \phi(\kappa(\rho(\gamma))) - \norm{\phi-\psi}  \norm{\kappa_\theta(\rho(\gamma))} \\
 & \geq \left(1 -  c_1\norm{\phi-\psi}\right)  \phi(\kappa(\rho(\gamma)))-c_1^2  \norm{\phi-\psi}.
 \end{align*}
   So by replacing $\Uc$ with a smaller neighborhood, we can assume that 
   \begin{align*}
    \psi(\kappa(\rho(\gamma)))  \geq \left(1 -  \epsilon/2 \right)  \phi(\kappa(\rho(\gamma)))-1
 \end{align*}
    for all $\gamma \in \Gamma_0$ and $\psi \in \Uc$. 
    Since $\phi$ is positive on $\mathcal{L}_{\theta}(\Gamma) \smallsetminus \{0\}$ and thus the set 
    $$
    \{ \phi(\kappa(\rho(\gamma))) : \gamma \in \Gamma_0\}
    $$
    is bounded below, we obtain that for all
 $\gamma \in \Gamma_0$   and  $\psi \in \Uc$,  
  $$
  \tilde m_\psi(A_\gamma) \ll (  \phi(\kappa(\rho(\gamma)))+(2C + c_2')/c_2) e^{-\delta^\psi(\Gamma) (1-\epsilon/2) \phi(\kappa(\rho(\gamma)))} .
$$  
 Hence 
      $$
      \tilde m_\psi(A_\gamma) \ll e^{-\delta^\psi(\Gamma) (1-\epsilon) \phi(\kappa(\rho(\gamma)))}.
$$  
    Choose $s$ such that
    $$
    \delta^\phi(K_R) < s < \left(\frac{1-\epsilon}{1+\epsilon}\right) \delta^\phi(\Gamma).
    $$
    By \eqref{eqn:epsilon est 2}, we have $s < (1-\epsilon)\delta^\psi(\Gamma)$ for every $\psi \in \Uc$. Since the set $\{\phi(\kappa(\rho(\gamma))) : \gamma \in \Gamma_0\}$ is bounded below, the preceding estimate implies, with a constant independent of $\psi \in \Uc$, that
    $$
    \tilde m_\psi(A_\gamma) \ll e^{-s\phi(\kappa(\rho(\gamma)))}.
    $$
    The series $\sum_{\gamma \in \Gamma_{K_R}} e^{-s\phi(\kappa(\rho(\gamma)))}$ converges by the definition of $\delta^\phi(K_R)$. Hence the tails of
    $$
    \sum_{\gamma \in \Gamma_{K_R}} \tilde m_\psi(A_\gamma)
    $$
    tend to zero uniformly in $\psi \in \Uc$. Therefore, we can fix a finite set $F \subset \Gamma_{K_R}$ such that 
         for all $\psi \in \Uc$,  $$
    \sum_{\gamma \in \Gamma_{K_R} \smallsetminus F}  \tilde m_\psi(A_\gamma)  < r .
    $$
 Then 
    $$
    K := q\left( \bigcup_{\gamma \in F} A_\gamma \right)
    $$
    is a compact subset of $\Gamma_0 \backslash \Gc$ and 
    $$
\sup_{\psi \in \Uc} m_\psi(K^c) \leq  \sum_{\gamma \in \Gamma_{K_R} \smallsetminus F}  \tilde m_\psi(A_\gamma)  < r.
    $$
\end{proof}

\section{Proof of Theorem~\ref{thm:transverse main}}

We now combine our previous results to deduce Theorem~\ref{thm:transverse main}. Suppose  that $\theta = \opp^* \theta$ and that $\Gamma < \Gsf$ is a non-elementary $\theta$-transverse group. By replacing $\Gsf$ with a quotient, we can assume that  $\Gsf$ has trivial center   and $\Psf_\theta$ contains no simple factors of $\Gsf$, see \cite[Section 2.4]{CZZ2024}. Then, using Theorem~\ref{thm:transverse have proj models}, we can fix a projectively visible model $(\Omega, \Gamma_0, \rho, \xi)$ of $\Gamma$. 

Fix $\phi_0 \in \partial \Qc_{\theta}(\Gamma)$ such that $\phi_0$ is positive on $\mathcal{L}_{\theta}(\Gamma) \smallsetminus \{0\}$ and has a critical gap at infinity for $\Gamma$. By Theorem~\ref{thm:critical gap is stable}, there exists an open neighborhood $\Uc$ of $\phi_0$ in $\mfa_{\theta}^*$ such that every $\psi \in \Uc$ is positive on $\mathcal{L}_{\theta}(\Gamma) \smallsetminus \{0\}$  and has a critical gap at infinity for $\Gamma$.

As in Section~\ref{sec:smooth points}, we can view  $\partial \Qc_{\theta}(\Gamma)$ as the graph of a convex function $\Phi : \mfa_{\theta_0}^* \to \Rb$.  Next fix an open set $\Dc \subset \mfa_{\theta_0}^*$ such that $\{ u + \Phi(u)\psi_0 : u \in \Dc\}$ is an open neighborhood of $\phi_0$ in $\partial \Qc_\theta(\Gamma)$ and is contained in $\Uc$. 

For $\phi \in \mfa_\theta^*$, let $\hat f_\phi : \Gamma_0 \backslash \Gc \rightarrow \Rb$ be the function in Section~\ref{sec:constructing a function}. By Proposition~\ref{prop:derivative}, Lemma \ref{obs:positive integral}, and Corollary~\ref{cor:limit of ratios}, if $u \in \Dc$,  $\Phi$ is differentiable at $u$, and $\phi := u + \Phi(u)\psi_0$, then 
$$
\left. \frac{d}{dt} \right|_{t=0} \Phi( u +  t \psi) = - \frac{\int \hat f_{\psi} dm_\phi}{\int \hat f_{\psi_0} dm_\phi}
$$
for all $\psi \in \mfa_{\theta_0}^*$. Theorem~\ref{thm:critical gap is stable} implies that the right-hand side depends continuously on $u$. Since $\Phi$ is convex, its derivative exists almost everywhere and agrees there with a continuous function. A standard convexity argument therefore implies that $\Phi$ is differentiable everywhere on $\Dc$.

It remains to prove the strict-convexity assertion when $\Gamma$ is Zariski dense. Choose a convex neighborhood $\mathcal V$ of $\phi_0$ whose closure is contained in $\Uc$, and restrict the boundary patch to $\mathcal V$. Suppose $\phi_1 \neq \phi_2$ lie in this patch, let $t \in (0,1)$, and set $\phi_t := t\phi_1+(1-t)\phi_2$. Since $\phi_t \in \Uc$, Theorem~\ref{thm:Wen's result} implies that $(\Gamma,\phi_t)$ is of divergence type. Hence \cite[Corollary 13.2]{CZZ2024} gives $\delta^{\phi_t}(\Gamma)<1$. The critical exponent is continuous on $\Uc$, so $\phi_t$ lies in the interior of $\Qc_\theta(\Gamma)$. Thus every nontrivial open segment between two points of the boundary patch lies in the interior, proving local strict convexity. 
\qed

\appendix 

\section{GPS systems and Wen's finiteness result}\label{sec:appendix on Wen}

In this appendix, we recall the definition of a GPS system, state Wen's finiteness result for BMS measures \cite{Wen2026}, and then explain why Wen's result applies in our context. 

\subsection{GPS systems and expanding cocycles}

In \cite{BCZZ_coarse}, Blayac, Canary, Zhu, and the third named author introduced the notion of a Gromov--Patterson--Sullivan (GPS) system, consisting of a convergence group action, two coarse cocycles, and an associated Gromov product. We briefly recall the special case of continuous GPS systems.

Suppose that $M$ is a compact metrizable space and $\Gamma < \mathsf{Homeo}(M)$ is a non-elementary convergence group.
Recall that a continuous cocycle $\sigma : \Gamma \times M \to \Rb$ is a continuous function satisfying 
$$
\sigma(\gamma_1 \gamma_2 , x) = \sigma(\gamma_1, \gamma_2 x) + \sigma(\gamma_2, x) \quad \text{for all } \gamma_1, \gamma_2 \in \Gamma \text{ and } x \in M.
$$

Let $M^{(2)} := \{ (x, y) \in M^2 : x \neq y \}$. 
Denoting by $\gamma^+, \gamma^- \in M$ the attracting and repelling fixed points, respectively, of a loxodromic $\gamma \in \Gamma$, a continuous cocycle $\sigma$ is called \emph{proper} if for any sequence $\{\gamma_n \} \subset \Gamma$ of distinct loxodromic elements such that $\{(\gamma_n^+, \gamma_n^-)\}$ is precompact in $M^{(2)}$, we have
$$
\sigma(\gamma_n, \gamma_n^+) \to + \infty \quad \text{as } n \to  \infty.
$$
We now recall the definition of continuous GPS systems from \cite{BCZZ_coarse}.

\begin{definition}
  A \emph{continuous GPS system} consists of a non-elementary convergence group $\Gamma < \mathsf{Homeo}(M)$, continuous proper cocycles $\sigma, \bar \sigma : \Gamma \times M \to \Rb$, and a continuous function $G : M^{(2)} \to \Rb$ such that 
  $$
  G(\gamma x, \gamma y) = G(x, y) + \bar\sigma(\gamma, x) + \sigma(\gamma, y) \quad \text{for all }\gamma \in \Gamma \text{ and } (x, y) \in M^{(2)}.
  $$
  The function $G$ is called a \emph{Gromov product}.
\end{definition}

Gerasimov \cite[Proposition 8.3.1]{Gerasimov_floyd} showed that $M$ can be used to compactify $\Gamma$. In particular, $\Gamma \sqcup M$ has a unique metrizable compact topology such that $\Gamma, M \hookrightarrow \Gamma \sqcup M$ are embeddings and the induced $\Gamma$-action on $\Gamma \sqcup M$ is a convergence action (see also \cite[Proposition 2.4]{BCZZ_coarse}). A metric on $\Gamma \sqcup M$ generating this topology is called a \emph{compatible metric}. 

Given a cocycle $\sigma : \Gamma \times M \to \Rb$, a function $\norm{\cdot}_{\sigma} : \Gamma \to \Rb$ is called a \emph{$\sigma$-magnitude} if for any $\epsilon > 0$, there exists $C > 0$ such that for any $\gamma \in \Gamma$ and $x \in M$ with $\dist(\gamma^{-1}, x) > \epsilon$, we have
$$
\norm{\gamma}_{\sigma} - C \le \sigma(\gamma, x) \le \norm{\gamma}_{\sigma} + C.
$$
When a $\sigma$-magnitude exists for a proper cocycle $\sigma$, we call $\sigma$ \emph{expanding}. The cocycles in a GPS system are expanding \cite[Proposition 3.4]{BCZZ_coarse}.

Given an expanding cocycle $\sigma$ on $M$ and $\delta \ge 0$, a probability measure $\mu$ on $M$ is a \emph{$\delta$-dimensional Patterson--Sullivan measure} for $\sigma$ if, for every $\gamma \in \Gamma$, the measure $\gamma_*\mu$ is absolutely continuous with respect to $\mu$ and 
$$
\frac{d \gamma_* \mu}{d\mu}(x) = e^{-\delta \sigma(\gamma^{-1}, x)} \quad \mu\text{-a.e.}
$$
The \emph{critical exponent} associated to $\sigma$ is given by 
$$
\delta^\sigma(\Gamma) := \inf \left\{ s > 0 : \sum_{\gamma \in \Gamma} e^{-s \norm{\gamma}_\sigma} < +\infty\right\} \in [0,+\infty]
$$
and $\sigma$ is of \emph{divergent type} if 
$$
 \sum_{\gamma \in \Gamma} e^{-\delta^\sigma(\Gamma) \norm{\gamma}_\sigma} = +\infty.
$$
We will mainly consider a Patterson--Sullivan measure whose dimension is equal to the critical exponent of the cocycle. 

One can also define an \emph{entropy at infinity} $\delta_\infty^\sigma(\Gamma)$ as in Section~\ref{subsec:critical gap} by replacing the exponent $\phi(\kappa(\gamma))$ with $\norm{\gamma}_\sigma$.

\subsection{BMS measures and Wen's finiteness result} 

Given an expanding cocycle $\sigma'$ on $M$, we can define a $\Gamma$-action on $M^{(2)} \times \Rb$ by 
$$
\gamma \cdot (x,y, s) = (\gamma x, \gamma y, s +\sigma'(\gamma, y)).
$$
This action is proper \cite[Proposition 10.2]{BCZZ_coarse}, and this space has a natural flow defined by 
$$
\psi^t( x,y,s) = (x,y,s+t)
$$
which descends to a flow on the quotient $\Gamma \backslash (M^{(2)} \times \Rb)$, also denoted by $\psi^t$. 

Now suppose $(\sigma, \bar \sigma, G)$ is a GPS system such that  $\delta^{\sigma}(\Gamma) = \delta^{\bar \sigma}(\Gamma) =: \delta$. Suppose also that 
$\mu, \bar\mu$ are Patterson--Sullivan measures of dimension $\delta$ for $\sigma$ and $\bar \sigma$, respectively.  Notice that the cocycle $\sigma'$ defining the action of $\Gamma$ on $M^{(2)} \times \Rb$ may differ from the cocycles in the GPS system.

The Radon measure $\tilde m_{\sigma}$ on $M^{(2)} \times \Rb$ defined by
$$
d \tilde m_{\sigma}(x,y,t) := e^{ \delta G(x,y)} d \bar\mu(x) d \mu(y) dt
$$
is invariant under the $\Gamma$-action and the flow $\{\psi^t\}$. Since $\Gamma$ acts properly on $M^{(2)} \times \Rb$, this measure descends to a flow-invariant measure $m_{\sigma}$ on the quotient $\Gamma \backslash (M^{(2)} \times \Rb)$ called a \emph{$(\sigma, \bar \sigma, G)$-BMS measure}.

  \begin{theorem}[{\cite[Theorems 4.8 and 4.9]{Wen2026}}] \label{thm:Wen's result 2} With the notation above, suppose in addition that: 
  \begin{itemize}
  \item $\delta_\infty^\sigma(\Gamma) < \delta^\sigma(\Gamma)  < +\infty$, 
  \item For every $\epsilon > 0$ there exists $C_\epsilon > 1$ such that   for all $\gamma \in \Gamma$,
  $$
  \norm{\gamma}_{\sigma'} \leq C_\epsilon e^{\epsilon \norm{\gamma}_\sigma}.
  $$

  \end{itemize} 
  Then $\sigma, \bar \sigma$ are of divergent type and the $(\sigma, \bar \sigma, G)$-BMS measure on $\Gamma \backslash (M^{(2)} \times \Rb)$ is finite. 
  \end{theorem}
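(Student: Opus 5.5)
We would prove the two conclusions in the order they appear: first divergence type, then finiteness of the BMS measure. The finiteness argument is the same excursion decomposition already used in the proof of Lemma~\ref{after}. Throughout we work in the abstract setting: $\Gamma$ acts on $M^{(2)}\times\Rb$ via $\sigma'$, $K_0$ and the sets $\Gamma_K$ are defined as in Section~\ref{subsec:critical gap} with $\phi(\kappa(\gamma))$ replaced by $\norm{\gamma}_\sigma$, and we use the shadow lemma and uniform multiplicity for GPS systems from \cite{BCZZ_coarse}. These are the abstract analogues of Proposition~\ref{prop:Shadow Lemma} and Lemma~\ref{lem:uniform multiplicity}. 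Fix a compact $K\supset K_0$ with $\delta^\sigma(K)<\delta^\sigma(\Gamma)$; this is possible by the gap hypothesis and the monotonicity in Lemma~\ref{lem:monotone of entropy of K}. Then pick $\epsilon>0$ with $\delta^\sigma(K)+\epsilon<\delta:=\delta^\sigma(\Gamma)$.

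\emph{Divergence type.} By the ergodic dichotomy (the analogue of Theorem~\ref{thm:consequence of erg dich} for GPS systems), it suffices to show that the $\delta$-dimensional Patterson--Sullivan measure $\mu$ gives full mass to the conical limit set. Let $y$ be non-conical. Then a flow line ending at $y$ returns to $\Gamma K$ only finitely often. Let $\eta K$ be the last visit. Translating back by $\eta$, the remainder of the flow line is an excursion from $K$ that never returns. Truncating this excursion at arbitrarily large times $T$ and reading off the first return time, we get that $y$ lies in the shadow of $\eta\gamma$ for elements $\gamma$ which are ``initial pieces'' of excursions, with $\norm{\gamma}_\sigma$ arbitrarily large. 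These initial pieces are controlled, up to the bounded-error concatenation property, by $\Gamma_{K'}$ for a slightly larger compact $K'$.

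Quasi-multiplicativity of shadows (the shadow lemma together with the cocycle identity) gives
\[
\mu\bigl(\widehat{\mathcal O}(\eta\gamma)\bigr)\ll e^{-\delta\norm{\eta}_\sigma}\,e^{-\delta\norm{\gamma}_\sigma}.
\]
Hence the non-conical set is contained, for each $N$, in a set of measure
\[
\ll \sum_{\eta}\mu\bigl(\widehat{\mathcal O}(\eta)\bigr)\sum_{\gamma\in\Gamma_{K'},\ \norm{\gamma}_\sigma\ge N} e^{-\delta\norm{\gamma}_\sigma}.
\]
The inner tail tends to $0$ as $N\to\infty$ because $\delta>\delta^\sigma(K')$. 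The outer sum must be organized by a last-visit decomposition so that it is bounded; this uses uniform multiplicity and the fact that distinct $\eta$ correspond to disjoint last visits. Thus the non-conical set is null, and divergence type follows. The same argument applies to $\bar\sigma$, since its critical exponent equals $\delta$ and the Gromov product relates the two magnitudes up to bounded error.

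\emph{Finiteness.} Now $\mu$ and $\bar\mu$ are the unique Patterson--Sullivan measures and the flow is conservative. Every flow line visits $\Gamma K$ at unbounded positive and negative times, so, as in Lemma~\ref{after}, the quotient is covered up to a null set by the images of the sets $A_\gamma$ with $\gamma\in\Gamma_K$. As in the Claim of that proof, $A_\gamma$ is contained in $M\times\mathcal O(\gamma)\times[-C,\,C+\norm{\gamma}_{\sigma'}]$. The Gromov product is bounded on the relevant compact set, so the shadow lemma gives
\[
\tilde m_\sigma(A_\gamma)\ll(\norm{\gamma}_{\sigma'}+C)\,e^{-\delta\norm{\gamma}_\sigma}\ll C_\epsilon\, e^{-(\delta-\epsilon)\norm{\gamma}_\sigma},
\]
using the second hypothesis. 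Summing over $\gamma\in\Gamma_K$ converges because $\delta-\epsilon>\delta^\sigma(K)$. Hence $\norm{m_\sigma}<\infty$.

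The main obstacle is the divergence step. Specifically, the bookkeeping that turns ``the flow line eventually escapes $\Gamma K$'' into a covering by concatenated shadows $\widehat{\mathcal O}(\eta\gamma)$ with $\gamma$ in the excursion set, with summable multiplicity, requires care in the purely convergence-group setting. We would first try to mimic \cite[Section 5.3]{PS_finiteness}, using compatible-metric estimates on shadows from \cite{BCZZ_coarse} in place of negatively curved geometry.
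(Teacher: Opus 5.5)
Your finiteness half is sound. It is the same excursion computation the paper carries out in Lemma~\ref{after}; the paper does not reprove this statement and simply quotes Wen. There, the second hypothesis absorbs the length factor $\norm{\gamma}_{\sigma'}$. One caveat: ``every flow line visits $\Gamma K$'' holds only for almost every flow line, and it needs conservativity and ergodicity, hence divergence.

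The genuine gap is in the divergence half, at the sentence ``$y$ lies in the shadow of $\eta\gamma$ for elements $\gamma$ \dots\ with $\norm{\gamma}_\sigma$ arbitrarily large''. This is false for every non-conical $y$. With $\eta$ fixed, the elements $\eta\gamma$ are distinct, and a point lying in infinitely many shadows of a fixed size is conical (Proposition~\ref{prop:char of conical} and its GPS analogue). Geometrically, after the last visit the ray toward $y$ eventually avoids $\Gamma K'$ for every compact $K'$, so there are no group elements along the escaping tail whose shadows could contain $y$. What your truncation actually produces are nearby points $y_T\to y$ whose rays do return after time $T$. These lie in shadows $\widehat{\mathcal O}(\eta\gamma)$ with $\gamma\in\Gamma_K$ and $\norm{\gamma}_\sigma\ge T$, so $y$ lies only in the \emph{closure} of that union. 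The mass a Patterson--Sullivan measure gives to this closure is not bounded by the sum of the shadow masses. For example, in convergent-type geometrically finite examples the measure has atoms at parabolic points, and these points are accumulated by shadows $\widehat{\mathcal O}(p\gamma_0)$, with $p$ in the stabilizer, of arbitrarily small total mass. So this is not bookkeeping: shadow estimates for the given $\mu$ do not see the escaping set at all.

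The standard remedy (Dal'bo--Otal--Peign\'e for parabolic gaps, Pit--Schapira, Wen) is to feed the gap in through the \emph{construction} of one particular measure.
\begin{itemize}
\item Take Patterson's measure $\mu=\lim_{s\searrow\delta}\mu_s$ on $\Gamma\sqcup M$. Here $\mu_s$ is the normalization of $\sum_{\gamma}h(\norm{\gamma}_\sigma)e^{-s\norm{\gamma}_\sigma}\mathcal D_\gamma$, with $h$ slowly increasing and $\mathcal D_\gamma$ the Dirac mass at $\gamma$.
\item Let $E$ be the set of forward endpoints of flow lines through $K$ that never return to $\Gamma K$ after leaving $K$. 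For each $T$, some open neighbourhood of $E$ in $\Gamma\sqcup M$ meets $\Gamma$, up to finitely many elements, only in elements whose path begins with an excursion of length at least $T$.
\item Such elements factor as $\gamma_1\gamma_2$ with $\gamma_1\in\Gamma_K$, $\norm{\gamma_1}_\sigma\ge T$, and $\norm{\gamma_1\gamma_2}_\sigma=\norm{\gamma_1}_\sigma+\norm{\gamma_2}_\sigma+O(1)$. Using $h(a+b)\le C_\epsilon e^{\epsilon a}h(b)$, the $\mu_s$-mass of this neighbourhood is, uniformly in $s$,
\[
\ll\sum_{\gamma_1\in\Gamma_K,\ \norm{\gamma_1}_\sigma\ge T}e^{-(\delta-\epsilon)\norm{\gamma_1}_\sigma},
\]
which tends to $0$ as $T\to\infty$ by the gap. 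Hence $\mu(E)=0$.
\item Apply this to each last-visit translate $\eta E$ separately and use quasi-invariance to conclude that $\mu$ is carried by the conical set. You never need to bound $\sum_\eta\mu(\widehat{\mathcal O}(\eta))$; that sum is comparable to the Poincar\'e series at $\delta$, which is exactly the quantity in question.
\item The Hopf--Tsuji--Sullivan dichotomy then gives divergence of $\sigma$, and of $\bar\sigma$ via $\norm{\gamma}_{\bar\sigma}=\norm{\gamma^{-1}}_\sigma+O(1)$.
\end{itemize}
After that, your finiteness argument goes through as written.
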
 

\subsection{Proof of Theorem~\ref{thm:Wen's result}} We now explain why Theorem~\ref{thm:Wen's result} is a consequence of Theorem~\ref{thm:Wen's result 2}. 

By \cite[Proposition 10.3]{BCZZ2024}, the cocycle
$$
\sigma(\gamma, x) := \phi \circ B_\theta( \rho(\gamma),\xi(x))
$$
on $\Lambda_\Omega(\Gamma_0)$ is part of a continuous GPS system with magnitude $\norm{\gamma}_\sigma := \phi(\kappa(\rho(\gamma)))$. 

Since $\Gamma_0$ is projectively visible, the action of $\Gamma_0$ on $\Lambda_\Omega(\Gamma_0)$ is a convergence group action; see \cite[Proposition 3.15]{CZZ2026}.
 By \cite[Lemma 3.2]{Bray_ergodicity}, 
$$
\sigma'(\gamma, x) := \beta_{x}(\gamma^{-1} o, o)
$$
defines a continuous cocycle for the action of $\Gamma_0$ on $\Lambda_\Omega(\Gamma_0)$. 

Theorem~\ref{thm:Wen's result} is a consequence of Theorem~\ref{thm:Wen's result 2} and the next two lemmas. 

\begin{lemma} $\sigma'$ is an expanding cocycle, and  $\norm{\gamma}_{\sigma'} := \dist_{\Omega}(o, \gamma o)$, $\gamma \in \Gamma_0$, is a $\sigma'$-magnitude. \end{lemma}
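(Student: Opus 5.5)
We need to show that $\sigma'(\gamma,x)=\beta_x(\gamma^{-1}o,o)$ satisfies: for every $\epsilon>0$ there is $C>0$ such that whenever $\dist(\gamma^{-1},x)>\epsilon$ in a compatible metric on $\Gamma_0\sqcup\Lambda_\Omega(\Gamma_0)$, we have $\abs{\sigma'(\gamma,x)-\dist_\Omega(o,\gamma o)}\le C$. Properness of $\sigma'$ is then automatic once a magnitude exists: for loxodromic $\gamma$ one has $\sigma'(\gamma,\gamma^+)$ equal to the Hilbert translation length, which tends to infinity along distinct elements with precompact fixed-point pairs, by discreteness. Alternatively, properness follows by applying the magnitude estimate to suitable powers.

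\textbf{Step 1: geometric meaning of the condition.} I will identify the topology on $\Gamma_0\sqcup\Lambda_\Omega(\Gamma_0)$ with the one induced by $\gamma\mapsto\gamma o\in\overline\Omega$. For projectively visible groups, $\gamma_n\to x$ in the Gerasimov compactification if and only if $\gamma_n o\to x$ in $\overline\Omega$; this follows from uniqueness of the compactification together with \cite{CZZ2026}. Hence the condition $\dist(\gamma^{-1},x)>\epsilon$ says that $\gamma^{-1}o$ stays a definite distance from $x$.

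\textbf{Step 2: the Busemann estimate.} We have $\beta_x(\gamma^{-1}o,o)=\lim_{p\to x}\bigl(\dist_\Omega(p,\gamma^{-1}o)-\dist_\Omega(p,o)\bigr)$. Take $p$ on the ray $[\gamma^{-1}o,x)$. Then
\[
\beta_x(\gamma^{-1}o,o)=\dist_\Omega(\gamma^{-1}o,o)-2\,(\text{Gromov product error}),
\]
and it suffices to show that the projective segment/ray from $\gamma^{-1}o$ to $x$ passes within a uniform distance $D_\epsilon$ of $o$. Granting this, the triangle inequality gives
\[
\abs{\beta_x(\gamma^{-1}o,o)-\dist_\Omega(o,\gamma^{-1}o)}\le 2D_\epsilon,
\]
and $\dist_\Omega(o,\gamma^{-1}o)=\dist_\Omega(o,\gamma o)$.

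\textbf{Step 3 (main obstacle): uniform closeness.} I will argue by contradiction. Suppose there are $\gamma_n$ and $x_n$ with $\dist(\gamma_n^{-1},x_n)>\epsilon$ and $\dist_\Omega(o,[\gamma_n^{-1}o,x_n))\to\infty$. Only finitely many $\gamma_n$ lie in any bounded set, so after passing to a subsequence $\gamma_n^{-1}o\to y\in\Lambda_\Omega(\Gamma_0)$ and $x_n\to x$, with $x\ne y$ by the $\epsilon$-separation. Visibility gives $(x,y)\subset\Omega$, and a standard limiting argument for Hilbert geometry shows that the segments $[\gamma_n^{-1}o,x_n)$ converge to $(y,x)$ and hence eventually meet a fixed compact neighborhood of a point of $(y,x)$. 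This is a contradiction.

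The delicate point is this convergence of segments. Limits of segments could a priori collapse into $\partial\Omega$, but such a collapse would force the limiting segment $[y,x]\subset\partial\Omega$, which contradicts $(x,y)\subset\Omega$. I expect this to follow as in the proof of the analogous statements in \cite{CZZ2026}.
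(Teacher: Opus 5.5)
Your proposal is correct and follows essentially the paper's argument: for the magnitude, you pass to a subsequence with $\gamma_n^{-1}o\to y\neq x\leftarrow x_n$, use visibility $(x,y)\subset\Omega$ to keep $[\gamma_n^{-1}o,x_n)$ uniformly close to $o$, and get the Busemann estimate by the triangle inequality; properness goes through the invariant axis $(\gamma_n^-,\gamma_n^+)$. The one compressed step is your ``by discreteness'' in the properness argument. As the paper spells out, it needs that precompactness of $(\gamma_n^+,\gamma_n^-)$ in $\Lambda_\Omega^{(2)}(\Gamma_0)$, together with visibility, puts these axes within a uniform distance $R$ of $o$; then bounded displacement along the axis would bound $\dist_\Omega(o,\gamma_n o)$, contradicting discreteness.
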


\begin{proof}   We first show that $\sigma'$ is proper. Suppose not. Then there exists a sequence $\{\gamma_n\} \subset \Gamma_0$ of distinct loxodromic elements (in the convergence group sense) such that two sequences $\{ \gamma_n^+\}$ and $\{ \gamma_n^-\}$ converge to two distinct points in $\Lambda_{\Omega}(\Gamma_0)$ as $n \to \infty$ while we have $\sup_{n \in \Nb} \beta_{\gamma_n^+}(o, \gamma_n o) < + \infty$.

  On the other hand, there exists $R > 0$ such that $\dist_{\Omega}(o, (\gamma_n^+, \gamma_n^-)) < R$ for all $n \in \Nb$. Since each $(\gamma_n^+, \gamma_n^-)$ is invariant under $\gamma_n$, we also have that $\dist_{\Omega}(\gamma_n o, (\gamma_n^+, \gamma_n^-)) < R$ for all $n \in \Nb$. This implies
  $$
  \abs{\beta_{\gamma_n^+}(o, \gamma_n o) - \dist_{\Omega}(o, \gamma_n o)} < 2R \quad \text{for all } n \in \Nb.
  $$
  Hence, the sequence $\{\gamma_n o \} \subset \Omega$ is bounded, which is a contradiction. This shows the properness of $\sigma'$.

Next we show that $\norm{\gamma}_{\sigma'} := \dist_{\Omega}(o, \gamma o)$ defines a $\sigma'$-magnitude. Fix a compatible metric $\dist$ on $\Gamma_0 \sqcup \Lambda_{\Omega}(\Gamma_0)$. Suppose, to the contrary, that there exist $\epsilon > 0$ and sequences $\{ \gamma_n \} \subset \Gamma_0$ and $\{x_n \} \subset \Lambda_{\Omega}(\Gamma_0)$ with $\dist(\gamma_n^{-1}, x_n) > \epsilon$ such that $\abs{ \beta_{x_n}(\gamma_n^{-1} o, o) - \dist_{\Omega}(o, \gamma_n o)} > n$ for all $n \in \Nb$.

  After passing to a subsequence, we may assume that $x_n \to x \in \Lambda_{\Omega}(\Gamma_0)$ and $\gamma_n^{-1} o \to y \in \Lambda_{\Omega}(\Gamma_0)$ as $n \to \infty$. Then $x \neq y$ by hypothesis. Since $\Gamma_0$ is projectively visible, $(x, y) \subset \Omega$, and hence $\sup_{n \in \Nb} \dist_{\Omega}(o, [\gamma_n^{-1} o, x_n]) < + \infty$.  This implies that $\sup_{n \in \Nb} \abs{\beta_{x_n} (\gamma_n^{-1} o, o) - \dist_{\Omega}(\gamma_n^{-1} o, o)} < + \infty$, which is a contradiction. This finishes the proof.
\end{proof} 

\begin{lemma} For every $\epsilon > 0$ there exists $C_\epsilon > 1$ such that   for all $\gamma \in \Gamma_0$,
  $$
  \norm{\gamma}_{\sigma'} \leq C_\epsilon e^{\epsilon \norm{\gamma}_\sigma}.
  $$
\end{lemma}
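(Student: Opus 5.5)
The plan is to compare both magnitudes to the norm of the partial Cartan projection. By the preceding lemma, $\norm{\gamma}_{\sigma'} = \dist_\Omega(o,\gamma o)$, and by construction $\norm{\gamma}_\sigma = \phi(\kappa(\rho(\gamma)))$. The claimed bound is much weaker than what we will show: we will prove that $\norm{\gamma}_{\sigma'}$ is bounded above by an affine function of $\norm{\gamma}_\sigma$. A polynomial bound is always absorbed by $C_\epsilon e^{\epsilon\norm{\gamma}_\sigma}$.

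\textbf{Step 1: comparison with the Cartan norm.} The ``moreover'' part of Theorem~\ref{thm:transverse have proj models} gives
$$
\dist_\Omega(o,\gamma o)\leq C_0\norm{\kappa_\theta(\rho(\gamma))}+C_0^2 .
$$

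\textbf{Step 2: comparison with $\phi$.} The hypothesis of Theorem~\ref{thm:Wen's result} is that $\phi$ is positive on $\mathcal L_\theta(\Gamma)\smallsetminus\{0\}$. We argue exactly as in the proof of Lemma~\ref{after}. Since $\mathcal L_\theta(\Gamma)$ is the asymptotic cone of $\kappa_\theta(\Gamma)$, compactness of $\mathbb S\cap\mathcal L_\theta(\Gamma)$ gives $c_1>0$ with
$$
\phi(\kappa_\theta(\rho(\gamma)))\geq c_1^{-1}\norm{\kappa_\theta(\rho(\gamma))}-c_1
$$
for all $\gamma$. To see this, suppose it fails along a sequence. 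Normalizing $\kappa_\theta(\rho(\gamma_n))$ and passing to a subsequence yields a unit vector $u$ in the limit cone with $\phi(u)\le 0$, which is a contradiction. Here we use that $\norm{\kappa_\theta(\rho(\gamma_n))}\to\infty$, which follows from $\theta$-discreteness; any bounded portion is finite and contributes only a constant.

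\textbf{Step 3: conclusion.} Combining the two steps gives
$$
\norm{\gamma}_{\sigma'}\le a\norm{\gamma}_\sigma + b
$$
for some constants $a,b>0$. The set $\{\norm{\gamma}_\sigma\}$ is bounded below, by Step 2. For any $\epsilon>0$, the function $t\mapsto (at+b)e^{-\epsilon t}$ is bounded on $[\inf_\gamma\norm{\gamma}_\sigma,\infty)$. This produces $C_\epsilon$, which we may take greater than $1$.

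\textbf{Main obstacle.} The only step needing any care is Step 2, the uniform linear lower bound of $\phi\circ\kappa_\theta$ in terms of the norm. It is a standard limit-cone compactness argument, already used verbatim in Lemma~\ref{after}, so we expect no real difficulty.
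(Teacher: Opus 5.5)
Your proof is correct and follows the paper's argument. The paper likewise combines the linear lower bound $\phi(\kappa_\theta(\rho(\gamma)))\geq c_1^{-1}\norm{\kappa_\theta(\rho(\gamma))}-c_1$, which comes from positivity of $\phi$ on the limit cone, with the ``moreover'' part of Theorem~\ref{thm:transverse have proj models}; the resulting affine bound is absorbed by the exponential, and you merely spell out the compactness argument that the paper leaves implicit.
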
 
  
  \begin{proof} Since $\phi$ is positive on $\mathcal{L}_{\theta}(\Gamma) \smallsetminus \{0\}$,  there exists $c_1 > 0$ such that 
  $$
  \norm{\gamma}_\sigma =\phi(\kappa_\theta(\rho(\gamma))) \geq c_1^{-1} \norm{\kappa_\theta(\rho(\gamma))}-c_1 
  $$
 for all $\gamma \in \Gamma_0$.  Then the ``moreover'' part of Theorem~\ref{thm:transverse have proj models} implies the desired estimate.  
  \end{proof}

\bibliographystyle{alpha}
\bibliography{geom}

\end{document}